\documentclass[12pt]{article}
\usepackage{graphics,amsmath,amssymb,amsthm,mathrsfs,indentfirst,bm,color}

\newcommand{\lap}[1]{\Delta #1}

\setlength{\paperwidth}{8.5in}
\setlength{\paperheight}{11.0in}
\setlength{\textwidth}{6.5in}
\setlength{\textheight}{9.0in}
\setlength{\oddsidemargin}{0in}
\setlength{\evensidemargin}{0in}
\setlength{\topmargin}{0in}
\setlength{\headsep}{0.0in}
\setlength{\headheight}{0.0in}
\setlength{\marginparwidth}{0in}
\setlength{\marginparsep}{0in}

\newtheorem{thm}{Theorem}[section]
\newtheorem{lemma}[thm]{Lemma}
\newtheorem{cor}[thm]{Corollary}
\newtheorem{remark}[thm]{Remark}

\numberwithin{equation}{section}
\allowdisplaybreaks
\bibliographystyle{amsplain}

\begin{document}

\title{Semiclassical states of a linearly coupled critical  fractional Schr\"{o}dinger system }

\author{Shijie Qi$^{1,2}$\footnote{
 Corresponding author. E-mail: qishj15@lzu.edu.cn (S. Qi); zhaoph@lzu.edu.cn (P. Zhao).
 } \quad Peihao Zhao$^1$  \\ \small{$^1$School of Mathematics and Statistics, Lanzhou University, Lanzhou 730000, PR China}\\
 \small{$^2$Department of Mathematical Sciences,
Yeshiva University,
New York NY 10033, USA}}

\date{ }
\maketitle

\begin{abstract}
This paper focuses on the  linearly coupled   critical fractional Schr\"{o}dinger system
\begin{equation*}
\begin{cases}
\epsilon^{2s}(-\triangle)^s u +a(x)u=u^p+\lambda v\quad &\text{in}\ \mathbb{R}^N,\\
\epsilon^{2s}(-\triangle)^s v +b(x)v=v^{2_s^*-1}+\lambda u\quad &\text{in}\ \mathbb{R}^N,
\end{cases}
\end{equation*}
where $N>2s,$ $s\in(0,1),$ $p\in(1,2_s^*),$  $\epsilon$ and $\lambda$ are positive parameters, $a,b\in C{(\mathbb{R}^N)}$ are positive potentials, and  $(-\triangle)^s$ is the fractional Laplacian operator. Under certain assumptions on $a$ and $\lambda,$ we obtain the existence, decay estimates and concentration property of  positive vector ground states  for small $\epsilon.$
Furthermore, under an additional assumption on potentials $a$ and $b$, we consider the multiplicity  of positive vector solutions for small $\epsilon$, which turn out to have similar decay estimate and concentration property to those of the ground state for small $\epsilon$.
\vspace{0.3cm}\\
\small{\textbf{\emph{Keywords:}}\ Fractional Schr\"{o}dinger system; critical nonlinearities; positive vector solutions;
concentration property; decay estimates. }
\end{abstract}

\section{Introduction and main results}

In this paper, we consider the existence, multiplicity and concentration  property  of positive vector solutions for  the following linearly coupled fractional Schr\"{o}dinger system
\begin{equation}\label{op}
\begin{cases}
\epsilon^{2s}(-\triangle)^s u +a(x)u=u^p+\lambda v\quad &\text{in}\ \mathbb{R}^N,\\
\epsilon^{2s}(-\triangle)^s v +b(x)v=v^{2_s^*-1}+\lambda u\quad &\text{in}\ \mathbb{R}^N,
\end{cases}
\end{equation}
where $N>2s,$ $s\in(0,1),$ $p\in(1,2_s^*),$ $2_s^*=2N/(N-2s),$  $\epsilon,\ \lambda>0$ are parameters, $(-\triangle)^s$ is the fractional Laplacian operator,   and $a,b\in C{(\mathbb{R}^N)}$ are positive potentials.  System \eqref{op} arises in the study of   time-dependent nonlinear Schr\"{o}dinger system
\begin{equation}\label{49}
\begin{cases}
i\epsilon \frac{\partial\psi}{\partial t}=\epsilon^{2s}(-\triangle)^s \psi +P(x)\psi-|\psi|^{p-1}\psi-\lambda \phi\quad
 &\text{in}\ \mathbb R\times\mathbb{R}^N,\\
 i\epsilon \frac{\partial\phi}{\partial t}=\epsilon^{2s}(-\triangle)^s \phi +Q(x)\phi-|\phi|^{2_s^*-2}\phi-\lambda \psi\quad
 &\text{in}\ \mathbb R\times\mathbb{R}^N.
\end{cases}
\end{equation}
In fact,  if $(\psi,\phi)$ is a standing wave  of system \eqref{49}, that is, a solution with the form $(\psi(t,x),\phi(t,x))$
$=(e^{-iEt/\epsilon}u(x), e^{-iEt/\epsilon}u(x))$ satisfying  $u(x)\to 0$ and $v(x)\to 0$  as
  $|x|\to \infty$.
  Then $(u,v)$ is a solution of system
  \begin{equation*}
\begin{cases}
\epsilon^{2s}(-\triangle)^s u +(a(x)-P)u=|u|^{p-1}u+\lambda v\quad &\text{in}\ \mathbb{R}^N,\\
\epsilon^{2s}(-\triangle)^s v +(b(x)-Q)v=|v|^{2_s^*-2}v+\lambda u\quad &\text{in}\ \mathbb{R}^N.
\end{cases}
\end{equation*}
  For sufficiently small $\epsilon>0$, the standing wave
is referred to as the semiclassical state. A solution  $(u,v)$ of \eqref{op} is called a positive vector solution if
$u>0$ and $v>0$ in $\mathbb R^N.$

 The fractional Laplacian  operator
$(-\triangle)^s$   arises in many fields such as phase transitions, flame propagation, stratified materials and others, see \cite{ab,bgr,sirevaldinoci} and references therein. In particular, it can be understood as the infinitesimal generator of a stable Levy process (see \cite{valdinoci}).
There are various equivalent definitions of the fractional Laplacian operator \cite{Kwasnicki}.
In particular, if $u$ belongs to the Schwartz class, then $(-\triangle)^su$ can be defined as
$$\mathcal{F}\left((-\triangle)^su\right)(\xi)=|\xi|^{2s}\mathcal{F}(u)(\xi),\quad\xi\in\mathbb R^N,$$
where $\mathcal F$ denotes the Fourier transform. The fractional Laplacian can also be defined by the singular integral
\begin{equation}\label{50}
\begin{aligned}
(-\triangle)^{s}u (x) := C_{N,s} PV \int_{\mathbb{R}^N}\frac{u(x)-u(y)}{|x-y|^{N+2s}}dy
= C_{N,s}\lim_{\epsilon\to 0}\int_{\mathbb{R}^N\setminus B_{\epsilon}(x)}\frac{u(x)-u(y)}{|x-y|^{N+2s}}dy
\end{aligned}
\end{equation}
for any real number $0<s<1$, where PV stands for the Cauchy principal value and  $C_{N,s}$ is the normalized  constant.
If $u\in C_{\text{loc}}^{1.1}(\mathbb R^N)\cap L_{s},$ then  the integral in \eqref{50} converges
(see, e.g., \cite{CLM,CQ}), and hence $(-\lap)^{s}$ is well-defined for such $u$. Here
$$L_{s}:=\left\{u \in L_{loc}^1(\mathbb{R}^N) : \int_{\mathbb{R}^N}\frac{|u(x)|}{1+|x|^{N+2s}} \, d x <\infty\right\}.$$
Alternatively, it can be expressed without using the Cauchy principal value as
$$(-\lap)^{s} u(x):= \frac{C_{N, s}}{2}
 \int_{\mathbb{R}^N}
 \frac{2u(x)-u(x+y)-u(x-y)}{|y|^{N+2s}} d y.$$

The nonlinear Schr\"{o}dinger equations and systems have attracted a great deal of attentions. In particular,
there has been a great interest in the study of standing waves. For the local cases (s=1), there
are many significant references, we refer to \cite{Berestycki-Lions, Byeon-Jeanjean,Byeon-Wang2002,Byeon-Wang2003,Coffman,DelPino-Felmer1996,DelPino-Felmer1997,Montefusco-Pellacci-Squassina,Pomponio,
Strauss} and references therein.

In recent years,  an ever-growing  interest has been devoted to consider Schr\"{o}dinger equations and systems involving in the nonlocal operator. However, the study of these problems becomes much more complicated since the nonlocal character of operators causes some essential difficulties. For the fractional Schr\"{o}dinger equation
\begin{equation}\label{51}
\epsilon^2(-\triangle )^s u+V(x)u=f(u)\quad \text{in\ } \mathbb R^N,
\end{equation}
Fall,  Mahmoudi and  Valdinoci \cite{Fall-Mahmoudi-Valdinoci} showed that the concentration points must be critical points of  $V$ under some suitable assumptions. Moreover, if the potential $V$ is coercive and has a unique global minimum, then ground states have the concentration property  as $\epsilon$ tends to zero. In addition, if the potential $V$ is radial, then the minimizer is unique for small $\epsilon$.
D\'{a}vila, del Pino and  Wei \cite{Davila-DelPino-Wei}   proved the existence of positive
solutions which have multiple spikes near given topologically nontrivial critical points of $V$ or
cluster near a given local maximum point of $V$ by  the Lyapunov-Schmidt reduction method.
Alves and Miyagaki \cite{Alves-Miyagaki} studied  the existence and concentration property  of positive  solutions  via penalization method developed in \cite{DelPino-Felmer1996} for $f$ with subcritical growth. He and Zou \cite{he-zou} considered the existence, multiplicity and  concentration property of  positive  solutions of \eqref{51} with critical nonlinearities. For more results concerning the existence and concentration property for fractional Schr\"{o}dinger equations, we refer to \cite{Barrios- Colorado-Servadei-Soria,Barrios-Colorado-Pabl-Sanchezo,Barrios-Colorado-Pablo,Jin-Li-Xiong,Shang-Zhang} and references therein.

However, only few results are known in the literature on the study of concentration property of standing waves even for the subcritical case
when the fractional Sch\"{o}dinger systems are incorporated into consideration.
Guo and He \cite{guo-he} considered the  existence and concentration property of ground states for the following weakly coupled fractional Schr\"{o}dinger system  with subcritical
nonlinearities
\begin{equation}\label{55}
\begin{cases}
\epsilon^{2s}(-\triangle)^{2s} u +P_1(x)u=(|u|^{2p}+b|u|^{p-1}|v|^{p+1})u\quad&\text{in}~\mathbb R^N,\\
\epsilon^{2s}(-\triangle)^{2s} v +P_2(x)v=(|v|^{2p}+b|v|^{p-1}|u|^{p+1})v\quad&\text{in}~\mathbb R^N,
\end{cases}
\end{equation}
where $2p+2<2_s^*,$ potentials $P_i\ (i=1,2)$ are continuous and have the positive global infimum, and there
exists a smooth bounded open set $\Lambda\subset\mathbb R^N$ such that
\begin{equation}\label{54}
\inf_{\Lambda}P_1(x)<\min_{\partial \Lambda}P_1(x)~~\text{and }\inf_{\Lambda}P_2(x)<\min_{\partial \Lambda}P_2(x).
\end{equation}
Yu,  Zhao and  Zhao \cite{Yu-Zhao-Zhao} investigated  the subcritical fractional Schr\"{o}dinger-Poisson system
 \begin{equation*}
 \begin{cases}
 \epsilon^{2s}(-\triangle)^su +V(x)u+uv=K(x)|u|^{p-2}u\quad&\text{in}~\mathbb R^3,\\
 \epsilon^{2s}(-\triangle)^sv=u^2&\text{in}~\mathbb R^3,
 \end{cases}
 \end{equation*}
where $3/4<s<1,$ $4<p<2_s^*,$ potential $V\in C(\mathbb R^3)\cap L^\infty (\mathbb R^3)$ has a positive global minimum, and $K\in C(\mathbb R^3)\cap L^\infty (\mathbb R^3)$ is positive
and has global maximum. The authors proved the existence of  positive vector  ground state by using variational
methods for each $\epsilon > 0$ sufficiently small, and  determined a concrete set related to the
potentials $V$ and $K $ as the concentration position of these ground state solutions as $\epsilon\to 0.$
For more results on the existence of solutions for fractional Schr\"{o}dinger systems, we can refer to \cite{Bhattarai,Teng,Zhang-do-Squassina} and references therein.

Inspired by the works mentioned above,  the current paper is devoted to the study of semiclassical states of the nonlocal critical Schr\"{o}dinger system \eqref{op} by using the variational methods, Ljusternik-Schnirelmann theory and penalization approach.
Clearly, if $s=1,$ then system \eqref{op} becomes the local  Schr\"{o}dinger system
\begin{equation}\label{52}
\begin{cases}
-\epsilon^{2}\triangle u +a(x)u=u^p+\lambda v\quad &\text{in}\ \mathbb{R}^N,\\
-\epsilon^{2}\triangle v +b(x)v=v^{2_s^*-1}+\lambda u\quad &\text{in}\ \mathbb{R}^N,
\end{cases}
\end{equation}
which has been investigated in \cite{Ambrosetti-Colorado- Ruiz2007,Ambrosetti-Cerami-Ruiz2008,chen-zou2012,chen-zou2014}.
In particular, the authors in \cite{chen-zou2012} considered the existence and nonexistence  of positive vector solutions of \eqref{52} in autonomous case with $\epsilon=1$ by the Nehari
manifold approach and blow up analysis. Concretely, the authors showed  that the radial positive vector ground state $(u_n,v_n)$ of the subcritical system obtained by replacing $2_s^*-1$ by $2_s^*-1-\tau_n$ with $\tau_n\to 0$ as $n\to \infty$
approximates to a  radial positive vector ground state of \eqref{52} if the ground state energy is less than
$\frac{1}{N}\widetilde S^{\frac{N}{2}},$ where $\widetilde S$ denotes the sharp embedding constant from $H^1(\mathbb R^N) $ to $L^{2^*}(\mathbb R^N).$ Based on the results established in \cite{chen-zou2012}, the authors in \cite{chen-zou2014} further considered the concentration property of the ground state for \eqref{52} using the penalization method. Naturally, we except to investigate the existence, multiplicity and concentration property of positive vector solutions for the nonlocal case \eqref{op}.
Before stating our main results, we first give some assumptions on the potential $a$ and $b$ as follows.
\begin{itemize}
 \item[(P1)] There exist positive constants $a_0$ and $b_0$ such that
 $$\inf_{x\in\mathbb{R}^N}b(x)=b_0,\quad \inf_{x\in\mathbb{R}^N}a(x)=a_0\leq \mu_0:=\left(\frac{2s(p+1)}{N(p-1)}S^{\frac{N}{2s}}C_{p+1}^{-\frac{p+1}{p-1}}\right)
 ^{\left(\frac{p+1}{p-1}-\frac{N}{2s}\right)^{-1}},$$
 where $S$ and $C_{p+1}$ are the sharp embedding constants from $\chi_s$ to $L^{2_s^*}(\mathbb R^N)$ and
 $\mathcal E$ to $L^{p+1}(\mathbb R^N)$ defined in \eqref{67} and \eqref{sharpconstant} below, respectively .

 \item[(P2)]There is a smooth bounded open domain $\Lambda\subset\mathbb{R}^N$ such that
 $$\inf_{x\in\Lambda}a(x)\leq \mu_0<a_1:=\inf_{x\in\partial\Lambda}a(x).$$
\end{itemize}
Noting that we only assume some local conditions (P1)-(P2) on $a$ and $b$ rather than assumptions in \cite{Yu-Zhao-Zhao}, where
the authors posed the global boundedness to the potential.
Moreover, compared with assumption \eqref{54} concerning both potentials $P_1$ and $P_2$ for system  \eqref{55},
in the present paper, we only need assumption (P2) on the potential $a$, which is not involved in the potential $b$.

As usual, if the ground state of \eqref{op} exists and has concentration property, we except that it converges to a ground state of a autonomous system. Consequently, for any fixed $x_0\in\mathbb R^N,$ we first consider the system
\begin{equation}\label{autonomoussys}
\begin{cases}
(-\triangle)^su +a(x_0) u=u^p+\lambda v\quad &\text{in} \mathbb~ \mathbb R^N,\\
(-\triangle)^sv +b(x_0) v=v^{2_s^*-1}+\lambda u &\text{in} \mathbb~ \mathbb R^N.
\end{cases}
\end{equation}
\begin{thm}
Let $N>2s,$ $s\in(0,1)$ and $p\in(1,2_s^*).$   Assume $\lambda<\sqrt{a(x_0)b_0(x_0)}$ and $a(x_0)\leq \mu_0,$ where
$\mu_0$ is defined in $(P1).$ Then system \eqref{autonomoussys} admits a positive vector ground state.
\end{thm}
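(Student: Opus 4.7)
The plan is to obtain the ground state by constrained minimization on a Nehari manifold, with a strict energy estimate to restore the compactness lost to the critical exponent. Let $E=H^s(\mathbb{R}^N)\times H^s(\mathbb{R}^N)$ be equipped with the bilinear form
\[
\langle (u_1,v_1),(u_2,v_2)\rangle_E = \int_{\mathbb{R}^N}\bigl((-\Delta)^{s/2}u_1\,(-\Delta)^{s/2}u_2 + a(x_0)u_1u_2 + (-\Delta)^{s/2}v_1\,(-\Delta)^{s/2}v_2 + b(x_0)v_1v_2 - \lambda(u_1v_2+u_2v_1)\bigr)dx.
\]
The hypothesis $\lambda<\sqrt{a(x_0)b(x_0)}$ together with Cauchy--Schwarz makes $\|\cdot\|_E$ equivalent to the standard product norm, so the functional
\[
I(u,v)=\tfrac12\|(u,v)\|_E^2-\tfrac{1}{p+1}\!\int_{\mathbb{R}^N}\!u_+^{p+1}dx-\tfrac{1}{2_s^*}\!\int_{\mathbb{R}^N}\!v_+^{2_s^*}dx
\]
is of class $C^1$ on $E$, and its nontrivial critical points give nonnegative weak solutions of \eqref{autonomoussys}. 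Introduce $\mathcal{N}=\{(u,v)\in E\setminus\{0\}:\langle I'(u,v),(u,v)\rangle=0\}$ and $c=\inf_{\mathcal{N}}I$. Standard Nehari arguments yield a unique positive projection of each nonzero ray onto $\mathcal{N}$, a uniform bound $\|(u,v)\|_E\ge\rho_0>0$ on $\mathcal{N}$, and on $\mathcal{N}$ the identity $I(u,v)=\frac{p-1}{2(p+1)}\int u_+^{p+1}+\frac{s}{N}\int v_+^{2_s^*}$, so $c>0$ and minimizing sequences are bounded.

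The heart of the argument, and what I expect to be the principal obstacle, is proving the strict bound $c<c_*:=\tfrac{s}{N}S^{N/(2s)}$, the threshold below which Palais--Smale compactness can be recovered. I intend to build a test pair $(\phi,\psi_\delta)$. Take $\phi>0$ to be a ground state of the scalar subcritical equation $(-\Delta)^s\phi+a(x_0)\phi=\phi^p$; its least energy $m_p(a(x_0))$ scales as $a(x_0)^{(p+1)/(p-1)-N/(2s)}$, and the precise definition of $\mu_0$ in (P1) is exactly calibrated so that $a(x_0)\le\mu_0$ gives $m_p(a(x_0))\le c_*$. Let $\psi_\delta$ be a standard cutoff of a Talenti-type extremal $U_\delta$ for $S$ concentrating at a point $x_\ast$ with $\phi(x_\ast)>0$. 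Projecting $(t\phi,\tau\psi_\delta)$ onto $\mathcal{N}$ and optimizing in $(t,\tau)$, the subcritical block contributes at most $m_p(a(x_0))\le c_*$; the critical block contributes the usual Brezis--Nirenberg correction, which is lower order as $\delta\to 0$ for $N>2s$; and the interaction term $-\lambda\int\phi\,\psi_\delta\,dx$ is strictly negative and dominates the correction for suitable $\delta$, delivering $c<c_*$. The delicate bookkeeping of the fractional Brezis--Nirenberg remainders against the interaction integral as $\delta\to 0$ is where I expect the main technical effort to go.

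Once $c<c_*$ is established, compactness is routine. For a Palais--Smale sequence $(u_n,v_n)$ at level $c$, boundedness follows from the Nehari identity; by translation invariance of the autonomous problem, a Lions-type concentration lemma supplies translates $y_n\in\mathbb{R}^N$ along which $(u_n(\cdot+y_n),v_n(\cdot+y_n))\rightharpoonup (u,v)\not=0$. A Brezis--Lieb decomposition applied to the $L^{p+1}$ and $L^{2_s^*}$ norms, combined with $\langle I'(u_n,v_n),(u_n,v_n)\rangle\to 0$, shows that the residuals $(\bar u_n,\bar v_n)=(u_n-u,v_n-v)$ satisfy $\|(\bar u_n,\bar v_n)\|_E^2=\int(\bar u_n)_+^{p+1}+\int(\bar v_n)_+^{2_s^*}+o(1)$; Sobolev's inequality then forces either $\bar v_n\to 0$ strongly or $\int(\bar v_n)_+^{2_s^*}\ge S^{N/(2s)}+o(1)$, the latter producing a residual energy of at least $c_*$ and contradicting $c<c_*$. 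Hence $(u_n,v_n)\to(u,v)$ strongly in $E$ and $(u,v)\in\mathcal{N}$ attains $c$. Finally, $I(|u|,|v|)\le I(u,v)$ (valid because $\lambda>0$) permits us to assume $u,v\ge 0$; since the resulting system is cooperative, the fractional strong maximum principle applied to each component promotes nonnegativity to strict positivity, yielding the desired positive vector ground state.
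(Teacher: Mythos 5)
Your overall plan mirrors the paper's strategy (Nehari manifold, strict bound $c < c_* := \frac{s}{N}S^{N/(2s)}$, Lions concentration plus translation, positivity), but you propose a genuinely different route for the strict energy bound, and it does work — though your description of it is off. The paper tests only with the single-component pair $(\widehat{W},0)$, where $\widehat{W}$ is the scalar subcritical ground state, whose energy equals $(\frac12-\frac1{p+1})\,a(x_0)^{\frac{p+1}{p-1}-\frac{N}{2s}}C_{p+1}^{\frac{p+1}{p-1}}$; the definition of $\mu_0$ is calibrated so that this is $\le c_*$, with equality exactly when $a(x_0)=\mu_0$, and the borderline case is settled by a soft contradiction: if $c_{x_0}=c_*$ then $(\widehat{W},0)$ would minimize on the Nehari manifold and hence be a critical point of the coupled system, which is impossible since the $v$-equation fails at $v\equiv0$, $u=\widehat w>0$. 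Your use of a two-component test with the interaction $-\lambda\int\phi\psi_\delta$ is a quantitative replacement for that soft step, and it is simpler than you make it sound: no balancing of Brezis--Nirenberg remainders as $\delta\to0$ is required. Fix any $\delta>0$ with $\int\phi\psi_\delta>0$ and set $F(\sigma):=\max_{r>0}I(r\phi,r\sigma\psi_\delta)$. Then $F(0)\le c_*$ by the $\mu_0$-calibration, and since the new component enters the quadratic part at order $\sigma^2$, the nonlinearity at order $\sigma^{2_s^*}$, but the cross term at order $\sigma$, the envelope theorem gives $F'(0^+)=-\lambda\int\phi\psi_\delta<0$; hence $c\le F(\sigma)<c_*$ for small $\sigma>0$. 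Your phrase that the ``critical block contributes the usual Brezis--Nirenberg correction, which is lower order'' is misleading: if the $v$-scale $\tau$ were optimized independently, that block alone would contribute on the order of $c_*$, not a small remainder. The decisive fact is just that the coupling is first order in $\sigma$ and everything else the new component adds is higher order.

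There is a genuine gap in the compactness step. The Brezis--Lieb reduction gives $\|(\bar u_n,\bar v_n)\|_E^2=\int(\bar u_n)_+^{p+1}+\int(\bar v_n)_+^{2_s^*}+o(1)$, but your dichotomy only eliminates the critical residual $\int(\bar v_n)_+^{2_s^*}$; you never show $\int(\bar u_n)_+^{p+1}\to0$, and since $H^s(\mathbb R^N)\hookrightarrow L^{p+1}(\mathbb R^N)$ is not compact, $\bar u_n\rightharpoonup0$ alone does not give this. The gap is closable — either run the full Brezis--Lieb splitting $I(u_n,v_n)=I(u,v)+I(\bar u_n,\bar v_n)+o(1)$, note that the near-Nehari identity for the residual forces $I(\bar u_n,\bar v_n)\ge(\frac12-\frac1{p+1})\|(\bar u_n,\bar v_n)\|_E^2+o(1)$, and combine with $I(u,v)\ge c$ — but as written the sketch ends prematurely. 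The paper avoids residual analysis entirely: the translated weak limit $(U,V)$ is a nontrivial critical point, so $I_{x_0}(U,V)\ge c_{x_0}$; weak lower semicontinuity of the equivalent $E$-norm together with Fatou applied to the nonnegative quantity $I_{x_0}-\frac1{p+1}I'_{x_0}(\cdot)(\cdot)$ (which involves only $\|(\cdot,\cdot)\|_E^2$ and $\int(v^+)^{2_s^*}$) gives the reverse inequality, and equality is reached without claiming strong convergence of the sequence. Your positivity step via $I(|u|,|v|)\le I(u,v)$ and the fractional strong maximum principle is fine.
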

\begin{remark}\rm
If $p=2_s^*-1$ and $\lambda<\sqrt{a_0b_0},$ then  there is no positive vector solution to \eqref{autonomoussys} for any $x_0\in\mathbb R^N$
by the Pohozaev identity. Therefore, we always assume $p<2_s^*-1$ in this work.
\end{remark}

In the present paper,  we investigate the existence of positive vector ground state for \eqref{autonomoussys} using the extension methods, mountain pass theorem and Nehari manifold approach, which is very different to that of \cite{chen-zou2012} for local case.
Indeed, in \cite{chen-zou2012}, the authors showed  that the radial positive vector ground state $(u_n,v_n)$ of the subcritical system obtained by replacing $2_s^*-1$ by $2_s^*-1-\tau_n$ with $\tau_n\to 0$ as $n\to \infty$
is bounded uniformly  in $H^1(\mathbb R^N)\times H^1(\mathbb R^N).$  As a result, there exists a subsequence which converges weakly to some $(u,v)\in H^1(\mathbb R^N)\times H^1(\mathbb R^N).$ Then the authors established the uniform $L^\infty(\mathbb R^N)$ estimates of $(u_n,v_n)$ by blow up analysis when the ground state energy is less than $\frac{1}{N}\widetilde S^{\frac{N}{2}}$, which along with the radial character  of $(u_n,v_n)$ concludes that $(u_n,v_n)$
converges strongly to $(u,v)\in H^1(\mathbb R^N)\times H^1(\mathbb R^N).$  Consequently, $(u,v)$ is a positive vector solution of   autonomous case of \eqref{52} with $\epsilon=1.$

For the scalar  equation \eqref{51}, as mentioned above, Fall, Mahmoudi and  Valdinoci \cite{Fall-Mahmoudi-Valdinoci} showed that the concentration positions must be critical points of  $V$ under suitable assumptions. While for systems,  the positions of concentration points become more complicated.
For any $x\in \mathbb R^N,$ let $c_x$ be the  ground state energy of \eqref{autonomoussys} with $x_0$ replaced by $x.$
Furthermore, under assumption (P2), we define
\begin{equation}\label{o}
\mathcal O=\left\{x\in \Lambda: c_x=c_0:=\inf_{y\in\Lambda } c_y\right\}.
\end{equation}

\begin{thm}\label{53}
Assume that $(P1)$ and $(P2)$ hold. Let $N>2s,$ $s\in(0,1)$ and $p\in(1,2_s^*)$. If  $\lambda<\min\{\sqrt{a_0b_0},$ $\sqrt{ (a_1-\mu_0)b_0}\}.$ Then $\mathcal O\neq\emptyset,$ $\mathcal O\subset\subset \Lambda$ and there exists $\epsilon_0>0$ such that for any $\epsilon\in
(0,\epsilon_0),$ system \eqref{op} has a positive vector ground state $(u_\epsilon, v_\epsilon)$ with the following properties:
\begin{itemize}
\item [\rm (I)] There exist $x_\epsilon\in \Lambda$ and $x_0\in\mathcal O $ such that
$$u_\epsilon(x_\epsilon)+v_\epsilon(x_\epsilon)=\max_{x\in\mathbb R^N}(u_\epsilon(x)+v_\epsilon(x)).$$
Moreover, $x_\epsilon\to x_0$ as $\epsilon\to 0.$
\item[\rm (II)] Define $(\widetilde{u}_\epsilon(x),\widetilde{v}_\epsilon(x))=(u_\epsilon(\epsilon x+x_\epsilon),v_\epsilon(\epsilon x+x_\epsilon))$ for any $\epsilon\in (0,\epsilon_0).$
    Then, up to a subsequence, $ (\widetilde{u}_\epsilon,\widetilde{v}_\epsilon)$
       converges as $\epsilon\to 0$ to a ground state of \eqref{autonomoussys} with $x_0$ given in (I).
\item[\rm (III)] There exists a positive constant $C$ independent of $\epsilon$ such that
$$u_\epsilon(x)+v_\epsilon(x)\leq\frac{C\epsilon^{N+2s}}{\epsilon^{N+2s}+|x-\epsilon x_\epsilon|^{N+2s}}\quad\text{for any } x\in\mathbb R^N.$$
\end{itemize}
\end{thm}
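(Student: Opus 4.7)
My plan is to adapt the del Pino--Felmer penalization scheme to the coupled critical system \eqref{op}, combining it with Nehari/mountain pass variational methods and the autonomous existence result stated in the previous theorem. Rescaling $y = x/\epsilon$ turns \eqref{op} into
\begin{equation*}
(-\triangle)^s u + a_\epsilon u = u^p + \lambda v, \qquad (-\triangle)^s v + b_\epsilon v = v^{2_s^*-1} + \lambda u,
\end{equation*}
with $a_\epsilon(y) = a(\epsilon y)$, $b_\epsilon(y) = b(\epsilon y)$, posed on the rescaled set $\Lambda_\epsilon = \epsilon^{-1}\Lambda$. Outside $\Lambda_\epsilon$ I would replace $u_+^p$ and $v_+^{2_s^*-1}$ by the truncations $\min\{u_+^p,\eta a_0 u\}$ and $\min\{v_+^{2_s^*-1}, \eta b_0 v\}$ for small $\eta$, so that the resulting penalized functional $J_\epsilon$ on $H^s(\mathbb{R}^N)\times H^s(\mathbb{R}^N)$ has global mountain pass geometry and an (AR)-type condition away from $\Lambda_\epsilon$. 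The coupling is controlled since $\lambda<\sqrt{a_0 b_0}$ makes the quadratic form $\int(a_0 u^2 + b_0 v^2 - 2\lambda u v)$ positive definite, which also ensures that any critical point whose components stay below the truncation levels outside $\Lambda_\epsilon$ is a genuine solution of the original system.

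For each small $\epsilon$ I would produce a positive critical point $(u_\epsilon, v_\epsilon)$ of $J_\epsilon$ at its mountain pass level $c_\epsilon$ via the Nehari manifold. The critical term on the $v$-component forces a careful compactness analysis: the Palais--Smale condition holds at any level below an explicit threshold $c^\ast$ built from the sharp constant $S$, and (P1) is tailored precisely so that the autonomous ground state level $c_x$ stays strictly below $c^\ast$ whenever $a(x)\leq\mu_0$. Using rescaled, cut-off autonomous ground states as admissible paths gives $c_\epsilon \leq c_0 + o(1)$; a concentration-compactness argument, combined with uniform $L^\infty$ and $C^{0,\alpha}$ bounds coming from fractional Moser iteration and the Caffarelli--Silvestre extension, yields the matching lower bound and a translation sequence $y_\epsilon$ such that $(\widetilde u_\epsilon,\widetilde v_\epsilon)(z) := (u_\epsilon, v_\epsilon)(z + y_\epsilon)$ converges locally uniformly to a ground state $(U,V)$ of \eqref{autonomoussys} at the point $x_0 := \lim \epsilon y_\epsilon$. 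The strict inequality $a_1>\mu_0$ in (P2), together with the monotonicity of $c_x$ in $a(x)$, gives $c_x > c_0$ on $\partial\Lambda$, hence $\mathcal{O}\neq\emptyset$, $\mathcal{O}\subset\subset\Lambda$, and $x_0\in\mathcal O$. Taking $x_\epsilon$ to be a maximum point of $u_\epsilon + v_\epsilon$ (which coincides with $\epsilon y_\epsilon$ up to a bounded translation) then proves (I)--(II).

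For (III), I would construct a radial super-solution of the form
\begin{equation*}
W(x) = \frac{C\epsilon^{N+2s}}{\epsilon^{N+2s}+|x-\epsilon x_\epsilon|^{N+2s}},
\end{equation*}
whose fractional Laplacian decays like $|x|^{-(N+2s)}$, and invoke a vector comparison principle for the system. Coercivity of the linear part outside $\Lambda_\epsilon$, guaranteed by $\lambda<\sqrt{(a_1-\mu_0)b_0}$, closes the argument and, a posteriori, shows that the penalized nonlinearities are inactive outside $\Lambda_\epsilon$, so that $(u_\epsilon,v_\epsilon)$ does solve the original system. The main obstacle is the interplay of the critical nonlinearity with the penalization: keeping the mountain pass value below the Palais--Smale threshold requires the precise choice of $\mu_0$ in (P1), while removing the penalization requires the decay bound (III) to hold uniformly in $\epsilon$ so that $u_\epsilon^p \leq \eta a_\epsilon u_\epsilon$ and $v_\epsilon^{2_s^*-1}\leq \eta b_\epsilon v_\epsilon$ outside $\Lambda_\epsilon$. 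Handling these two critical features simultaneously, while tracking both components through the nonlocal comparison arguments, is the technical heart of the proof.
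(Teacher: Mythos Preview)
Your overall strategy---penalize outside $\Lambda_\epsilon$, obtain a mountain-pass ground state below the compactness threshold $\frac{s}{N}S^{N/2s}$, run concentration analysis to identify the limiting point in $\mathcal O$, then use uniform $L^\infty$ bounds and a comparison with a polynomially decaying barrier to remove the penalization---matches the paper's architecture closely. The paper carries this out via the Caffarelli--Silvestre extension rather than directly in $H^s$, but that is a packaging choice, not a substantive difference.

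There is, however, a real gap in your argument for $\mathcal O\subset\subset\Lambda$. You invoke ``monotonicity of $c_x$ in $a(x)$'' together with $a_1>\mu_0$ to conclude $c_x>c_0$ on $\partial\Lambda$. But $c_x$ depends on \emph{both} $a(x)$ and $b(x)$, and nothing in (P1)--(P2) controls $b$ on $\partial\Lambda$ relative to its values in the interior; monotonicity in $a$ alone cannot separate $c_{\partial\Lambda}$ from $c_0$. The paper instead proves the sharp equality $c_{x}=\frac{s}{N}S^{N/2s}$ for every $x\in\partial\Lambda$, independently of $b(x)$, by bounding $I_{x}(tU,tV)$ from below by a sum of two decoupled scalar problems and showing each has energy at least $\frac{s}{N}S^{N/2s}$. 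That lower bound is exactly where the hypothesis $\lambda<\sqrt{(a_1-\mu_0)b_0}$ enters: it guarantees that the effective potential $a(x)-\lambda^2/b(x)\geq a_1-\lambda^2/b_0>\mu_0$ on $\partial\Lambda$, which by the definition of $\mu_0$ forces the scalar ground-state energy above $\frac{s}{N}S^{N/2s}$. Combined with $c_{x}<\frac{s}{N}S^{N/2s}$ whenever $a(x)\le\mu_0$, this yields $\mathcal O\subset\subset\Lambda$.

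Relatedly, you attribute the condition $\lambda<\sqrt{(a_1-\mu_0)b_0}$ to ``coercivity of the linear part outside $\Lambda_\epsilon$'' in the comparison step for (III). That coercivity comes from $\lambda<\sqrt{a_0b_0}$ alone; the second bound on $\lambda$ is used only in the boundary-level estimate above. Once you repair the $\mathcal O\subset\subset\Lambda$ step along these lines, the rest of your outline goes through essentially as in the paper.
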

Noting that Theorem \ref{53} focuses on the existence, concentration property and decay estimate  of the positive vector ground state.
Naturally, we want to ask weather other positive vector solutions exist or not for \eqref{op},
and if exist, weather they have the same properties to those of the ground state obtained in Theorem \ref{53}.

\begin{thm}\label{mul}
Under all  assumptions of Theorem \ref{53}, if we suppose in addition that
\begin{itemize}
 \item[\rm (P3)]there exists $x_0\in \Lambda$ such that
$a(x_0)=a_0$ and $b(x_0)=b_0.$
\end{itemize}
Then for any $\delta>0$  such that
$\mathcal O_\delta:=\{x\in\mathbb R^N: dist(x,\mathcal O)\leq \delta\}\subset\subset\Lambda,$
there exists $\epsilon_\delta>0$ such that system \eqref{op} admits at least $cat_{\mathcal O_\delta}(\mathcal O)$
positive vector solutions for any $\epsilon\in (0,\epsilon_\delta)$. Moreover, the properties  (I)-(III) in Theorem \ref{53} also hold for these solutions.
\end{thm}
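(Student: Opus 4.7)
The plan is to prove Theorem \ref{mul} by combining the penalization scheme developed in the proof of Theorem \ref{53} with Lusternik--Schnirelmann category theory, following the general philosophy of Benci--Cerami adapted to the fractional coupled setting. The additional hypothesis (P3) is essential because it forces $c_0=\inf_{y\in\Lambda}c_y=c_{x_0}$ to coincide with the autonomous ground state energy associated with the globally smallest potentials $(a_0,b_0)$. This means that any minimizing sequence at level $c_0$ cannot "escape to infinity with lower energy," which is precisely what is needed to compare the topology of sub-level sets of the penalized energy with that of $\mathcal{O}$ uniformly in $\epsilon$.

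First I would recall the penalized functional $J_\epsilon$ and its Nehari manifold $\mathcal{N}_\epsilon$ from the proof of Theorem \ref{53}, together with the $(PS)$ condition below the threshold $c_0+o(1)$, and the decay/concentration estimates that guarantee that critical points of $J_\epsilon$ in a suitable sub-level are genuine solutions of \eqref{op}. Next, for each $y\in\mathcal{O}$ I would construct a test map $\Phi_\epsilon:\mathcal{O}\to\mathcal{N}_\epsilon$ by transplanting a fixed ground state $(U,V)$ of \eqref{autonomoussys} at $x_0$ (which exists by Theorem 1.1 and, by (P3), realizes the level $c_0$): concretely set
\begin{equation*}
\Phi_\epsilon(y)(x)=\left(t_\epsilon(y)\,\eta\!\left(\tfrac{|x-y|}{\epsilon}\right)U\!\left(\tfrac{x-y}{\epsilon}\right),\;t_\epsilon(y)\,\eta\!\left(\tfrac{|x-y|}{\epsilon}\right)V\!\left(\tfrac{x-y}{\epsilon}\right)\right),
\end{equation*}
where $\eta$ is a cutoff supported in a ball smaller than $\mathrm{dist}(\mathcal{O},\partial\Lambda)$ and $t_\epsilon(y)>0$ is the unique projection onto $\mathcal{N}_\epsilon$. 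A direct change of variables using the continuity of $y\mapsto c_y$ and the fact that $c_y=c_0$ on $\mathcal{O}$ gives $\limsup_{\epsilon\to 0}\sup_{y\in\mathcal{O}}J_\epsilon(\Phi_\epsilon(y))\le c_0$, so $\Phi_\epsilon$ lands in the sub-level $\mathcal{N}_\epsilon^{c_0+h(\epsilon)}$ for some $h(\epsilon)\to 0^+$.

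Then I would define a barycenter map $\beta_\epsilon:\mathcal{N}_\epsilon^{c_0+h(\epsilon)}\to\mathbb{R}^N$ by
\begin{equation*}
\beta_\epsilon(u,v)=\frac{\int_{\mathbb{R}^N}\chi(x)\bigl(|u(x)|^2+|v(x)|^2\bigr)\,dx}{\int_{\mathbb{R}^N}\bigl(|u(x)|^2+|v(x)|^2\bigr)\,dx},
\end{equation*}
where $\chi$ is the identity on $\mathcal{O}_\delta$ and bounded outside. The central concentration lemma to be proved is: if $(u_\epsilon,v_\epsilon)\in\mathcal{N}_\epsilon$ satisfies $J_\epsilon(u_\epsilon,v_\epsilon)\to c_0$, then its "mass center" lies in $\mathcal{O}_\delta$ for $\epsilon$ small. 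This is where the key work is: one blows up around a maximum point $x_\epsilon$, uses (P3) plus the strict inequality $\lambda<\min\{\sqrt{a_0b_0},\sqrt{(a_1-\mu_0)b_0}\}$ to exclude concentration on $\partial\Lambda$ or at infinity (via the comparison $c_y>c_0$ outside $\mathcal{O}$), and concludes $x_\epsilon\to\mathcal{O}$. With this in hand one shows $\beta_\epsilon(\mathcal{N}_\epsilon^{c_0+h(\epsilon)})\subset\mathcal{O}_\delta$ and that $\beta_\epsilon\circ\Phi_\epsilon$ is homotopic to the inclusion $\mathcal{O}\hookrightarrow\mathcal{O}_\delta$.

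With the homotopy equivalence in place, a standard LS category argument for $C^1$ functionals on $C^{1,1}$ manifolds satisfying $(PS)$ yields at least $\mathrm{cat}_{\mathcal{O}_\delta}(\mathcal{O})$ critical points of $J_\epsilon$ in $\mathcal{N}_\epsilon^{c_0+h(\epsilon)}$. Finally, as in the proof of Theorem \ref{53}, I would verify that these critical points are positive vector solutions of the original system \eqref{op} (not merely of the penalized one) by running the Moser-type iteration together with the uniform $L^\infty$ bound and the pointwise decay of part (III); the concentration statement (I) and the rescaled convergence (II) then follow exactly as in Theorem \ref{53} applied to each such solution. The main obstacle I expect is the compactness/concentration lemma that controls where low-energy sequences concentrate: in the fractional coupled critical regime, the nonlocal Brezis--Lieb splitting must be performed simultaneously on both components, and one must rule out not only escape to infinity but also partial concentration with a bubble at the critical Sobolev level carried by the $v$-component, which is where the precise strict threshold on $\lambda$ together with the definition of $\mu_0$ plays its decisive role.
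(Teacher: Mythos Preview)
Your proposal follows essentially the same Benci--Cerami/penalization scheme as the paper: transplant a ground state to build $\Phi_\epsilon:\mathcal O\to N_\epsilon$, define a barycenter $\beta_\epsilon$, prove a concentration lemma forcing $\beta_\epsilon(\widetilde N_\epsilon)\subset\mathcal O_\delta$ (this is Lemma~\ref{30}, Case~2, where (P3) enters exactly as you describe, via the comparison $I_{x_0}\le I_\epsilon$ since $a_0,b_0$ are global minima), then apply Lusternik--Schnirelmann theory and finally remove the penalization by the decay estimates of Section~5.

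One technical point you should not gloss over: you invoke ``LS category for $C^1$ functionals on $C^{1,1}$ manifolds'', but the truncated nonlinearities $f_\epsilon,g_\epsilon$ are only continuous (they have a corner at $s=\alpha$), so $I_\epsilon$ is $C^1$ but not $C^2$ and the Nehari set $N_\epsilon$ is \emph{not} known to be a $C^1$ manifold. The paper handles this by the Szulkin--Weth device (Lemmas~\ref{Xepsilon} and~\ref{sphere}): one pulls everything back to the open subset $S_{s,\epsilon}^+$ of the unit sphere via the homeomorphism $\overline\tau_\epsilon$, works with $\overline\Phi_\epsilon=I_\epsilon\circ\overline\tau_\epsilon$, and applies category theory there (Corollary~\ref{spherepscondition} and Theorem~\ref{multiplicity}). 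Your concentration lemma also differs in mechanics---the paper uses a Lions-type nonvanishing argument to locate $z_\epsilon$ rather than a maximum-point blow-up (the $L^\infty$ bounds come only afterwards, in Lemma~\ref{uniformlybound})---but this is a matter of order, not substance.
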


 We obtain the polynomial decay results for the positive vector solutions of system \eqref{op} instead of the exponential decay  in the local case. Moreover, we conclude not only the existence, concentration property and decay estimate of the positive vector ground state,
but also the multiplicity of positive vector solutions and similar properties to those of the ground state in Theorem \ref{mul}.

We would like to mention here that, there are some essential difficulties in dealing with our system \eqref{op}.
The first one of the main difficulties arises in the nonlocal character of the operator $(-\triangle)^s.$
One useful method to study the fractional Laplacian is the integral equations method, which turns a given fractional Laplacian equation into its equivalent integral equation, and then various properties of the original equation can be obtained by investigating the integral equation,
see \cite{cfy,CLM} and references therein.
Recently, Chen and Li et al. have developed a direct method to investigate the nonlocal problems, see \cite{chenliliyan1,CLM,CQ} and references therein.
However, these methods do not turn the nonlocal operator into a  local one,
which makes many traditional methods in studying the local differential operators no longer work.
To overcome this difficulty, we use the extension method introduced by Cafarelli and Silvestre \cite{cs},
which turns nonlocal problems involving the fractional Laplacian into local ones in higher dimensions,
and therefore some additional difficulties followed with, for example, the extension functional is not homogeneous
and the truncation argument becomes more delicate since we need to take care the trace of the involved functions
which defined in an upper half space $\mathbb R^{N+1}_+.$
The second main difficulty comes from the critical nonlinearities in using variational methods due to the lack of compactness.
To overcome this, we will use a version of the concentration compactness principle established in \cite{Dipierro-Medina-Valdinoci}.
Another difficulty arises in the linearly coupled terms, which makes our analysis more complicated in establishing various estimates.

The rest of the paper is organized as follows. In section 2, we translate our system into its  extension form, and then consider the variational character of  a modified extension system. Section 3 is devoted to the study of the ground state for the
autonomous system \eqref{autonomoussys}. In section 4, the  multiplicity of the positive vector solutions are concluded for the modified extension system for small $\epsilon$. In section 5, we prove that the
positive vector solutions obtained in section 4 for the modified extension system also solve the extension system of the original problem by making a rescaling,
and then we complete the proofs of our main theorems.\\

{\bf Notation}
\begin{itemize}
\item We use $S$ and $C_{p+1}$ to stand for the positive constants defined in \eqref{67} and \eqref{sharpconstant} below, respectively.
\item For any $x\in \mathbb R^N,$ $y\geq 0$ and $r>0,$ the symbol $B_r(x)$ denotes the ball  centered at  $x$ with radius $r$ in $\mathbb R^N,$
and $\mathcal B_r^+((x,y))$ denotes the ball centered at  $(x,y)$ with radius $r$ in $\mathbb R^{N+1}_+=\mathbb R^N\times \mathbb(0,+\infty).$
Particularly, we denote respectively $B_r(0)$ and  $\mathcal B^+_r((0,0))$ by $B_r$ and $\mathcal B_r^+$
\item For any  $U$ belongs to  $\chi_s$ defined in section 2, we denote its trace by $u.$
\item We denote the norm in $L^q(\mathbb R^N)$ by $||\cdot||_q$.
\end{itemize}

\section{The modified extension problem}

 Since the fractional Laplacian operator $(-\triangle)^s$ is  nonlocal, many powerful methods for local elliptic
 equation are not available any more. To overcome this difficulty, we use the extension method developed by Cafarelli and Silvestre  in \cite{cs}.  Concretely,  for a function $u:\mathbb{R}^N \to \mathbb{R}$, consider its extension $U:\mathbb{R}^N\times[0, \infty) \to \mathbb{R}$ that satisfies
\begin{equation*}
\left\{\begin{array}{ll}
{\rm div}(y^{1-2s} \nabla U)=0\quad &  \text{in~} \mathbb{R}^N\times[0, \infty),\\
U(x, 0) = u(x).
\end{array}
\right.
\label{Loc}
\end{equation*}
Then there hold
$$(-\lap)^{s}u (x) = - k_s \displaystyle\lim_{y \to 0^+}
y^{1-2s} \frac{\partial U(x,y)}{\partial y},  \quad x \in \mathbb{R}^N,$$
and
\begin{equation}\label{56}
U(x,y)=P_y^s*u(x)=C_{N,s}\int_{\mathbb R^N}\frac{y^{2s}}{(|x-\xi|^2+y^2)^{\frac{N+2s}{2}}}u(\xi)d\xi,\quad (x,y)\in \mathbb R^{N+1}_+,
\end{equation}
where $k_s$ and $C_{N,s}$ are normalized positive constants (see, e.g., \cite{Barrios-Colorado-Pablo,cs,Jin-Li-Xiong}).
The extension operator is well defined for any $u\in\dot{H}(\mathbb R^N)$, which is defined as the  completion of $C_c^\infty({\mathbb R^{N}})$ under the norm
$$||u||_{\dot{H}}^2:= C_{N,s}\int_{\mathbb R^{2N}}\frac{(u(x)-u(y))^2}{|x-y|^{N+2s}}dxdy.$$
 We define the weighted Sobolev
space $\chi_s$ as the completion of $C_c^\infty(\overline{\mathbb R^{N+1}_+})$ under the norm
$$||U||_{\chi_s}^2:=k_s\int_{\mathbb R^{N+1}_+}y^{1-2s}|\nabla U|^2dxdy.$$
For any $U\in \chi_s$, its trace on $\mathbb R^N\times \{y=0\}$ is well defined and denoted by $U(\cdot,0)$ or $u$ for simplicity in this paper.
Furthermore, we define a Hilbert space $\mathcal E$ as
\begin{equation}\label{65}
\mathcal E:=\left\{U\in \chi_s:\int_{\mathbb R^N}|u(x)|^2dx<\infty\right\}
\end{equation}
equipped with the norm
$$||U||_{\mathcal E}^2:=||U||_{\chi_s}^2+||u||_2^2.$$
Next, we state some  useful embedding results in \cite{Barrios-Colorado-Pablo,Jin-Li-Xiong,Dipierro-Medina-Valdinoci}.
\begin{lemma}\label{59} There hold
\begin{itemize}
\item [\rm (I)]The embedding $\mathcal E\hookrightarrow L^q(\mathbb R^N)$ for any $q\in [2,2_s^*]$  is continuous. Moreover,
the  embedding $\mathcal E\hookrightarrow L^q_{\text{loc}}(\mathbb R^N)$ for any $q\in [2,2_s^*)$ is compact.
\item [\rm (II)] There exists a sharp constant $S=S(s,N)$ such that
\begin{equation}\label{67}
S||u||_{2_s^*}^{2}\leq ||U||^2_{\chi_s}\quad\text{for any~} U\in \chi_s.
\end{equation}
Moreover, the equality holds on the family of functions $W_\epsilon$ which is the extension of
$$w_\epsilon=\frac{\epsilon^{\frac{N-2s}{2}}}{(\epsilon^2+|x|^2)^{{\frac{N-2s}{2}}}},~~\epsilon>0.$$
\item [\rm (III)] There exist positive constants $\widehat C$ and $\gamma=1+\frac{2}{N-2s}$ such that, for any $U\in\chi_s,$
$$\left(\int_{\mathbb R^{N+1}_+}y^{1-2s}| U|^{2\gamma}\right)^{1/2\gamma}\leq \widehat C\left(\int_{\mathbb R^{N+1}_+}y^{1-2s}|\nabla U|^2dxdy\right)^{1/2}.$$
\item [\rm (IV)] If $\{U_n\}$ is a bounded sequence in $\chi_s,$  then it is pre-compact in $L_{\text{loc}}^2(y^{1-2s},\mathbb R^{N+1}_+), $  where $L^2(y^{1-2s},\mathbb R^{N+1}_+)$ denotes the weighted Lebesgue  space equipped with the norm
    $$||U||^2_{L^2\left(y^{1-2s},\mathbb R^{N+1}_+\right)}:=\int_{\mathbb R^{N+1}_+}y^{1-2s}U^2dxdy.$$
\end{itemize}
\end{lemma}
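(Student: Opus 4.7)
The plan is to derive each part from the Caffarelli--Silvestre extension theory combined with standard (weighted) Sobolev theory, essentially following \cite{Barrios-Colorado-Pablo,Jin-Li-Xiong,Dipierro-Medina-Valdinoci}. I would treat (II) and (III) as the analytic core and then deduce (I) and (IV) from them by interpolation and Rellich-type arguments.

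For (II), I would start from the trace/extension identity
$$C_{N,s}\int_{\mathbb R^{2N}}\frac{(u(x)-u(y))^2}{|x-y|^{N+2s}}\,dx\,dy=\inf\Bigl\{k_s\int_{\mathbb R^{N+1}_+}y^{1-2s}|\nabla V|^2\,dx\,dy:\ V\in\chi_s,\ V(\cdot,0)=u\Bigr\},$$
the infimum being attained by the Poisson extension \eqref{56}. This reduces the inequality in (II) to the classical fractional Sobolev inequality $S\|u\|_{2_s^*}^2\leq\|u\|_{\dot H^s}^2$. The sharp constant $S$ and the Aubin--Talenti-type extremal family $w_\epsilon$ are known; equality for $W_\epsilon$ then follows because the Poisson extension of a minimizer is itself a minimizer, which can be verified either by a direct computation of $\|W_\epsilon\|_{\chi_s}$ from the explicit formula \eqref{56} or by appealing to uniqueness of extremals up to translation and scaling.

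For (III), the key observation is that for $s\in(0,1)$ the weight $y^{1-2s}$ is a Muckenhoupt $A_2$-weight, corresponding to an effective dimension $N+2-2s$ on $\mathbb R^{N+1}$. Extending $U\in\chi_s$ evenly across $\{y=0\}$ preserves $\int y^{1-2s}|\nabla U|^2$ up to a factor of two, so the inequality reduces to a weighted Sobolev inequality on all of $\mathbb R^{N+1}$ with weight $|y|^{1-2s}$. The resulting critical exponent is $\tfrac{2(N+2-2s)}{(N+2-2s)-2}=\tfrac{2(N+2-2s)}{N-2s}=2\gamma$ with $\gamma=1+\tfrac{2}{N-2s}$, exactly as stated; this is precisely the inequality established in \cite{Dipierro-Medina-Valdinoci}, which I would quote once the reduction above has been made.

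Finally, part (I) is obtained by composing the trace step with the endpoint Sobolev embedding at $q=2_s^*$ supplied by (II) and the direct $L^2$-control built into the $\mathcal E$-norm, with the intermediate exponents coming from interpolation; compactness on bounded sets then follows from the Rellich--Kondrachov theorem for $H^s(\Omega)$ on bounded $\Omega\subset\mathbb R^N$ combined with interpolation against the subcritical tail. Part (IV) is deduced from (III) via a standard Rellich argument: on each cylinder $\mathcal B_R^+$, (III) gives a uniform $L^{2\gamma}(y^{1-2s})$-bound, and the uniform bound on $\int y^{1-2s}|\nabla U_n|^2$ together with a Poincar\'e--Fr\'echet--Kolmogorov argument for $A_2$-weighted Sobolev spaces yields strong convergence in $L^2(y^{1-2s},\mathcal B_R^+)$, after which a diagonal extraction gives local precompactness on $\mathbb R^{N+1}_+$. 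The only real obstacle is the $A_2$/weighted Sobolev step underlying (III), which is technical but fully handled in the cited references; consequently the actual work at this point reduces to checking that the function spaces $\chi_s$ and $\mathcal E$ as defined here coincide with those in the references, and that the sharp constant and extremal family in (II) are stated in a compatible normalization.
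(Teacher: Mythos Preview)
Your proposal is correct and in fact goes well beyond what the paper does: the paper presents this lemma without proof, merely prefacing it with ``we state some useful embedding results in \cite{Barrios-Colorado-Pablo,Jin-Li-Xiong,Dipierro-Medina-Valdinoci}'' and citing those same references. Your outline of how (II) reduces to the sharp fractional Sobolev inequality via the extension identity, how (III) is the $A_2$-weighted Sobolev inequality from \cite{Dipierro-Medina-Valdinoci}, and how (I) and (IV) follow by interpolation and Rellich-type arguments is exactly the content behind those citations, so there is nothing to correct or add.
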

For more information on the extension method and extension spaces, we refer to \cite{Barrios-Colorado-Pablo,cs,Jin-Li-Xiong,Dipierro-Medina-Valdinoci} and references therein.

 Now, let us turn to our system \eqref{op}. Note that if $(\tilde{u},\tilde{v})$ is a solution of \eqref{op}, then by a direct calculation, $(u(x),v(x)):=(\tilde{u}(\epsilon x),\tilde{v}(\epsilon x))$
 solves the system
 \begin{equation}\label{cp}
\begin{cases}
(-\triangle)^s u +a(\epsilon x)u=u^p+\lambda v\quad &\text{in}\ \mathbb{R}^N,\\
(-\triangle)^s v +b(\epsilon x)v=v^{2_s^*-1}+\lambda u\quad &\text{in}\ \mathbb{R}^N.
\end{cases}
\end{equation}
By the extension method, we can translate system \eqref{cp} into
 \begin{equation}\label{ep}
 \begin{cases}
 -{\rm div} (y^{1-2s}\nabla U)=0\quad &\text{in} \ \mathbb R^{N+1}_+,\\
 -{\rm div} (y^{1-2s}\nabla V)=0& \text{in}\ \mathbb R^{N+1}_+,\\
 -k_s\mathop{\lim}\limits_{y\to0 }y^{1-2s}\frac{\partial U(x,y)}{\partial y}+a(\epsilon x)u=u^p+\lambda v &\text{on}\ \mathbb R^N,\\
 -k_s\mathop{\lim}\limits_{y\to0 }y^{1-2s}\frac{\partial V(x,y)}{\partial y}+b(\epsilon x)v=v^{2_s^*-1}+\lambda u\quad &\text{on}\ \mathbb R^N.
 \end{cases}
 \end{equation}
 Now we define a function space
 $$X_{s,\epsilon}:=\{(U,V)\in \chi_s\times\chi_s:||(U,V)||_{X_{s,\epsilon}}<\infty\},$$
 where
 $$||(U,V)||_{X_{s,\epsilon}}=\left(||U||_{a,\epsilon}^2+||V||_{b,\epsilon}^2\right)^{\frac{1}{2}},$$
 $$||U||_{a,\epsilon}^2 :=k_s\int_{\mathbb R^{N+1}_+}y^{1-2s}|\nabla U|^2dxdy +\int_{\mathbb R^N}a(\epsilon x)u^2dx,$$
 $$||V||_{b,\epsilon}^2 :=k_s\int_{\mathbb R^{N+1}_+}y^{1-2s}|\nabla V|^2dxdy +\int_{\mathbb R^N}b(\epsilon x)v^2dx.$$
In what follows, we omit the normalized constant $k_s$ for the convenience.

We call $(U,V)\in X_{s,\epsilon}$ a weak solution of \eqref{ep} if and only if  for any  $(\Phi, \Psi)\in X_{s,\epsilon}$, there holds
\begin{equation}\label{57}
\begin{aligned}
&\int_{\mathbb R^{N+1}_+} y^{1-2s} \nabla U\cdot\nabla \Phi dxdy+
\int_{\mathbb R^{N+1}_+} y^{1-2s} \nabla V\cdot\nabla \Psi dxdy +
\int_{\mathbb R^N}a(\epsilon x)u\phi dx +\int_{\mathbb R^N}b(\epsilon x)v\psi dx\\
=&\int_{\mathbb R^N}u^{p}\phi dx+\int_{\mathbb R^N}v^{2_s^*-1}\psi +\lambda\int_{\mathbb R^N}u\psi dx +
\lambda\int_{\mathbb R^N}v\phi dx .
\end{aligned}
\end{equation}
 Observe that if $(U,V)\in X_{s,\epsilon}$ is a weak solution of \eqref{ep}, then its trace $(u,v)$ is a weak solution of \eqref{cp}, and  $(U,V)$ is a critical point of the functional  $J_\epsilon:X_{s,\epsilon}\to \mathbb R $ defined as
 \begin{equation}\label{Jepsilon}
 \begin{aligned}
 J_\epsilon(U,V)=&\frac{1}{2}\int_{\mathbb R^{N+1}_+} y^{1-2s} |\nabla U|^2 dxdy+\frac{1}{2}\int_{\mathbb R^{N+1}_+} y^{1-2s} |\nabla V|^2 dxdy+\frac{1}{2}\int_{\mathbb R^N}a(\epsilon x)u^2 dx \\
 &+\frac{1}{2}\int_{\mathbb R^N}b(\epsilon x)v^2 dx+\frac{1}{p+1}\int_{\mathbb R^N}u^{p+1}dx +\frac{1}{2_s^*}
 \int_{\mathbb R^N}v^{2_s^*}dx +\lambda\int_{\mathbb R^N}uvdx.
 \end{aligned}
 \end{equation}
Define
 \begin{equation}\label{fandg}
 f(s)=
 \begin{cases}
 |s|^{p-1}s^+\quad&\text{if} \ s\leq\alpha,\\
 \alpha^{p-1}s^+&\text{if} \ s>\alpha,
 \end{cases}
 \quad\quad
 g(s)=
 \begin{cases}
 |s|^{2_s^*-2}s^+\quad&\text{if} \ s\leq\alpha,\\
 \alpha^{2_s^*-2}s^+&\text{if} \ s>\alpha,
 \end{cases}
 \end{equation}
 where
 \begin{equation}\label{alpha}
 0<\alpha<\min\left\{1, \left(\frac{\delta_0a_0}{2}\right)^{\frac{1}{p-1}},\left(\frac{\delta_0b_0}{2}\right)^{\frac{1}{2_s^*-2}}\right\},
 \end{equation}
$\delta_0$ is defined in \eqref{58} below, and $s^+:=\max\{s,0\}.$
Furthermore, we denote
$$\Lambda_\epsilon=\{x\in \mathbb R^N: \epsilon x\in \Lambda\},$$
where $\Lambda$ is given in (P2). Set
 \begin{equation}\label{fepsilon}
 f_\epsilon(x,s)=\chi_{\Lambda_\epsilon}(x)|s|^{p-1}s^++(1-\chi_{\Lambda_\epsilon}(x))f(s),\quad  (x,s)\in \mathbb R^N\times \mathbb R,
 \end{equation}
 \begin{equation}\label{gepsilon}
 g_\epsilon(x,s)=\chi_{\Lambda_\epsilon}(x)|s|^{2_s^*-2}s^++(1-\chi_{\Lambda_\epsilon}(x))g(s),\quad  (x,s)\in \mathbb R^N\times \mathbb R,
 \end{equation}
and
 \begin{equation} \label{primitive}
 F_\epsilon(x,s)=\int_0^sf_\epsilon(x,s)ds,\quad G_\epsilon(x,s)=\int_0^sg_\epsilon(x,s)ds.
 \end{equation}
Note that if $x\in \Lambda_\epsilon$ then
\begin{equation}\label{arfin}
(p+1)F_\epsilon(x,s)=|s^+|^{p+2}=f_\epsilon(x,s)s,\quad \forall s\in \mathbb R,
\end{equation}
 and
\begin{equation}\label{argin}
2_s^*G_\epsilon(x,s)=|s^+|^{2_s^*}=g_\epsilon(x,s)s, \quad\forall s\in \mathbb R.
\end{equation}
If $x\notin \Lambda_\epsilon,$  then for any $s\leq0,$  there hold $f_\epsilon(x,s)=0=F_\epsilon(x,s)$  and
$g_\epsilon(x,s)=0=G_\epsilon(x,s).$ For any $0<s\leq\alpha,$ the equalities  \eqref{arfin} and \eqref{argin} hold. Moreover, for any  $s>\alpha$,
\begin{equation}\label{arfout}
\begin{aligned}
F_\epsilon(x,s)&=\int_0^\alpha t^pdt+\alpha^{p-1}\int_\alpha^stdt
=\frac{1}{p+1}\alpha^{p+1} +\frac{1}{2}\alpha^{p-1}s^2-\frac{1}{2}\alpha^{p+1}
\leq\frac{1}{2}f_\epsilon(x,s)s.
\end{aligned}
\end{equation}
Similarly, by a straightforward calculation we have
$G_\epsilon(x,s) \leq \frac{1}{2}g_\epsilon(x,s)s$ for any  $s>0,$
which along with \eqref{arfin}, \eqref{argin} and \eqref{arfout} implies that
\begin{equation}\label{60}
2F_\epsilon(x,s)\leq f_\epsilon(x,s)s,\quad 2G_\epsilon(x,s)\leq g_\epsilon(x,s)s, \quad \forall (x,s)
\in \mathbb{R}^N\times\mathbb R.
\end{equation}

Now, we consider the following modified  system
\begin{equation}\label{mp}
 \begin{cases}
 -{\rm div} (y^{1-2s}\nabla U)=0\quad &\text{in} \ \mathbb R^{N+1}_+,\\
 -{\rm div} (y^{1-2s}\nabla V)=0& \text{in}\ \mathbb R^{N+1}_+,\\
 -\mathop{\lim}\limits_{y\to0 }y^{1-2s}\frac{\partial U(x,y)}{\partial y}+a(\epsilon x)u=f_\epsilon(x,u)+\lambda v &\text{on}\ \mathbb R^N,\\
 -\mathop{\lim}\limits_{y\to0 }y^{1-2s}\frac{\partial V(x,y)}{\partial y}+b(\epsilon x)v=g_\epsilon(x,v)+\lambda u\quad &\text{on}\ \mathbb R^N.
 \end{cases}
 \end{equation}
Noting that if $(U,V)\in X_{s,\epsilon}$ is a weak solution of \eqref{mp} defined by a similar way to \eqref{57}, then $(U,V)$ is a critical point of the functional $I_\epsilon:X_{s,\epsilon}\to \mathbb R $ defined as
 \begin{equation}\label{Iepsilon}
 \begin{aligned}
 I_\epsilon(U,V)=&\frac{1}{2}\int_{\mathbb R^{N+1}_+} y^{1-2s} |\nabla U|^2 dxdy+\frac{1}{2}\int_{\mathbb R^{N+1}_+} y^{1-2s} |\nabla V|^2 dxdy+\frac{1}{2}\int_{\mathbb R^N}a(\epsilon x)u^2 dx \\
 &+\frac{1}{2}\int_{\mathbb R^N}b(\epsilon x)v^2 dx-\int_{\mathbb R^N}F_\epsilon(x,u)dx -
 \int_{\mathbb R^N}G_\epsilon(x,v)dx -\lambda\int_{\mathbb R^N}uvdx.
 \end{aligned}
 \end{equation}

Recall that $\lambda<\sqrt{a_0b_0},$  we can fix a positive constant $\delta_0=\delta_0(\lambda)$ such that
 \begin{equation}\label{58}
0<\delta_0<\left(1-\frac{\lambda}{\sqrt{a_0b_0}}\right).
\end{equation}
This along with assumption (P1) implies that for any $\epsilon>0$ and $x\in \mathbb R^N$, there hold
\begin{equation}\label{delta0}
\lambda uv<(1-\delta_0)\sqrt{a_0b_0}|u||v|\leq\frac{1}{2}(1-\delta_0)a(\epsilon x)u^2+\frac{1}{2}(1-\delta_0)b(\epsilon x)v^2
.
\end{equation}

 \begin{lemma}\label{mmp}
 Assume $(P1)$. If $\lambda<\sqrt{a_0b_0},$ then $I_\epsilon$ satisfies the mountain geometry, that is,
 \begin{itemize}
 \item[\rm (I)] there exist $r,\tau>0$ independent of $\epsilon$ such that
 $$I_\epsilon(U,V)\geq\tau>0\quad \text{for any}~ ||(U,V)||_{X_{s,\epsilon}}=r; $$
 \item[\rm (II)] there is $(U,V)\in X_{s,\epsilon}$ with $||(U,V)||_{X_{s,\epsilon}}>r$ such that
 $I_\epsilon(U,V)<0.$
 \end{itemize}
 \end{lemma}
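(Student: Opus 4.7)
The plan is to exploit condition \eqref{delta0} to absorb the linear coupling $\lambda\int uv$ into the quadratic part of the norm, leaving a definite-sign quadratic form, and then to show that the remainder $\int F_\epsilon + \int G_\epsilon$ is controlled by sub-quadratic/super-quadratic terms in the norm thanks to the cutoff construction \eqref{fandg} and the embeddings of Lemma~\ref{59}. The $\epsilon$-independence of $r$ and $\tau$ will fall out because the only $\epsilon$-dependence enters through $a(\epsilon x)$ and $b(\epsilon x)$, which are uniformly pinched between the fixed constants $a_0,b_0$ and $\|a\|_\infty,\|b\|_\infty$ on compact arguments; the sharp embedding constants $S$, $C_{p+1}$, $\widehat C$ are all $\epsilon$-independent.

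For part (I), I would first apply \eqref{delta0} to obtain
\[
I_\epsilon(U,V)\ge \tfrac{\delta_0}{2}\bigl(\|U\|_{a,\epsilon}^2+\|V\|_{b,\epsilon}^2\bigr)-\int_{\mathbb R^N}F_\epsilon(x,u)\,dx-\int_{\mathbb R^N}G_\epsilon(x,v)\,dx.
\]
Next, I would split each primitive integral according to $\Lambda_\epsilon$ versus its complement. Inside $\Lambda_\epsilon$, \eqref{arfin} and \eqref{argin} give $F_\epsilon(x,u)\le\tfrac{1}{p+1}|u|^{p+1}$ and $G_\epsilon(x,v)\le \tfrac{1}{2_s^*}|v|^{2_s^*}$. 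Outside $\Lambda_\epsilon$, the choice of $\alpha$ in \eqref{alpha} forces $f(s)\le \alpha^{p-1}|s|\le \tfrac{\delta_0 a_0}{2}|s|$ and $g(s)\le \tfrac{\delta_0 b_0}{2}|s|$, so integrating gives
\[
\int_{\mathbb R^N\setminus\Lambda_\epsilon}F_\epsilon(x,u)\,dx\le \tfrac{\delta_0}{4}\int_{\mathbb R^N}a(\epsilon x)u^2\,dx,
\qquad
\int_{\mathbb R^N\setminus\Lambda_\epsilon}G_\epsilon(x,v)\,dx\le \tfrac{\delta_0}{4}\int_{\mathbb R^N}b(\epsilon x)v^2\,dx.
\]
Absorbing these two pieces into the quadratic part leaves
\[
I_\epsilon(U,V)\ge \tfrac{\delta_0}{4}\|(U,V)\|_{X_{s,\epsilon}}^2-\tfrac{1}{p+1}\|u\|_{p+1}^{p+1}-\tfrac{1}{2_s^*}\|v\|_{2_s^*}^{2_s^*}.
\]
The continuous embeddings $\mathcal E\hookrightarrow L^{p+1}(\mathbb R^N)$ and $\chi_s\hookrightarrow L^{2_s^*}(\mathbb R^N)$ from Lemma~\ref{59} (combined with $\|U\|_{\mathcal E}^2\le\max(1,1/a_0)\|U\|_{a,\epsilon}^2$) then give
\[
I_\epsilon(U,V)\ge \tfrac{\delta_0}{4}\|(U,V)\|_{X_{s,\epsilon}}^2-C_1\|(U,V)\|_{X_{s,\epsilon}}^{p+1}-C_2\|(U,V)\|_{X_{s,\epsilon}}^{2_s^*},
\]
with $C_1,C_2$ independent of $\epsilon$. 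Since $p+1>2$ and $2_s^*>2$, choosing $r>0$ small and $\tau>0$ accordingly yields the conclusion.

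For part (II), I would fix once and for all a pair $(U_0,V_0)\in X_{s,\epsilon}$ with traces $u_0,v_0\ge 0$ supported inside $\Lambda_\epsilon$ and not identically zero (for instance, take $U_0,V_0$ as the extensions of suitable smooth compactly supported nonnegative functions; for small $\epsilon$ the set $\Lambda_\epsilon$ is arbitrarily large, so this causes no trouble). Then for $t>0$,
\[
I_\epsilon(tU_0,tV_0)\le \tfrac{t^2}{2}\|(U_0,V_0)\|_{X_{s,\epsilon}}^2-\tfrac{t^{2_s^*}}{2_s^*}\int_{\Lambda_\epsilon}(v_0)^{2_s^*}\,dx,
\]
because on $\Lambda_\epsilon$ we have $G_\epsilon(x,tv_0)=\tfrac{1}{2_s^*}(tv_0)^{2_s^*}$ by \eqref{argin}, while the remaining terms $-\int F_\epsilon-\lambda\int u_0v_0$ are bounded above by a quadratic in $t$. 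Since $2_s^*>2$, $I_\epsilon(tU_0,tV_0)\to-\infty$ as $t\to\infty$, so taking $t$ large produces the required element.

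The one delicate point — really the only substantive issue — is ensuring $\epsilon$-independence of $r$ and $\tau$ in (I). This is handled by the uniform bound $a(\epsilon x)\ge a_0$ (respectively $b(\epsilon x)\ge b_0$), which lets the auxiliary inequalities outside $\Lambda_\epsilon$ be written against the $a(\epsilon x)u^2$ and $b(\epsilon x)v^2$ weights, and by the fact that $S$ and $C_{p+1}$ are intrinsic Sobolev constants. Part (II) is automatic for any fixed $(U_0,V_0)$, once one checks that such a pair can be chosen inside $X_{s,\epsilon}$ for every $\epsilon$ of interest.
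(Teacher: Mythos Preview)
Your argument is correct and follows the same overall strategy as the paper: use \eqref{delta0} to absorb the coupling into the quadratic form, then control the nonlinear primitives by higher powers of the norm via the embeddings. The tactical choices differ slightly. For (I), the paper avoids your split over $\Lambda_\epsilon$ and its complement by invoking the global pointwise bounds $F_\epsilon(x,s)\le \tfrac{1}{p+1}(s^+)^{p+1}$ and $G_\epsilon(x,s)\le \tfrac{1}{2_s^*}(s^+)^{2_s^*}$, valid for \emph{all} $x$ (the truncated primitives outside $\Lambda_\epsilon$ are dominated by the untruncated ones); this gives the cleaner coefficient $\delta_0/2$ in one step rather than your $\delta_0/4$ after absorption. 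For (II), the paper uses the simpler one-component test element $(t\Phi,0)$ with $\Phi\ge 0$ and $\operatorname{supp}\phi\subset\subset\Lambda_\epsilon$, so the $t^{p+1}$ term alone drives $I_\epsilon$ to $-\infty$; your two-component choice relying on the $t^{2_s^*}$ term works just as well. Neither difference is substantive---your splitting buys nothing extra here, while the paper's version is marginally shorter.
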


\begin{proof}
By the definitions of $F_\epsilon$ and $G_\epsilon$, we have that
\begin{equation}\label{FG}
F_\epsilon(x,s)\leq \frac{1}{p+1}(s^+)^{p+1},~~G_\epsilon(x,s)\leq \frac{1}{2_s^*}(s^+)^{2_s^*}
\quad \text{for any~}(x,s)\in \mathbb R^N\times \mathbb R,
\end{equation}
we then by \eqref{Iepsilon} and \eqref{delta0} deduce that
\begin{equation}\label{mp1}
 \begin{aligned}
 I_\epsilon(U,V)\geq&\frac{1}{2}\int_{\mathbb R^{N+1}_+} y^{1-2s} |\nabla U|^2 dxdy+\frac{1}{2}\int_{\mathbb R^{N+1}_+} y^{1-2s} |\nabla V|^2 dxdy+\frac{\delta_0}{2}\int_{\mathbb R^N}a(\epsilon x)u^2 dx \\
 &+\frac{\delta_0}{2}\int_{\mathbb R^N}b(\epsilon x)v^2 dx-\frac{1}{p+1}\int_{\mathbb R^N}(u^+)^{p+1}dx -\frac{1}{2_s^*}
 \int_{\mathbb R^N}(v^+)^{2_s^*}dx\\
\geq &\frac{\delta_0}{2}||(U,V)||_{X_{s,\epsilon}}^2-\frac{1}{p+1} C_{p+1}^{-\frac{p+1}{2}}||(U,V)||_{X_{s,\epsilon}}^{p+1}-\frac{1}{2_s^*} S^{-\frac{2_s^*}{2}}||(U,V)||_{X_{s,\epsilon}}^{2_s^*},
 \end{aligned}
 \end{equation}
where the last inequality follows from Lemma \ref{59}. This implies that (I) holds for small $r>0.$
Choose a nonnegative function $\Phi\in C_c^\infty(\overline{\mathbb R^{N+1}_+})$  with $\emptyset\neq supp~\phi\subset\subset\Lambda_\epsilon,$ then
\begin{equation*}
\begin{aligned}
I_\epsilon(t\Phi,0)=&\frac{t^2}{2}\int_{\mathbb R^{N+1}_+} y^{1-2s} |\nabla \Phi|^2 dxdy+\frac{t^2}{2}\int_{\mathbb R^N}a(\epsilon x)\phi^2 dx
 -\frac{t^{p+1}}{p+1}\int_{\mathbb R^N}\phi^{p+1}dx\\
 &\to -\infty \quad\text{as}~ t \to \infty.
\end{aligned}
 \end{equation*}
Then (II) holds. The proof is complete.
\end{proof}

Now, we define $$\Gamma:=\{\gamma\in C([0,1],X_{s,\epsilon}):\gamma(0)=(0,0),\ I_\epsilon(\gamma(1))<0\},$$
then $\Gamma\neq\emptyset$ thanks to Lemma \ref{mmp}. Furthermore, set
\begin{equation}\label{mpenergy}
c_\epsilon:=\inf_{\gamma\in\Gamma}\max_{t\in[0,1]}I_\epsilon(\gamma(t)),
\end{equation}
then it follows from \eqref{mp1} that $c_\epsilon>\tau>0$ for any $\epsilon>0.$
We define the Nehari manifold \cite{szukin-weth} associated to $I_\epsilon$ by
$$N_\epsilon:=\{(U,V)\in X_{s,\epsilon}:I^\prime_\epsilon(U,V)(U,V)=0\}.$$

\begin{lemma}\label{nehari}
Assume $(P1)$. If $\lambda<\sqrt{a_0b_0},$  then
\begin{itemize}
\item[\rm (I)]there exists a positive constant $C_0>0$ independent of $\epsilon$ such that
$$||(U,V)||_{X_{s,\epsilon}}\geq C_0\quad \text{for any~} (U,V)\in N_\epsilon.$$
\item[\rm(II)] For any $(U,V)\in N_\epsilon,$  we have
$|supp\ u^+\cap\Lambda_\epsilon|+|supp\ v^+\cap\Lambda_\epsilon|>0.$
\item[\rm(III)] $I_\epsilon$ is bounded from below on $N_\epsilon$ by a positive constant.
\end{itemize}
\end{lemma}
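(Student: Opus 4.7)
All three parts rest on two identities: the Nehari relation $I'_\epsilon(U,V)(U,V)=0$, which reads
\begin{equation*}
\|U\|_{a,\epsilon}^2+\|V\|_{b,\epsilon}^2-2\lambda\int_{\mathbb R^N}uv\,dx=\int_{\mathbb R^N}f_\epsilon(x,u)u\,dx+\int_{\mathbb R^N}g_\epsilon(x,v)v\,dx,
\end{equation*}
and the coercivity bound obtained from \eqref{delta0},
\begin{equation*}
\|U\|_{a,\epsilon}^2+\|V\|_{b,\epsilon}^2-2\lambda\int_{\mathbb R^N}uv\,dx\ge\|U\|_{\chi_s}^2+\|V\|_{\chi_s}^2+\delta_0\int_{\mathbb R^N}\bigl(a(\epsilon x)u^2+b(\epsilon x)v^2\bigr)dx\ge\delta_0\|(U,V)\|_{X_{s,\epsilon}}^2.
\end{equation*}
Everything else is pointwise comparison of $f_\epsilon,g_\epsilon$ against pure power functions, extracted from the definitions \eqref{fandg}--\eqref{gepsilon} and the choice of $\alpha$ in \eqref{alpha}.

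For (I), I first verify pointwise that $f_\epsilon(x,s)s\le(s^+)^{p+1}$ and $g_\epsilon(x,s)s\le(s^+)^{2_s^*}$ for every $(x,s)\in\mathbb R^N\times\mathbb R$: on $\Lambda_\epsilon$ these are equalities by \eqref{arfin}--\eqref{argin}; off $\Lambda_\epsilon$ I split into $s\le\alpha$ (equality again) and $s>\alpha$, the latter handled by $\alpha^{p-1}\le s^{p-1}$. Plugging into the Nehari relation and applying the embeddings $\chi_s\hookrightarrow L^{2_s^*}$ and $\mathcal E\hookrightarrow L^{p+1}$ from Lemma \ref{59} gives
\begin{equation*}
\delta_0\|(U,V)\|_{X_{s,\epsilon}}^2\le C_{p+1}^{-(p+1)/2}\|(U,V)\|_{X_{s,\epsilon}}^{p+1}+S^{-2_s^*/2}\|(U,V)\|_{X_{s,\epsilon}}^{2_s^*},
\end{equation*}
and since $p+1,2_s^*>2$ this forces $\|(U,V)\|_{X_{s,\epsilon}}\ge C_0$ for a constant depending only on $\delta_0,p,N,s,C_{p+1},S$, all of which are $\epsilon$-independent.

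For (II), I argue by contradiction: if $u^+=v^+=0$ a.e.\ on $\Lambda_\epsilon$, then the Nehari right-hand side has only off-$\Lambda_\epsilon$ contributions, which by the choice of $\alpha$ satisfy $f(s)s\le\alpha^{p-1}(s^+)^2\le(\delta_0 a_0/2)s^2\le(\delta_0/2)a(\epsilon x)s^2$ and analogously for $g$. Combining with the coercivity bound yields $\delta_0\|(U,V)\|_{X_{s,\epsilon}}^2\le(\delta_0/2)\|(U,V)\|_{X_{s,\epsilon}}^2$, forcing $(U,V)=0$ and contradicting (I). For (III), the standard manipulation $I_\epsilon(U,V)=I_\epsilon(U,V)-\tfrac{1}{2}I'_\epsilon(U,V)(U,V)$ combined with \eqref{arfin}--\eqref{argin} on $\Lambda_\epsilon$ and \eqref{60} off $\Lambda_\epsilon$ gives
\begin{equation*}
I_\epsilon(U,V)\ge\frac{p-1}{2(p+1)}\int_{\Lambda_\epsilon}(u^+)^{p+1}dx+\frac{2_s^*-2}{2\cdot 2_s^*}\int_{\Lambda_\epsilon}(v^+)^{2_s^*}dx.
\end{equation*}
To extract a positive lower bound, I split the Nehari right-hand side into $\Lambda_\epsilon$ and $\Lambda_\epsilon^c$ parts, bound the latter by $(\delta_0/2)\|(U,V)\|_{X_{s,\epsilon}}^2$ exactly as in (II), and use the coercivity estimate together with (I) to deduce $\int_{\Lambda_\epsilon}(u^+)^{p+1}dx+\int_{\Lambda_\epsilon}(v^+)^{2_s^*}dx\ge(\delta_0/2)C_0^2$. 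I do not anticipate any genuine obstacle: the critical exponent plays no subtle role here because no compactness argument is invoked, and the constants $\alpha$ and $\delta_0$ in \eqref{alpha} and \eqref{58} are tailored precisely to make these pointwise comparisons and absorption steps succeed uniformly in $\epsilon$.
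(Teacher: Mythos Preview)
Your argument is correct, and parts (I) and (II) mirror the paper's proof exactly: the same coercivity bound from \eqref{delta0}, the same pointwise estimates on $f_\epsilon,g_\epsilon$, the same embedding constants, and the same contradiction for (II).

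The only noticeable difference is in part (III). The paper, after establishing the same inequality
\[
I_\epsilon(U,V)\ge\Bigl(\tfrac12-\tfrac{1}{p+1}\Bigr)\int_{\Lambda_\epsilon}(u^+)^{p+1}dx+\Bigl(\tfrac12-\tfrac{1}{2_s^*}\Bigr)\int_{\Lambda_\epsilon}(v^+)^{2_s^*}dx,
\]
proceeds by contradiction: it assumes a sequence $(U_n,V_n)\in N_\epsilon$ with $I_\epsilon(U_n,V_n)\to0$, deduces that the two $\Lambda_\epsilon$-integrals vanish in the limit, then plugs back into the full expression for $I_\epsilon(U_n,V_n)$ (bounding the off-$\Lambda_\epsilon$ parts of $F_\epsilon,G_\epsilon$ by quadratic terms) to force $\|(U_n,V_n)\|_{X_{s,\epsilon}}\to0$, contradicting (I). Your route is more direct: you split the Nehari identity itself into $\Lambda_\epsilon$ and $\Lambda_\epsilon^c$ parts, absorb the latter using the same $\alpha$-bound as in (II), and read off the explicit lower bound $\int_{\Lambda_\epsilon}(u^+)^{p+1}+\int_{\Lambda_\epsilon}(v^+)^{2_s^*}\ge(\delta_0/2)C_0^2$. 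This buys you a concrete constant rather than mere existence, at no extra cost; the paper's contradiction argument is slightly longer but equally valid.
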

\begin{proof}
For any $(U,V)\in N_\epsilon,$ in view of $I^\prime_\epsilon(U,V)(U,V)=0$, we have
 \begin{equation*}
 \begin{aligned}
&\int_{\mathbb R^{N+1}_+} y^{1-2s} |\nabla U|^2 dxdy+\int_{\mathbb R^{N+1}_+} y^{1-2s} |\nabla V|^2 dxdy+\int_{\mathbb R^N}a(\epsilon x)u^2 dx +\int_{\mathbb R^N}b(\epsilon x)v^2 dx\\
 =&\int_{\mathbb R^N}f_\epsilon(x,u)udx +
 \int_{\mathbb R^N}g_\epsilon(x,v)vdx +2\lambda\int_{\mathbb R^N}uvdx.
 \end{aligned}
 \end{equation*}
 It then follows from \eqref{delta0},\eqref{FG} and Lemma \ref{59} that
 $$\delta_0||(U,V)||_{X_{s,\epsilon}}^2 \leq C_{p+1}^{-\frac{p+1}{2}}||(U,V)||_{X_{s,\epsilon}}^{p+1}+ S^{-\frac{2_s^*}{2}}||(U,V)||_{X_{s,\epsilon}}^{2_s^*}.$$
 Consequently, there exists a positive constant $C_0>0$ independent of $\epsilon$ such that
$$||(U,V)||_{X_{s,\epsilon}}\geq C_0.$$

Assume to the contrary that (II) is not true, then there exists $(U,V)\in N_\epsilon$ such that
$$supp\ u^+\cap\Lambda_\epsilon=\emptyset \text{ ~and~ } supp\ v^+\cap\Lambda_\epsilon=\emptyset.$$
This along with $I^\prime_\epsilon(U,V)(U,V)=0,$ \eqref{fepsilon}, \eqref{gepsilon}, \eqref{delta0} and the choice of $\alpha$ shows that
\begin{equation*}
 \begin{aligned}
0=&\int_{\mathbb R^{N+1}_+} y^{1-2s} |\nabla U|^2 dxdy+\int_{\mathbb R^{N+1}_+} y^{1-2s} |\nabla V|^2 dxdy+\int_{\mathbb R^N}a(\epsilon x)u^2 dx +\int_{\mathbb R^N}b(\epsilon x)v^2 dx\\
 &-\int_{\mathbb R^N\setminus\Lambda_\epsilon}f_\epsilon(x,u)udx -
 \int_{\mathbb R^N\setminus\Lambda_\epsilon}g_\epsilon(x,v)vdx -2\lambda\int_{\mathbb R^N}uvdx\\
 \geq&\int_{\mathbb R^{N+1}_+} y^{1-2s} |\nabla U|^2 dxdy+\int_{\mathbb R^{N+1}_+} y^{1-2s} |\nabla V|^2 dxdy+\int_{\mathbb R^N}a(\epsilon x)u^2 dx +\int_{\mathbb R^N}b(\epsilon x)v^2 dx\\
 &-\alpha^{p-1}\int_{\mathbb R^N}u^2 dx-\alpha^{2^*_s-2}\int_{\mathbb R^N}v^2 dx-
 (1-\delta_0)\int_{\mathbb R^N}a(\epsilon x)u^2 dx +(1-\delta_0)\int_{\mathbb R^N}b(\epsilon x)v^2 dx\\
 \geq& \frac{\delta_0}{2}||(U,V)||_{X_{s,\epsilon}}^2.
 \end{aligned}
 \end{equation*}
Then we conclude that $||(U,V)||_{X_{s,\epsilon}}^2=0,$ which contradicts (I). Therefore (II) holds.

 For any $(U,V)\in N_\epsilon,$  it follows from $I_\epsilon^\prime(U,V)(U,V)=0$ and \eqref{60} that
 \begin{equation}\label{1}
 \begin{aligned}
 I_\epsilon(U,V)=&I_\epsilon(U,V)-\frac{1}{2}I_\epsilon^\prime(U,V)(U,V)\\
 =&\left(\frac{1}{2}-\frac{1}{p+1}\right)\int_{\Lambda_\epsilon}(u^+)^{p+1}dx+
 \left(\frac{1}{2}-\frac{1}{2_s^*}\right)\int_{\Lambda_\epsilon}(v^+)^{2_s^*}dx\\
 &+\int_{\mathbb R^N\setminus\Lambda_\epsilon}\frac{1}{2}f_\epsilon(x,u)u-F_\epsilon(x,u)dx+\int_{\mathbb R^N\setminus\Lambda_\epsilon}\frac{1}{2}g_\epsilon(x,v)v-G_\epsilon(x,v)dx\\
 \geq&\left(\frac{1}{2}-\frac{1}{p+1}\right)\int_{\Lambda_\epsilon}(u^+)^{p+1}dx+
 \left(\frac{1}{2}-\frac{1}{2_s^*}\right)\int_{\Lambda_\epsilon}(v^+)^{2_s^*}dx.
 \end{aligned}
 \end{equation}
We then from  (II) obtain  that $I_\epsilon(U,V)>0$ for any $(U,V)\in N_\epsilon.$ Now we suppose to the contrary
 that there exists a sequence $\{(U_n,V_n)\}\subset N_\epsilon$ such that
 $I_\epsilon(U_n,V_n)\to 0$ \text{as} $n\to\infty,$
 then it follows from \eqref{1} that
 $$\lim_{n\to\infty}\int_{\Lambda_\epsilon}(u_n^+)^{p+1}dx=0,\quad
 \lim_{n\to\infty}\int_{\Lambda_\epsilon}(v_n^+)^{2_s^*}dx=0.$$
 Combining with \eqref{Iepsilon} \eqref{fepsilon} and \eqref{gepsilon}, we have
 \begin{equation*}
 \begin{aligned}
 o_n(1)=&\frac{1}{2}\int_{\mathbb R^{N+1}_+} y^{1-2s} |\nabla U_n|^2 dxdy+\frac{1}{2}\int_{\mathbb R^{N+1}_+} y^{1-2s} |\nabla V_n|^2 dxdy+\frac{1}{2}\int_{\mathbb R^N}a(\epsilon x)u_n^2 dx \\
 &+\frac{1}{2}\int_{\mathbb R^N}b(\epsilon x)v_n^2 dx-\int_{\mathbb R^N\setminus\Lambda_\epsilon}F_\epsilon(x,u_n)dx -
 \int_{\mathbb R^N\setminus\Lambda_\epsilon}G_\epsilon(x,v_n)dx -\lambda\int_{\mathbb R^N}u_nv_ndx\\
 \geq&\frac{\delta_0}{4}||(U_n,V_n)||_{X_{s,\epsilon}}^2.
 \end{aligned}
 \end{equation*}
Hence $||(U_n,V_n)||_{X_{s,\epsilon}}\to 0$ as $n\to \infty,$ which contradicts (I). The proof is complete.
\end{proof}

 Since the trace $(u,v)$ of  any  solution $(U,V)$ of \eqref{mp} satisfies  (II) in Lemma \ref{nehari},  we define
$$X_{s,\epsilon}^+=\{(U,V)\in X_{s,\epsilon}:|supp\ u^+\cap\Lambda_\epsilon|+|supp\ v^+\cap\Lambda_\epsilon|>0\},$$
and $S_{s,\epsilon}^+$  as the intersect of $X_{s,\epsilon}^+$ with the unit sphere $S_{s,\epsilon},$ namely,
$S_{s,\epsilon}^+=S_{s,\epsilon}\cap X_{s,\epsilon}^+.$

\begin{lemma}\label{Xepsilon}
Assume $(P1)$. If $\lambda<\sqrt{a_0b_0},$ then
\begin{itemize}
\item[\rm(I)] for any $(U,V)\in X_{s,\epsilon}^+,$ if we define $h_{UV}(t)=I_\epsilon(tU,tV),$
then $h_{UV}\in C^1(0,\infty) $ and there exists a unique $t_{UV}>0$
such that
$$h_{UV}(t_{UV})=\max_{t>0}h_{UV}(t).$$
Moreover, $h_{UV}^\prime(t)>0$ for any $t\in (0,t_{UV}),$ and $h_{UV}^\prime(t)<0$ for any $t\in (t_{UV}, \infty).$
\item[\rm(II)] $(t_{UV}U,t_{UV}V)\in N_\epsilon.$ Moreover if $\mathcal{W}\subset X_{s,\epsilon}^+ $ is bounded and there is $\theta>0$ such that $||(U,V)||_{X_{s,\epsilon}}>\theta$ for any $(U,V)\in \mathcal{W}$ , then there exists a positive constant $c_\mathcal{W}$ independent of $\epsilon$ such that $t_{UV}\geq c_\mathcal{W}>0.$
 In addition, if $\mathcal{W} $ is compact, then there is a positive constant $C_\mathcal{W}$ such that
 $t_{UV}\leq C_\mathcal{W}.$
\item[\rm(III)] Set $\tau_\epsilon : (U,V)\mapsto (t_{UV}U,t_{UV}V),$ then $\tau_\epsilon$ is continuous from $X_{s,\epsilon}^+$ to $N_\epsilon$, and  $\overline{\tau_\epsilon}:=\tau_\epsilon|_{S_{s,\epsilon}^+}$is a homomorphism between $S_{s,\epsilon}^+$ and $N_\epsilon.$
\end{itemize}
\end{lemma}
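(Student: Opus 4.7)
The plan is to reduce everything to a scalar equation. A direct calculation of $h_{UV}(t)=I_\epsilon(tU,tV)$ gives
\begin{equation*}
h'_{UV}(t)=tA_{UV}-\int_{\mathbb R^N}f_\epsilon(x,tu)u\,dx-\int_{\mathbb R^N}g_\epsilon(x,tv)v\,dx,\qquad A_{UV}:=\|(U,V)\|_{X_{s,\epsilon}}^2-2\lambda\int_{\mathbb R^N}uv\,dx,
\end{equation*}
and \eqref{delta0} yields $A_{UV}\geq\delta_0\|(U,V)\|_{X_{s,\epsilon}}^2>0$. For $t>0$, the equation $h'_{UV}(t)=0$ is equivalent to $\phi(t)=A_{UV}$, where $\phi(t):=t^{-1}\int f_\epsilon(x,tu)u\,dx+t^{-1}\int g_\epsilon(x,tv)v\,dx$. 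I will split the integrands into their $\Lambda_\epsilon$ and $\Lambda_\epsilon^c$ parts. On $\Lambda_\epsilon$ they become $t^{p-1}(u^+)^{p+1}$ and $t^{2_s^*-2}(v^+)^{2_s^*}$, strictly increasing in $t$; since $(U,V)\in X_{s,\epsilon}^+$ at least one of $\int_{\Lambda_\epsilon}(u^+)^{p+1}$, $\int_{\Lambda_\epsilon}(v^+)^{2_s^*}$ is positive, so this part of $\phi$ is strictly increasing and diverges to $+\infty$. On $\Lambda_\epsilon^c$, the explicit formulas in \eqref{fandg} give pointwise functions that are continuous, non-decreasing in $t$, vanish at $t=0^+$, and saturate at $\alpha^{p-1}(u^+)^2+\alpha^{2_s^*-2}(v^+)^2$ for large $t$. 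Consequently, $\phi:(0,\infty)\to(0,\infty)$ is continuous, strictly increasing and surjective, producing the unique $t_{UV}$ in (I); the sign of $h'_{UV}(t)=t(A_{UV}-\phi(t))$ on either side of $t_{UV}$ is then immediate.

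For (II), the identity $h'_{UV}(t_{UV})=0$ is precisely $I'_\epsilon(t_{UV}U,t_{UV}V)(t_{UV}U,t_{UV}V)=0$, so $(t_{UV}U,t_{UV}V)\in N_\epsilon$. The lower bound follows from Lemma \ref{nehari}(I), which gives $t_{UV}\|(U,V)\|_{X_{s,\epsilon}}\geq C_0$ and hence $t_{UV}\geq C_0/\sup_{\mathcal W}\|(U,V)\|_{X_{s,\epsilon}}=:c_{\mathcal W}$, independent of $\epsilon$ because $C_0$ is. For the upper bound on compact $\mathcal W$, suppose by contradiction $t_{U_nV_n}\to\infty$ along some sequence; extract a subsequence with $(U_n,V_n)\to(U,V)\in\mathcal W\subset X_{s,\epsilon}^+$. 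Combining $\phi_n(t_{U_nV_n})=A_{U_nV_n}$ with the lower estimate
\begin{equation*}
\phi_n(t)\geq t^{p-1}\!\int_{\Lambda_\epsilon}(u_n^+)^{p+1}\,dx+t^{2_s^*-2}\!\int_{\Lambda_\epsilon}(v_n^+)^{2_s^*}\,dx,
\end{equation*}
and using Lemma \ref{59}(I) to pass $u_n\to u$ in $L^{p+1}(\mathbb R^N)$ and $v_n\to v$ in $L^{2_s^*}(\mathbb R^N)$ (which follows because $X_{s,\epsilon}$-convergence implies $\mathcal E$-convergence of each component, since $a,b\geq a_0,b_0>0$), the limiting integrals are positive for at least one term while $A_{U_nV_n}$ stays bounded, a contradiction.

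For (III), continuity of $\tau_\epsilon$ reduces to showing $t_{U_nV_n}\to t_{UV}$ whenever $(U_n,V_n)\to(U,V)$ in $X_{s,\epsilon}^+$. The compact set $\{(U_n,V_n):n\in\mathbb N\}\cup\{(U,V)\}\subset X_{s,\epsilon}^+$ confines $\{t_{U_nV_n}\}$ to a compact subinterval of $(0,\infty)$ by (II); any subsequential limit $t^*$ satisfies $\phi_{UV}(t^*)=A_{UV}$ by dominated convergence (strong $L^{p+1},L^{2_s^*}$ convergence of the traces on $\Lambda_\epsilon$, and Lipschitz control of $f,g$ together with $L^2$ convergence on $\Lambda_\epsilon^c$), and uniqueness in (I) forces $t^*=t_{UV}$. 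The candidate inverse of $\overline{\tau_\epsilon}$ is the normalization $\sigma(W,Z)=(W,Z)/\|(W,Z)\|_{X_{s,\epsilon}}$: Lemma \ref{nehari}(I) bounds the denominator below by $C_0$, so $\sigma$ is continuous from $N_\epsilon$ into $S_{s,\epsilon}$, and Lemma \ref{nehari}(II) places its image in $S_{s,\epsilon}^+$. Writing any $(W,Z)\in N_\epsilon$ as $\|(W,Z)\|\sigma(W,Z)$ and invoking the uniqueness from (I) gives $\sigma\circ\overline{\tau_\epsilon}=\mathrm{id}$ and $\overline{\tau_\epsilon}\circ\sigma=\mathrm{id}$, completing the homeomorphism.

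The hardest part will be ensuring $\phi$ genuinely blows up at infinity and identifying subsequential limits of $\{t_{U_nV_n}\}$ in (II) and (III). The truncation \eqref{fandg} flattens $f,g$ to linear growth outside $\Lambda_\epsilon$, so the $\Lambda_\epsilon^c$-contribution to $\phi$ is only bounded; the entire analysis rests on exploiting $(U,V)\in X_{s,\epsilon}^+$ to extract a strictly positive, strictly increasing, unbounded contribution from $\Lambda_\epsilon$, and this same constraint must be carried through each compactness passage to prevent the limiting nonlinear integrals over $\Lambda_\epsilon$ from degenerating.
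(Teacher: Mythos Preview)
Your proof is correct and follows essentially the same route as the paper: the monotonicity of $s\mapsto f_\epsilon(x,s)/s$ and $g_\epsilon(x,s)/s$ (strict on $\Lambda_\epsilon$) for the uniqueness in (I), a contradiction argument on compact $\mathcal W$ for the upper bound in (II), and continuity of $t_{UV}$ plus the normalization map as inverse for (III). The only tactical differences are that you obtain the lower bound $t_{UV}\geq c_{\mathcal W}$ directly from Lemma~\ref{nehari}(I), whereas the paper redoes the Sobolev estimate, and your upper-bound contradiction runs through the scalar equation $\phi_n(t_n)=A_{U_nV_n}$ while the paper instead shows $I_\epsilon(t_nU_n,t_nV_n)\to-\infty$ against Lemma~\ref{nehari}(III); both are equally valid and equally short.
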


\begin{proof}
For any $(U,V)\in X_{s,\epsilon}^+,$ by the definition of $h_{UV}(t)$ and straight computations, we have
\begin{equation}\label{2}
\begin{aligned}
h^\prime_{UV}(t)=&t\int_{\mathbb R^{N+1}_+} y^{1-2s} |\nabla U|^2 dxdy+t\int_{\mathbb R^{N+1}_+} y^{1-2s} |\nabla V|^2 dxdy+t\int_{\mathbb R^N}a(\epsilon x)u^2 dx\\
& +t\int_{\mathbb R^N}b(\epsilon x)v^2 dx-\int_{\mathbb R^N}f_\epsilon(x,tu)udx -\int_{\mathbb R^N}g_\epsilon(x,tv)vdx -2\lambda t\int_{\mathbb R^N}uvdx\\
\geq&t\frac{\delta_0}{2}||(U,V)||_{X_{s,\epsilon}}^2- t^pC_{p+1}^{-\frac{p+1}{2}}||(U,V)||_{X_{s,\epsilon}}^{p+1}- t^{2_s^*-1}S^{-\frac{2_s^*}{2}}||(U,V)||_{X_{s,\epsilon}}^{2_s^*},
\end{aligned}
\end{equation}
which implies that $h_{UV}\in C^1(0,\infty)$ and $ h^\prime_{UV}(t)>0$ for small $t>0.$ On the other hand,
\begin{equation*}
\begin{aligned}
h^\prime_{UV}(t)\leq&t\int_{\mathbb R^{N+1}_+} y^{1-2s} |\nabla U|^2 dxdy+t\int_{\mathbb R^{N+1}_+} y^{1-2s} |\nabla V|^2 dxdy+t\int_{\mathbb R^N}a(\epsilon x)u^2 dx\\
& +t\int_{\mathbb R^N}b(\epsilon x)v^2 dx-{t^{p}}\int_{\Lambda_\epsilon}(u^+)^{p+1}dx -{t^{2_s^*-1}}\int_{\Lambda_\epsilon}(v^+)^{2_s^*}dx -2\lambda t\int_{\mathbb R^N}uvdx\\
\to &-\infty  \quad\text{as~} t\to \infty,
\end{aligned}
\end{equation*}
we further from \eqref{2} and $h_{UV}(0)=0$ conclude  that there exists   $t_{UV}>0$
such that
$$h_{UV}(t_{UV})=\max_{t>0}h_{UV}(t).$$
To prove (I),  we only need to prove that the equation  $h^\prime_{UV}(t)=0$ has a unique  solution in $(0,\infty).$
In terms of  $h^\prime_{UV}(t)=0,$  we have

\begin{equation}\label{61}
\begin{aligned}
&\int_{\mathbb R^{N+1}_+} y^{1-2s} |\nabla U|^2 dxdy+\int_{\mathbb R^{N+1}_+} y^{1-2s} |\nabla V|^2 dxdy+\int_{\mathbb R^N}a(\epsilon x)u^2 dx\\&+\int_{\mathbb R^N}b(\epsilon x)v^2 dx+2\lambda \int_{\mathbb R^N}uvdx\\
=& \int_{\mathbb R^N}\frac{f_\epsilon(x,tu)udx}{t} +\int_{\mathbb R^N}\frac{g_\epsilon(x,tv)v}{t}dx .\\
\end{aligned}
\end{equation}
It follows from the definitions of $f_\epsilon$ and $g_\epsilon$ that $f_\epsilon(x,s)/s $ and $g_\epsilon(x,s)/s $
 are nondecreasing with respect to $s$ in $(0,\infty)$ for any fixed $x\in\mathbb R^N.$ Moreover, if $x\in \Lambda_\epsilon$ then
 $f_\epsilon(x,s)/s $ and $g_\epsilon(x,s)/s $
 are strictly increasing with respect to $s$  in $(0,\infty).$ As a result, there exists a unique $t_{UV}>0$ such  that \eqref{61} holds.
 Namely, we have shown  (I).

In view of the equality in \eqref{2} and a direct calculation, it follows that
$$I^\prime_\epsilon(t_{UV}U,t_{UV}V)(t_{UV}U,t_{UV}V)=t_{UV}h^\prime(t_{UV})=0,$$
that is, $(t_{UV}U,t_{UV}V)\in N_\epsilon.$ Let $(U,V)\in\mathcal{W},$ then by a similar estimate to \eqref{2}, we have
$$t_{UV}^2\frac{\delta_0\theta^2}{2}\leq t_{UV}^2\frac{\delta_0}{2}||(U,V)||_{X_{s,\epsilon}}^2\leq t_{UV}^{p+1}C_{p+1}^{-\frac{p+1}{2}}||(U,V)||_{X_{s,\epsilon}}^{p+1}+ t_{UV}^{2_s^*}S^{-\frac{2_s^*}{2}}||(U,V)||_{X_{s,\epsilon}}^{2_s^*}.$$
As a result, there exists a positive constant $c_\mathcal{W}$ only depending on the uniformly upper and below bounds of the norm of elements in $\mathcal{W}$   such that $t_{UV}\geq c_\mathcal{W}>0.$  Assume that there exists a sequence $\{(U_n,V_n)\}\subset\mathcal{W} $
such that $t_{U_nV_n}\to \infty$ as $n\to\infty.$ We denote $t_{U_nV_n}$ by $t_n$ for convenience.  Since $\mathcal{W}$  is compact,  there exist a subsequence denoted still by $\{(U_n,V_n)\}$ and  positive constants $c_1,c_2$ and $N_0$ such that  for any $n>N_0$, there hold
$$\int_{\Lambda_\epsilon}(u_n^+)^{p+1}dx>c_1\text{~~or~~} \int_{\Lambda_\epsilon}(v_n^+)^{2_s^*}dx>c_2. $$
We assume without loss of generality that the former one holds. Observing that
\begin{equation}\label{64}
\begin{aligned}
I_\epsilon(t_nU_n,t_nV_n)\leq&\frac{ t_n^2}{2}\int_{\mathbb R^{N+1}_+} y^{1-2s} |\nabla U_n|^2 dxdy+\frac{ t_n^2}{2}\int_{\mathbb R^{N+1}_+} y^{1-2s} |\nabla V_n|^2 dxdy+\frac{ t_n^2}{2}\int_{\mathbb R^N}a(\epsilon x)u_n^2 dx\\
& +\frac{ t_n^2}{2}\int_{\mathbb R^N}b(\epsilon x)v_n^2 dx-{\frac{ t_n^{p+1}}{p+1}}\int_{\Lambda_\epsilon}(u_n^+)^pdx -{\frac{ t_n^{2_s^*}}{2_s^*}}\int_{\Lambda_\epsilon}(v_n^+)^{2_s^*-1}dx -\lambda t_n^2\int_{\mathbb R^N}u_nv_ndx\\
\leq &t_n^2||(U_n,V_n)||_{X_{s,\epsilon}}-{\frac{ t_n^{p+1}}{p+1}}\int_{\Lambda_\epsilon}(u_n^+)^pdx -{\frac{ t_n^{2_s^*}}{2_s^*}}\int_{\Lambda_\epsilon}(v_n^+)^{2_s^*-1}dx\\
\leq&t_n^2\sup_{n}||(U_n,V_n)||_{X_{s,\epsilon}}-{\frac{ t_n^{p+1}}{p+1}}c_1 \\
\to& -\infty.
\end{aligned}
\end{equation}
This contradicts (III) in Lemma \ref{nehari}.
Consequently, (II) holds.

Suppose that there exist $\{(U_n,V_n)\}\subset X_{s,\epsilon}^+$ and $(U,V)\in X_{s,\epsilon}^+$ such that
$(U_n,V_n)\to(U,V)$ \text{in} $X_{s,\epsilon}^+.$
To verify the continuity of $\tau_\epsilon,$ we only need to prove $\mathop{\lim}\limits_{n\to\infty}t_{U_nV_n}=t_{UV}.$
Thanks to (II),  there exists $t_0>0$ such that, up to a subsequence,
$\mathop{\lim}\limits_{n\to\infty}t_{U_nV_n}=t_0.$
Let $n$ tends to infinity in the equality $I^\prime_\epsilon(t_{U_nV_n}U_n,t_{U_nV_n}V_n)(t_{U_nV_n}U_n,t_{U_nV_n}V_n)=0$,
we have
\begin{equation*}
\begin{aligned}
 0=&\frac{t_0^2}{2}\int_{\mathbb R^{N+1}_+} y^{1-2s} |\nabla U|^2 dxdy+\frac{t_0^2}{2}\int_{\mathbb R^{N+1}_+} y^{1-2s} |\nabla V|^2 dxdy+\frac{t_0^2}{2}\int_{\mathbb R^N}a(\epsilon x)u^2 dx \\
 &+\frac{t_0^2}{2}\int_{\mathbb R^N}b(\epsilon x)v^2 dx-\int_{\mathbb R^N}f_\epsilon(x,t_0u)t_0udx -
 \int_{\mathbb R^N}g_\epsilon(x,t_0v)t_0vdx -\lambda t_0^2\int_{\mathbb R^N}uvdx.
 \end{aligned}
\end{equation*}
Hence $h_{UV}^\prime(t_0)=0.$ By virtue of (I), there holds $t_{UV}=t_0.$ Therefore, $\tau_\epsilon$ is continuous.

  Define
$\varsigma_\epsilon:N_\epsilon\to S_{s,\epsilon}^+$ as
$$\varsigma_\epsilon(U,V)=\frac{(U,V)}{||(U,V)||_{X_{s,\epsilon}}}\quad\text{for any~} (U,V)\in N_\epsilon.$$
Then $\varsigma$ is well defined thanks to (II) in Lemma \ref{nehari}, and for any $(U,V)\in S_{s,\epsilon}^+$, we have
$$\varsigma\left(\overline{\tau_\epsilon}(U,V)\right)=\frac{(t_{UV}U,t_{UV}V)}{||(t_{UV}U,t_{UV}V)||_{X_{s,\epsilon}}}=
(U,V).$$
 Therefore $\overline{\tau_\epsilon}^{-1}=\varsigma_\epsilon.$ Namely, $\overline{\tau_\epsilon}$ is a homomorphism between $S_{s,\epsilon}^+$ and $N_\epsilon.$ The proof is complete.
\end{proof}

\begin{lemma}\label{eeq}
Assume $(P1)$. If $\lambda<\sqrt{a_0b_0},$ then
\begin{equation*}
\inf_{(U,V)\in N_\epsilon}I_\epsilon(U,V)
=\inf_{(U,V)\in X_{s,\epsilon}^+}\max_{t>0}I_\epsilon(tU,tV)=\inf_{\gamma\in\Gamma}\max_{t\in[0,1]}I_\epsilon(\gamma(t))=c_\epsilon.
\end{equation*}
\end{lemma}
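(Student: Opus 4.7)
The plan is to prove a cycle of inequalities among the three quantities: writing $m_N := \inf_{N_\epsilon}I_\epsilon$ and $m_* := \inf_{(U,V)\in X_{s,\epsilon}^+}\max_{t>0}I_\epsilon(tU,tV)$, I will establish $m_N = m_*$, then $c_\epsilon \leq m_*$, and finally $c_\epsilon \geq m_N$, which together force all three values to agree.

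The identity $m_N = m_*$ is a direct consequence of material already in place. For $(U,V)\in X_{s,\epsilon}^+$, Lemma \ref{Xepsilon}(I)--(II) furnishes $t_{UV}>0$ with $(t_{UV}U,t_{UV}V)\in N_\epsilon$ and $I_\epsilon(t_{UV}U,t_{UV}V)=\max_{t>0}I_\epsilon(tU,tV)$, giving $m_N\leq m_*$. Conversely, any $(U,V)\in N_\epsilon$ lies in $X_{s,\epsilon}^+$ by Lemma \ref{nehari}(II), and since $h'_{UV}(1)=I'_\epsilon(U,V)(U,V)=0$ the uniqueness part of Lemma \ref{Xepsilon}(I) forces $t_{UV}=1$, so $I_\epsilon(U,V)=\max_{t>0}I_\epsilon(tU,tV)\geq m_*$.

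For $c_\epsilon\leq m_*$, given $(U,V)\in X_{s,\epsilon}^+$ I choose $T>t_{UV}$ so large that $I_\epsilon(TU,TV)<0$, which is possible because $h_{UV}(t)\to -\infty$ by Lemma \ref{Xepsilon}(I) and the computation in \eqref{64}. Then $\gamma(t):=(tTU,tTV)$ belongs to $\Gamma$, and since $h_{UV}$ is strictly increasing on $(0,t_{UV})$ and strictly decreasing on $(t_{UV},\infty)$, one has $\max_{t\in[0,1]}I_\epsilon(\gamma(t))=\max_{s>0}I_\epsilon(sU,sV)$. Taking the infimum over $(U,V)\in X_{s,\epsilon}^+$ yields $c_\epsilon \leq m_*$.

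The bulk of the work sits in $c_\epsilon \geq m_N$, for which I plan to show that every $\gamma\in\Gamma$ intersects $N_\epsilon$ via an intermediate value argument on the continuous function $\phi(t):=I'_\epsilon(\gamma(t))(\gamma(t))$. Using \eqref{60}, one checks $I_\epsilon(W)-\tfrac{1}{2}I'_\epsilon(W)(W)\geq 0$ pointwise, so $\phi(1)\leq 2I_\epsilon(\gamma(1))<0$. On the other hand, the estimate behind \eqref{mp1} gives $\phi(U,V)\geq \delta_0 r^2-C_1 r^{p+1}-C_2 r^{2_s^*}>0$ whenever $\|(U,V)\|_{X_{s,\epsilon}}=r$ for the mountain-pass radius $r$ from Lemma \ref{mmp}; since $\gamma(0)=0$ and $\gamma(1)\neq 0$, continuity of the norm produces some $t_1\in(0,1)$ with $\|\gamma(t_1)\|_{X_{s,\epsilon}}=r$ and thus $\phi(t_1)>0$. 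Setting $t^*:=\inf\{t\in[0,1]:\phi(s)<0 \text{ for every } s\in(t,1]\}$ delivers a zero of $\phi$ with $t_1\leq t^*<1$, and one rules out $\gamma(t^*)=0$: otherwise continuity of $\gamma$ together with $\gamma(1)\neq 0$ would yield some $t'\in(t^*,1)$ with $\|\gamma(t')\|_{X_{s,\epsilon}}=r$, forcing $\phi(t')>0$ and contradicting $\phi|_{(t^*,1]}<0$. Hence $\gamma(t^*)\in N_\epsilon$ and $\max_{t\in[0,1]}I_\epsilon(\gamma(t))\geq I_\epsilon(\gamma(t^*))\geq m_N$. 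The main obstacle is precisely this last step, where one must rule out the trivial zero of $\phi$ at the origin; the mountain-pass radius from Lemma \ref{mmp} is exactly the tool that prevents the path from approaching the origin at the critical time $t^*$.
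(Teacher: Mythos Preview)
Your proposal is correct and follows essentially the same route as the paper: the equality $m_N=m_*$ via Lemma~\ref{Xepsilon}, the inequality $c_\epsilon\le m_*$ by rescaling a ray into a path in $\Gamma$, and the inequality $c_\epsilon\ge m_N$ by an intermediate-value argument on $\phi(t)=I'_\epsilon(\gamma(t))(\gamma(t))$. The only differences are cosmetic: the paper deduces $\phi(1)<0$ from $\gamma(1)\in X_{s,\epsilon}^+$ and $t_{\gamma(1)}<1$ (via the monotonicity in Lemma~\ref{Xepsilon}(I)), whereas you use the pointwise inequality $I_\epsilon-\tfrac12 I'_\epsilon(\cdot)(\cdot)\ge0$ from \eqref{60}; and you are more explicit than the paper in ruling out the trivial zero $\gamma(t^*)=0$, which the paper glosses over.
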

\begin{proof}
The first equality is a direct result of  (I) and (II) of Lemma \ref{Xepsilon}. We only prove the second one.
In fact, for any $(U,V)\in X_{s,\epsilon}^+$ there exists $T>t_{UV}$ such that $I_\epsilon(TU,TV)<0.$ We define
$\gamma(t)=(tTU,tTV)$ for any $t\in [0,1].$ Then $\gamma\in\Gamma$  and
$$\max_{t\in [0,1]}I_\epsilon(\gamma(t))=\max_{t\in [0,1]}I_\epsilon(tTU,tTU)=I_\epsilon(t_{UV}U,t_{UV}V)
=\max_{t>0}I_\epsilon(tU,tV).$$
It then follows that
\begin{equation}\label{62}
\inf_{\gamma\in\Gamma}\max_{t\in[0,1]}I_\epsilon(\gamma(t))\leq\inf_{(U,V)\in X_{s,\epsilon}^+}\max_{t>0}I_\epsilon(tU,tV).
\end{equation}
On the other hand,
note that  for any $\gamma\in\Gamma,$ $I^\prime_\epsilon(\gamma(s))\gamma(s)>0$ if $s$ sufficiently  small.  Moreover,  it follows from $I_\epsilon(\gamma(1))<0$
that $\gamma(1)\in X_{s,\epsilon}^+$ and $t_{\gamma(1)}<1,$ hence $I_\epsilon^\prime(\gamma(1))(\gamma(1))<0.$ We then drive that there exists $s_0\in (0,1)$ such that $I_\epsilon^\prime(\gamma(s_0))(\gamma(s_0))=0.$ Namely, $\gamma(s_0)\in N_\epsilon.$
 As a result, we have
 $$\max_{t\in[0,1]}I_\epsilon(\gamma(t))\geq I_\epsilon(\gamma(s_0))\geq\inf_{(U,V)\in N_\epsilon}I_\epsilon(U,V),$$ which together with \eqref{62} completes the proof.
\end{proof}

Now we define a function $\Phi_\epsilon:X_{s,\epsilon}^+\to \mathbb R$ as
\begin{equation}\label{phi}
\Phi_\epsilon(U,V)=I_\epsilon(\tau_\epsilon(U,V))\quad \text{for any } (U,V)\in X_{s,\epsilon}^+,
\end{equation}
where $\tau_\epsilon$ is defined in Lemma \ref{Xepsilon}. Moreover, set $\overline{\Phi}_\epsilon={\Phi_\epsilon}|_{S_{s,\epsilon}^+}.$
Then by a similar discussion to \cite[Corollary 2.3]{szukin-weth}, we have the following results.

\begin{lemma}\label{sphere}
Assume $(P1)$. If $\lambda<\sqrt{a_0b_0},$  then
\begin{itemize}
\item[\rm (I)] $\Phi_\epsilon\in C^1(X_{s,\epsilon}^+)$ and
$$\Phi_\epsilon^\prime(U,V)(W,Z)=\frac{||\tau_\epsilon(U,V)||_{X_{s,\epsilon}}}{||(U,V)||_{X_{s,\epsilon}}}
I^\prime(\tau_\epsilon(U,V))(W,Z),~~~ \forall~(U,V)\in X_{s,\epsilon}^+,~ (W,Z)\in X_{s,\epsilon}. $$
\item [\rm (II)] $\overline{\Phi}_\epsilon\in  C^1(S_{s,\epsilon}^+)$ and
$$\overline{\Phi}_\epsilon^\prime(U,V)(W,Z)=||\overline{\tau}_\epsilon(U,V)||_{X_{s,\epsilon}}
I^\prime(\overline{\tau}_\epsilon(U,V))(W,Z),~~~ \forall~(U,V)\in S_{s,\epsilon}^+,~ (W,Z)\in T_{UV}S_{s,\epsilon}^+,$$
where $$ T_{UV}S_{s,\epsilon}^+:=\{(W,Z)\in{X_{s,\epsilon}}:\left<(U,V),(W,Z)\right>_{X_{s,\epsilon}} =0\}.$$
\item[\rm (III)] If $\{(U_n,V_n)\}\subset S_{s,\epsilon}^+$ is a $(PS)_d$ sequence of $\overline{\Phi}_\epsilon$, then
$\{\overline{\tau}_\epsilon(U_n,V_n)\}$ is a $(PS)_d$ sequence of $I_\epsilon.$ Moreover, if  $\{(U_n,V_n)\}\subset N_\epsilon$
is a bounded $(PS)_d$ sequence of $I_\epsilon,$ then $\overline{\tau}_\epsilon^{-1}(U_n,V_n)$ is a  $(PS)_d$ sequence of $\overline{\Phi}_\epsilon$.
\item[\rm (IV)] $(U,V)\in  S_{s,\epsilon}^+$ is a critical point of $\overline{\Phi}_\epsilon$ if and only if $\overline{\tau}_\epsilon(U,V)$ is a
critical point of $I_\epsilon.$ Moreover, the critical values coincide and
$$\inf_{S_\epsilon^+}\overline{\Phi}_\epsilon=\inf_{N_\epsilon} I_\epsilon.$$
\end{itemize}
\end{lemma}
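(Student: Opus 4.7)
The plan is to adapt the abstract Szulkin--Weth framework for the Nehari manifold to our setting, leveraging the homeomorphism property of $\tau_\epsilon$ and $\overline{\tau}_\epsilon$ already established in Lemma~\ref{Xepsilon}.

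For (I), the first step is to verify that $(U,V)\mapsto t_{UV}$ is of class $C^1$ on $X_{s,\epsilon}^+$. By Lemma~\ref{Xepsilon}(I), $t_{UV}$ is the unique zero in $(0,\infty)$ of
$$F(t,(U,V)):=I_\epsilon^\prime(tU,tV)(U,V).$$
The strict monotonicity of $s\mapsto f_\epsilon(x,s)/s$ and $s\mapsto g_\epsilon(x,s)/s$ on $\Lambda_\epsilon$, combined with Lemma~\ref{nehari}(II) (which ensures the positive supports of $u$ and $v$ meet $\Lambda_\epsilon$ in positive measure), will force $\partial_t F(t_{UV},(U,V))<0$, so the implicit function theorem yields the desired $C^1$ regularity. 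For the derivative formula, the chain rule for $\Phi_\epsilon=I_\epsilon\circ\tau_\epsilon$ produces a term proportional to $I_\epsilon^\prime(\tau_\epsilon(U,V))(U,V)$ coming from the variation of $t_{UV}$; since $\tau_\epsilon(U,V)\in N_\epsilon$ gives $I_\epsilon^\prime(\tau_\epsilon(U,V))(\tau_\epsilon(U,V))=0$ and $t_{UV}>0$, that term vanishes, leaving $t_{UV}\,I_\epsilon^\prime(\tau_\epsilon(U,V))(W,Z)$, and substituting $t_{UV}=||\tau_\epsilon(U,V)||_{X_{s,\epsilon}}/||(U,V)||_{X_{s,\epsilon}}$ gives the claimed identity. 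Part (II) then follows by restricting to $S_{s,\epsilon}^+$ and to tangent vectors $(W,Z)\in T_{UV}S_{s,\epsilon}^+$.

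For (III), given a $(PS)_d$-sequence $\{(U_n,V_n)\}\subset S_{s,\epsilon}^+$ of $\overline{\Phi}_\epsilon$, I set $(\widetilde{U}_n,\widetilde{V}_n):=\overline{\tau}_\epsilon(U_n,V_n)\in N_\epsilon$, so $I_\epsilon(\widetilde{U}_n,\widetilde{V}_n)=\overline{\Phi}_\epsilon(U_n,V_n)\to d$. Any $(W,Z)\in X_{s,\epsilon}$ decomposes as a tangential component to $S_{s,\epsilon}^+$ at $(U_n,V_n)$ plus a scalar multiple of $(U_n,V_n)$; the radial component contributes nothing to $I_\epsilon^\prime(\widetilde{U}_n,\widetilde{V}_n)$ by the Nehari identity, while the tangential component is controlled via (II) together with an upper bound on $t_{U_nV_n}=||(\widetilde{U}_n,\widetilde{V}_n)||_{X_{s,\epsilon}}$ (finite because $\overline{\Phi}_\epsilon(U_n,V_n)$ is bounded and the proof of Lemma~\ref{nehari}(III) yields a coercive lower bound on $I_\epsilon|_{N_\epsilon}$). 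This gives $||I_\epsilon^\prime(\widetilde{U}_n,\widetilde{V}_n)||_{X_{s,\epsilon}^*}\to 0$, and the converse direction follows analogously via the continuous inverse $\overline{\tau}_\epsilon^{-1}=\varsigma_\epsilon$. Part (IV) is then immediate: if $(U,V)\in S_{s,\epsilon}^+$ is critical for $\overline{\Phi}_\epsilon$, (II) shows $I_\epsilon^\prime(\overline{\tau}_\epsilon(U,V))$ annihilates $T_{UV}S_{s,\epsilon}^+$, and the radial direction is handled by the Nehari identity, so $I_\epsilon^\prime(\overline{\tau}_\epsilon(U,V))=0$ on all of $X_{s,\epsilon}$. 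The equality $\inf_{S_{s,\epsilon}^+}\overline{\Phi}_\epsilon=\inf_{N_\epsilon}I_\epsilon$ then follows from Lemma~\ref{eeq}.

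The main obstacle will be the $C^1$ regularity of $(U,V)\mapsto t_{UV}$: the classical Szulkin--Weth hypotheses assume a smooth nonlinearity with $f(s)/s$ strictly increasing on all of $(0,\infty)$, whereas here $f_\epsilon$ and $g_\epsilon$ are only piecewise smooth (with a corner at $s=\alpha$) and the strict monotonicity of $f_\epsilon(x,s)/s$ and $g_\epsilon(x,s)/s$ is confined to $x\in\Lambda_\epsilon$. The definition of $X_{s,\epsilon}^+$ is tailored precisely so that this strictly monotone contribution from $\Lambda_\epsilon$ survives in the monotonicity test, producing the sign $\partial_t F(t_{UV},(U,V))<0$ needed to invoke the implicit function theorem.
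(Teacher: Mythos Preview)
Your overall strategy---adapting the Szulkin--Weth Nehari manifold machinery---is exactly what the paper does: it simply invokes \cite[Corollary~2.3]{szukin-weth} without further argument, and your treatment of (II)--(IV) follows that standard route correctly.

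There is, however, a genuine gap in your approach to (I). You propose to obtain the $C^1$ regularity of $(U,V)\mapsto t_{UV}$ via the implicit function theorem applied to $F(t,(U,V))=I_\epsilon'(tU,tV)(U,V)=h_{UV}'(t)$. For this you need $\partial_t F$ to exist and be continuous, i.e.\ $h_{UV}\in C^2$, which in turn requires $f_\epsilon(x,\cdot)$ and $g_\epsilon(x,\cdot)$ to be $C^1$. But for $x\notin\Lambda_\epsilon$ these functions have a corner at $s=\alpha$: for instance $f'(\alpha^-)=p\alpha^{p-1}\neq\alpha^{p-1}=f'(\alpha^+)$. Hence $\partial_t F(t_{UV},(U,V))$ need not exist, and the implicit function theorem does not apply as stated. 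Your final paragraph addresses only the \emph{sign} of $\partial_t F$ via strict monotonicity on $\Lambda_\epsilon$, not its existence.

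The actual Szulkin--Weth argument avoids this entirely by never differentiating $t_{UV}$. Using the maximality $\Phi_\epsilon(U,V)=\max_{t>0}I_\epsilon(tU,tV)$, one sandwiches the increment: with $t_0:=t_{UV}$ and $t_s:=t_{(U+sW,V+sZ)}$,
\[
I_\epsilon\bigl(t_0(U+sW),t_0(V+sZ)\bigr)-I_\epsilon(t_0U,t_0V)
\ \le\ \Phi_\epsilon(U+sW,V+sZ)-\Phi_\epsilon(U,V)
\ \le\ I_\epsilon\bigl(t_s(U+sW),t_s(V+sZ)\bigr)-I_\epsilon(t_sU,t_sV).
\]
Dividing by $s$ and using only $I_\epsilon\in C^1$ together with the continuity $t_s\to t_0$ already established in Lemma~\ref{Xepsilon}(III), both bounds converge to $t_0\,I_\epsilon'(t_0U,t_0V)(W,Z)$, which gives the G\^ateaux derivative; continuity of this expression in $(U,V)$ then follows from the continuity of $\tau_\epsilon$ and of $I_\epsilon'$. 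This route needs nothing beyond $I_\epsilon\in C^1$ and is what the paper is implicitly invoking.
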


\begin{lemma} \label{psbound}
Assume $(P1)$ and $\lambda<\sqrt{a_0b_0}$.
If $\{(U_n, V_n)\} \subset X_{s,\epsilon}$ is a $(PS)_d$ sequence for $I_\epsilon$ with $d>0,$ then
\begin{itemize}
\item[\rm (I)] there exists $C_0>0$ such that
$||(U_n, V_n)||_{X_{s,\epsilon}}\leq C_0.$
\item[\rm (II)] For any $\theta>0$, there exists $r=r(\theta)$ such that
$$\limsup_{n\to \infty}\left(\int_{\mathbb R^{N+1}_+\setminus \mathcal B^+_r}y^{1-2s}(|\nabla U_n|^2+|\nabla V_n|^2)dxdy
+\int_{\mathbb{R}^N\setminus  B_r}a(\epsilon x)u_n^2+b(\epsilon x) v_n^2dx\right)<\theta. $$
\end{itemize}
\end{lemma}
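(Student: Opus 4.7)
For part (I), the plan is to combine the PS conditions $I_\epsilon(U_n,V_n)=d+o(1)$ and $I_\epsilon'(U_n,V_n)(U_n,V_n)=o(1)\|(U_n,V_n)\|_{X_{s,\epsilon}}$ in two steps. First, computing $I_\epsilon(U_n,V_n)-\tfrac{1}{2}I_\epsilon'(U_n,V_n)(U_n,V_n)$ yields on $\Lambda_\epsilon$ the positive coefficients $\tfrac{1}{2}-\tfrac{1}{p+1}$ and $\tfrac{1}{2}-\tfrac{1}{2_s^*}$ in front of $\int_{\Lambda_\epsilon}(u_n^+)^{p+1}$ and $\int_{\Lambda_\epsilon}(v_n^+)^{2_s^*}$, while on $\mathbb{R}^N\setminus\Lambda_\epsilon$ the integrands are nonnegative by \eqref{60}. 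This produces the bound $\int_{\Lambda_\epsilon}(u_n^+)^{p+1}+\int_{\Lambda_\epsilon}(v_n^+)^{2_s^*}\leq C(1+\|(U_n,V_n)\|_{X_{s,\epsilon}})$. Second, by the choice of $\alpha$ in \eqref{alpha} one has $f_\epsilon(x,u_n)u_n\leq\tfrac{\delta_0}{2}a(\epsilon x)u_n^2$ and $g_\epsilon(x,v_n)v_n\leq\tfrac{\delta_0}{2}b(\epsilon x)v_n^2$ on $\mathbb{R}^N\setminus\Lambda_\epsilon$; substituting these and \eqref{delta0} into $I_\epsilon'(U_n,V_n)(U_n,V_n)$ yields $\tfrac{\delta_0}{2}\|(U_n,V_n)\|_{X_{s,\epsilon}}^2\leq C(1+\|(U_n,V_n)\|_{X_{s,\epsilon}})+o(1)\|(U_n,V_n)\|_{X_{s,\epsilon}}$, whence (I).

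For part (II), the plan is a cut-off argument adapted to the extension. Fix $r$ large enough that $\Lambda_\epsilon\subset B_{r/2}$ and choose $\Phi_r\in C^\infty(\overline{\mathbb{R}^{N+1}_+})$ with $\Phi_r\equiv 0$ on $\mathcal{B}_{r/2}^+$, $\Phi_r\equiv 1$ on $\mathbb{R}^{N+1}_+\setminus\mathcal{B}_r^+$, $0\leq\Phi_r\leq 1$, and $|\nabla\Phi_r|\leq C/r$; its trace $\phi_r$ is then supported in $\mathbb{R}^N\setminus\Lambda_\epsilon$. By (I) the test pair $(U_n\Phi_r^2,V_n\Phi_r^2)$ is bounded in $X_{s,\epsilon}$, so applying $I_\epsilon'(U_n,V_n)$ to it gives an equation of the form
\begin{equation*}
\begin{aligned}
&\int y^{1-2s}(|\nabla U_n|^2+|\nabla V_n|^2)\Phi_r^2 + \int(a(\epsilon x)u_n^2+b(\epsilon x)v_n^2)\phi_r^2 \\
&\quad = \int f_\epsilon(x,u_n)u_n\phi_r^2 + \int g_\epsilon(x,v_n)v_n\phi_r^2 + 2\lambda\int u_n v_n\phi_r^2 \\
&\quad\quad - 2\int y^{1-2s}(U_n\nabla U_n+V_n\nabla V_n)\Phi_r\cdot\nabla\Phi_r + o(1).
\end{aligned}
\end{equation*}
On $\operatorname{supp}\phi_r\subset\mathbb{R}^N\setminus\Lambda_\epsilon$ the nonlinear terms are controlled as in (I) and the coupling is absorbed via \eqref{delta0}, leaving $\tfrac{\delta_0}{2}$ of the weighted $L^2$ tails together with the full gradient tails on the left-hand side.

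The main obstacle is controlling the residual cross term $2\int y^{1-2s}U_n\Phi_r\nabla U_n\cdot\nabla\Phi_r$, which is supported on the annulus $\mathcal{A}_r:=\mathcal{B}_r^+\setminus\mathcal{B}_{r/2}^+$. Cauchy--Schwarz followed by Young's inequality absorbs half of the gradient tail back into the left-hand side, reducing the residual to $\tfrac{C}{r^2}\int_{\mathcal{A}_r}y^{1-2s}(U_n^2+V_n^2)$. By Lemma \ref{59}(IV), up to a subsequence $U_n\to U_*$ and $V_n\to V_*$ in $L^2_{\mathrm{loc}}(y^{1-2s},\mathbb{R}^{N+1}_+)$ for some $U_*,V_*\in\chi_s$, so the $\limsup_n$ of this residual equals $\tfrac{C}{r^2}\int_{\mathcal{A}_r}y^{1-2s}(U_*^2+V_*^2)$. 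Applying Hölder's inequality with the exponent $2\gamma=2+\tfrac{4}{N-2s}$ from Lemma \ref{59}(III), together with $\int_{\mathcal{A}_r}y^{1-2s}\leq Cr^{N+2-2s}$, yields
\begin{equation*}
\tfrac{1}{r^2}\int_{\mathcal{A}_r}y^{1-2s}U_*^2 \leq C\left(\int_{\mathbb{R}^{N+1}_+\setminus\mathcal{B}_{r/2}^+}y^{1-2s}|U_*|^{2\gamma}\right)^{1/\gamma},
\end{equation*}
which tends to $0$ as $r\to\infty$ by absolute continuity of the integral. Treating $V_*$ identically and picking $r=r(\theta)$ large enough therefore delivers the estimate in (II).
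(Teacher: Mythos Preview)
Your proof is correct and follows the same overall strategy as the paper, with two differences in execution worth noting.

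For (I), the paper is more direct: it computes $I_\epsilon-\tfrac{1}{p+1}I_\epsilon'(U_n,V_n)(U_n,V_n)$ in a single step. With this coefficient the $\Lambda_\epsilon$-contributions from $f_\epsilon$ and the $(u_n^+)^{p+1}$ term cancel (and the $g_\epsilon$ part is nonnegative), so what remains is immediately bounded below by $\tfrac{\delta_0}{2}\bigl(\tfrac12-\tfrac{1}{p+1}\bigr)\|(U_n,V_n)\|_{X_{s,\epsilon}}^2$ via \eqref{alpha} and \eqref{delta0}. Your two-step route through $I_\epsilon-\tfrac12 I_\epsilon'$ followed by $I_\epsilon'$ alone is equally valid but slightly longer.

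For (II), both arguments test the $(PS)$ condition against a radial cutoff supported outside $\Lambda_\epsilon$ and absorb the boundary nonlinearities and the coupling exactly as you do. The paper uses $(\eta U_n,\eta V_n)$ rather than a squared cutoff and then bounds the cross term by $\tfrac{C}{r}\bigl(\int_{\mathcal B_r^+\setminus\mathcal B_{r/2}^+}y^{1-2s}(|\nabla U_n|^2+U_n^2+\ldots)\bigr)$, asserting via Lemma~\ref{59} and part (I) that this is $o_n(1)+C/r$. Your treatment---absorbing half the gradient via Young, then invoking the $L^2_{\mathrm{loc}}(y^{1-2s})$ compactness of Lemma~\ref{59}(IV) to pass to a limit $U_*$ before applying H\"older with the exponent $\gamma$---is more explicit about why the residual $\tfrac{1}{r^2}\int_{\mathcal A_r}y^{1-2s}U_n^2$ actually vanishes as $r\to\infty$, a point the paper's one-line citation leaves opaque. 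The only caveat is that extracting a subsequence in Lemma~\ref{59}(IV) a priori yields (II) along that subsequence; since the $\limsup$ over the full sequence is realized along some subsequence, applying your argument to a further subsequence of it recovers the statement as written.
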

\begin{proof}
Let $\{(U_n, V_n)\} \subset X_{s,\epsilon}$ be a $(PS)_d$ sequence for $I_\epsilon$ with $d>0,$ then it follows from \eqref{fepsilon},
\eqref{gepsilon}, \eqref{arfout}  and
\eqref{delta0} that
\begin{align*}
&d+o_n(1)+o\left(||(U_n,V_n)||_{X_{s,\epsilon}}\right)\\=&I_\epsilon(U_n,V_n)-\frac{1}{p+1}I^\prime_\epsilon(U_n,V_n)(U_n,V_n)\\
\geq& \left(\frac{1}{2}-\frac{1}{p+1}\right)\left(||(U_n,V_n)||^2_{X_{s,\epsilon}}-\int_{\mathbb R^N\setminus\Lambda_\epsilon}f_\epsilon(x,u_n)u_ndx-\int_{\mathbb R^N\setminus\Lambda_\epsilon}g_\epsilon(x,v_n)v_ndx
-2\lambda\int_{\mathbb R^N}u_nv_ndx\right)\\
\geq&\left(\frac{1}{2}-\frac{1}{p+1}\right)\left(||(U_n,V_n)||^2_{X_{s,\epsilon}}-\alpha^{p-1}\int_{\mathbb R^N}u_n^2dx -\alpha^{2_s^*-2}\int_{\mathbb R^N}v_n^2dx-2\lambda\int_{\mathbb R^N}u_nv_ndx\right)\\
\geq &\frac{\delta_0}{2}\left(\frac{1}{2}-\frac{1}{p+1}\right)||(U_n,V_n)||^2_{X_{s,\epsilon}}.
\end{align*}
Hence there exists $C_0>0$ such that
$||(U_n, U_n)||_{X_{s,\epsilon}}\leq C_0.$
Namely, (I) holds.

For any fixed $\epsilon,$ we first choose a constant $r>1$ such that $\Lambda_\epsilon\subset\subset  B_{\frac{r}{2}}$
and a cut-off function $\eta\in C^\infty(\overline{\mathbb R^{N+1}_+})$
such that $\eta(x,y)=1$ if $(x,y)\in \overline{\mathbb R^{N+1}_+}\setminus \mathcal B_r^+,$ $\eta(x,y)=0$ if $(x,y)\in \mathcal B_{\frac{r}{2}}^+,$ $0\leq \eta(x,y)\leq 1$ and $|\nabla \eta(x,y)|\leq\frac{C}{r}$ with some positive constant $C$ for any $(x,y)\in \overline{\mathbb R^{N+1}_+}.$
It then follows that $\{(\eta U_n,\eta V_n )\}\subset X_{s,\epsilon}$  is bounded due to (I). Hence,
$$I^\prime_\epsilon(U_n,V_n)(\eta U_n,\eta V_n)=o_n(1).$$
By a direct calculation, this is equivalent to
\begin{equation}\label{3}
\begin{aligned}
&\int_{\mathbb R^{N+1}_+}y^{1-2s}(|\nabla U_n|^2+|\nabla V_n|^2)\eta(x,y)dxdy
+\int_{\mathbb{R}^N}\left(a(\epsilon x)u_n^2+b(\epsilon x) v_n^2\right)\eta(x,0)dx\\
&-\int_{\mathbb R^N}f_\epsilon(x,u_n)u_n\eta(x,0)dx -
 \int_{\mathbb R^N}g_\epsilon(x,v_n)v_n\eta(x,0)dx -2\lambda\int_{\mathbb R^N}u_nv_n\eta^2(x,0)dx\\
 =&o_n(1)-\int_{\mathbb R^{N+1}_+}y^{1-2s}\nabla U_n\cdot\nabla\eta U_ndxdy-\int_{\mathbb R^{N+1}_+}y^{1-2s}\nabla V_n\cdot\nabla\eta V_ndxdy.
\end{aligned}
\end{equation}
Since $\Lambda_\epsilon\subset\subset  B_{\frac{r}{2}}$ and $\eta(x,y)=0$ if $(x,y)\in \mathcal B^+_{\frac{r}{2}},$
we then find
\begin{equation*}
\begin{aligned}
&\frac{\delta_0}{2}\left(\int_{\mathbb R^{N+1}_+}y^{1-2s}(|\nabla U_n|^2+|\nabla V_n|^2)\eta(x,y)dxdy
+\int_{\mathbb{R}^N}\left(a(\epsilon x)u_n^2+b(\epsilon x) v_n^2\right)\eta(x,0)dx\right)\\
\leq& o_n(1)+\frac{C}{r}\left(\int_{\mathcal B_r^+\setminus \mathcal B_{\frac{r}{2}}^+}y^{1-2s}|\nabla U_n|^2dxdy+\int_{\mathcal B_r^+\setminus \mathcal B_{\frac{r}{2}}^+}y^{1-2s} U_n^2dxdy+\int_{\mathcal B_r^+\setminus \mathcal B_{\frac{r}{2}}^+}y^{1-2s}|\nabla V_n|^2dxdy\right . \\
& \left.\qquad\qquad\qquad +\int_{\mathcal B_r^+\setminus \mathcal B_{\frac{r}{2}}^+}y^{1-2s} V_n^2dxdy\right),
\end{aligned}
\end{equation*}
It then follows from Lemma \ref{59}, (I) and the definition of $\eta$ that
\begin{equation*}
\begin{aligned}
\int_{\mathbb R^{N+1}_+\setminus \mathcal B^+_r}y^{1-2s}(|\nabla U_n|^2+|\nabla V_n|^2)dxdy
+\int_{\mathbb{R}^N\setminus  B_r}a(\epsilon x)u_n^2+b(\epsilon x) v_n^2dx
\leq o_n(1)+\frac{C}{r}.
\end{aligned}
\end{equation*}
Let $r>\frac{2C}{\theta},$ then  (II) holds. The proof is complete.
\end{proof}

Next, we conclude a compactness for the (PS) sequence of $I_\epsilon$. Firstly we state a version of the concentration compactness principle in \cite{Dipierro-Medina-Valdinoci}.

\begin{lemma}\label{68}
Assume  that $\{U_n\}\subset\chi_s$  converges weakly to $U$ in $\chi_s.$ Let $\mu,\nu$ be two nonnegative measures  on $\mathbb R^{N+1}_+$ and $\mathbb R^N$ respectively and such that
$$\lim_{n\to\infty}y^{1-2s}|\nabla U_n|^2\to \mu,\quad \lim_{n\to\infty}|u_n|^{2_s^*}\to \nu$$
in the sense of measure. If for any $\delta>0$ there exists $\rho>0$ such that
$$\int_{\mathbb R^{N+1}_+\setminus\mathcal B_\rho^+}y^{1-2s}|\nabla U_n|^2dxdy\leq\delta\quad \text{for any~} n\in\mathbb N.$$
Then there exist an at most countable set $J$ and three families $\{x_j\}_{j\in J}\subset\mathbb R^N,$
$\{\mu_j\}_{j\in J}$ and $\{\nu_j\}_{j\in J},$ $\mu_j,\nu_j\geq 0$ such that
$$\mu\geq y^{1-2s}|\nabla U|^2+\sum_{j\in J}\mu_j\delta_{(x_j,0)},~~ \nu= |u|^{2_s^*}+\sum_{j\in J}\nu_j\delta_{x_j},~~
\mu_j\geq S\nu_j^{2/2_s^*}  \text{~for any~} j\in J.$$
\end{lemma}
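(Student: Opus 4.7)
The plan is to adapt P.-L. Lions' classical concentration--compactness argument to the weighted extension framework, using the Sobolev trace inequality \eqref{67} together with the local compactness in Lemma \ref{59}(IV). I proceed in three stages.

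First, I reduce to the case of zero weak limit. Setting $V_n := U_n - U$, one has $V_n \rightharpoonup 0$ in $\chi_s$ and, by Lemma \ref{59}(IV), $V_n \to 0$ strongly in $L^2_{\mathrm{loc}}(y^{1-2s}, \mathbb{R}^{N+1}_+)$. Expanding $|\nabla U_n|^2 = |\nabla U|^2 + |\nabla V_n|^2 + 2\nabla U \cdot \nabla V_n$ and using the weak convergence $\nabla V_n \rightharpoonup 0$ in $L^2(y^{1-2s})$, one obtains in the sense of measures
\begin{equation*}
y^{1-2s}|\nabla V_n|^2 \to \tilde{\mu} := \mu - y^{1-2s}|\nabla U|^2.
\end{equation*}
Similarly, a Brezis--Lieb argument applied to the traces yields $|v_n|^{2_s^*} \to \tilde{\nu} := \nu - |u|^{2_s^*}$ in the sense of measures. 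It therefore suffices to establish the atomic decomposition of $\tilde{\mu}$ and $\tilde{\nu}$ for the sequence $\{V_n\}$.

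Second, I derive a reverse H\"older inequality between $\tilde{\mu}$ and $\tilde{\nu}$. For any $\phi \in C_c^\infty(\overline{\mathbb{R}^{N+1}_+})$, applying \eqref{67} to $\phi V_n \in \chi_s$ gives
\begin{equation*}
S\left(\int_{\mathbb{R}^N}|\phi(x,0)|^{2_s^*}|v_n|^{2_s^*}\,dx\right)^{2/2_s^*} \leq \int_{\mathbb{R}^{N+1}_+} y^{1-2s}|\nabla(\phi V_n)|^2\,dx\,dy.
\end{equation*}
The right-hand side expands into $\int y^{1-2s}\phi^2|\nabla V_n|^2 + 2\int y^{1-2s}\phi V_n\nabla\phi\cdot\nabla V_n + \int y^{1-2s}|\nabla\phi|^2 V_n^2$. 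By Cauchy--Schwarz, the boundedness of $\{V_n\}$ in $\chi_s$, and the strong $L^2_{\mathrm{loc}}(y^{1-2s})$ convergence $V_n \to 0$, the last two terms vanish in the limit. Passing to the limit yields the key inequality
\begin{equation*}
S\left(\int_{\mathbb{R}^N}|\phi(x,0)|^{2_s^*}\,d\tilde{\nu}\right)^{2/2_s^*} \leq \int_{\mathbb{R}^{N+1}_+}\phi^2\,d\tilde{\mu}.
\end{equation*}

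Third, I extract the atomic structure. Testing the reverse H\"older inequality against families of cutoffs adapted to shrinking balls centered at candidate concentration points $(x_0, 0)$ forces $\tilde{\nu}$ to be purely atomic, $\tilde{\nu} = \sum_{j \in J} \nu_j \delta_{x_j}$ with $J$ at most countable, and simultaneously forces $\tilde{\mu}$ to carry a Dirac mass $\mu_j \delta_{(x_j,0)}$ over each such atom with $\mu_j \geq S\nu_j^{2/2_s^*}$; the tightness hypothesis on $y^{1-2s}|\nabla U_n|^2$ prevents atoms from escaping to infinity so that $J$ is indeed at most countable. Undoing the reduction from Stage 1 gives the stated decomposition of $\mu$ and $\nu$. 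I expect the main difficulty to lie in Stage 3: one must show that the singular part of $\tilde{\mu}$ is concentrated on the trace hyperplane $\mathbb{R}^N \times \{0\}$ as Dirac masses (a nontrivial fact in the weighted half-space setting), and that the sharp constant $S$ of the \emph{trace} embedding---not a bulk Sobolev constant---is what appears in the final inequality.
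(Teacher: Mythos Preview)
The paper does not prove this lemma at all: it is stated without proof and attributed to \cite{Dipierro-Medina-Valdinoci}. Your sketch is therefore not a comparison but an independent proof, and it follows the classical P.-L.~Lions strategy adapted to the weighted extension setting, which is indeed the approach in the cited reference. Stages~1 and~2 are correct as written; the key trace inequality applied to $\phi V_n$ together with the local $L^2(y^{1-2s})$ compactness from Lemma~\ref{59}(IV) is exactly what yields the reverse H\"older inequality.

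One small correction in Stage~3: the countability of $J$ does not come from the tightness hypothesis. It follows immediately from the finiteness of the measure $\tilde\nu$---a finite measure can have at most countably many atoms. The tightness assumption on $y^{1-2s}|\nabla U_n|^2$ is there to guarantee that no mass of the gradient energy escapes to infinity, so that the weak-$*$ limit $\mu$ captures everything (equivalently, so that the convergence in measure is actually tight and $\mu$ is a finite measure on all of $\overline{\mathbb R^{N+1}_+}$); via the trace Sobolev inequality this then transfers tightness to $|u_n|^{2_s^*}$ as well. You should also be explicit that the reverse H\"older inequality $S\bigl(\int|\phi|^{2_s^*}d\tilde\nu\bigr)^{2/2_s^*}\le \int\phi^2\,d\tilde\mu$ with exponent $2_s^*>2$ is what forces the singular part of $\tilde\nu$ to be purely atomic (Lions' standard argument: the inequality implies $\tilde\nu(E)^{2/2_s^*}\le S^{-1}\tilde\mu(E)$ for Borel sets, and since $\tilde\mu$ is finite this rules out any nonatomic singular part of $\tilde\nu$). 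With these clarifications your outline is a complete and correct proof.
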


\begin{lemma}\label{pscondition}
Assume $(P1)$ and $\lambda<\sqrt{a_0b_0}.$ If $\{(U_n, V_n)\} \subset X_{s,\epsilon}$ is a $(PS)_d$ sequence for $I_\epsilon$ with $0<d<\frac{s}{N}S^{\frac{N}{2s}},$ then
there exists a convergent subsequence.
\end{lemma}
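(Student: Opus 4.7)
I would follow the classical three-step scheme for critical problems: boundedness and weak subconvergence, a concentration--compactness analysis to rule out atoms using the energy threshold, and a coercivity argument to upgrade weak to strong convergence.

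First, by Lemma~\ref{psbound}(I) the sequence $\{(U_n,V_n)\}$ is bounded in $X_{s,\epsilon}$, so up to a subsequence $(U_n,V_n)\rightharpoonup(U,V)$ in $X_{s,\epsilon}$. By Lemma~\ref{59}(I), $u_n\to u$ and $v_n\to v$ in $L^q_{\text{loc}}(\mathbb{R}^N)$ for every $q\in[2,2_s^*)$ and a.e., and a standard test--function argument gives $I'_\epsilon(U,V)=0$. Since $f_\epsilon(x,u)\leq (u^+)^p$ with $p+1<2_s^*$, the tightness from Lemma~\ref{psbound}(II) upgrades $u_n\to u$ to strong convergence in $L^{p+1}(\mathbb{R}^N)$.

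The only possible obstruction to compactness is thus the critical term $|v_n^+|^{2_s^*}\chi_{\Lambda_\epsilon}$. I would apply Lemma~\ref{68} to $\{V_n\}$; Lemma~\ref{psbound}(II) rules out mass escaping to infinity, producing atoms $\{x_j\}_{j\in J}$ with $\mu_j\geq S\nu_j^{2/2_s^*}$. Since $g_\epsilon(\cdot,v)$ is only sublinear in $v$ outside $\Lambda_\epsilon$, all atoms satisfy $x_j\in\overline{\Lambda_\epsilon}$. For a fixed $x_j$, choose a cutoff $\psi_\rho\in C_c^\infty(\overline{\mathbb{R}^{N+1}_+})$ supported in $\mathcal{B}_\rho^+((x_j,0))$ with $\psi_\rho\equiv 1$ near $(x_j,0)$ and $|\nabla\psi_\rho|\leq C/\rho$, and evaluate $I'_\epsilon(U_n,V_n)(0,\psi_\rho V_n)\to 0$. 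Sending $n\to\infty$ first and then $\rho\to 0$, the weighted gradient cross--term vanishes by $y^{1-2s}$-weighted Cauchy--Schwarz together with Lemma~\ref{59}(IV), while $\int b(\epsilon x)v_n^2\psi_\rho$ and the coupling $\lambda\int u_nv_n\psi_\rho$ vanish because $u,v\in L^2(\mathbb{R}^N)$ and $|\mathrm{supp}\,\psi_\rho|\to 0$. What remains is $\mu_j\leq\nu_j$, which combined with $\mu_j\geq S\nu_j^{2/2_s^*}$ forces each $\nu_j$ to equal $0$ or satisfy $\nu_j\geq S^{N/2s}$. Suppose the latter held for some $j$. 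Exactly as in \eqref{1}, and using the non-negativity of the remainders via \eqref{60} together with \eqref{delta0},
\[
d=\lim_{n\to\infty}\Bigl(I_\epsilon(U_n,V_n)-\tfrac{1}{2_s^*}I'_\epsilon(U_n,V_n)(U_n,V_n)\Bigr)\geq\Bigl(\tfrac{1}{2}-\tfrac{1}{2_s^*}\Bigr)\nu_j=\tfrac{s}{N}\nu_j\geq\tfrac{s}{N}S^{N/2s},
\]
contradicting $d<\tfrac{s}{N}S^{N/2s}$. Therefore all $\nu_j=0$; combined with tightness this yields $v_n\to v$ strongly in $L^{2_s^*}(\mathbb{R}^N)$.

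With the strong convergences $u_n\to u$ in $L^{p+1}(\mathbb{R}^N)$ and $v_n\to v$ in $L^{2_s^*}(\mathbb{R}^N)$, the nonlinear and coupling terms in the identity $\bigl(I'_\epsilon(U_n,V_n)-I'_\epsilon(U,V)\bigr)(U_n-U,V_n-V)=o_n(1)$ all vanish, and \eqref{delta0} absorbs the residual $\lambda$-coupling into the quadratic norm, leaving
\[
\tfrac{\delta_0}{2}\|(U_n-U,V_n-V)\|_{X_{s,\epsilon}}^2\leq o_n(1),
\]
so $(U_n,V_n)\to(U,V)$ strongly in $X_{s,\epsilon}$. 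The main difficulty lies in the second step: in the weighted extension setting one must justify carefully, via $y^{1-2s}$-weighted estimates and iterated limits in $n$ and $\rho$, that the gradient cross--term $\int y^{1-2s}V_n\nabla V_n\cdot\nabla\psi_\rho$ vanishes and that the asymmetric coupling $\lambda uv$ contributes no atomic residue, so that the clean inequality $\mu_j\leq\nu_j$ survives despite the system structure.
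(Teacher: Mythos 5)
Your overall plan is the same as the paper's: boundedness and a weak limit that is critical, strong convergence of the subcritical $u$-part from Lemma~\ref{psbound}(II), concentration--compactness for the critical $v$-part using the tightness to rule out loss of mass at infinity, localized testing to obtain $\mu_j\leq\nu_j$ and hence $\nu_j\geq S^{N/2s}$, and the energy threshold to force $\nu_j=0$, finishing via a coercivity estimate. One cosmetic difference: the paper applies the CCP to $\{V_n^+\}$ (so $\nu$ directly limits $|v_n^+|^{2_s^*}$, which is what $g_\epsilon(x,v)v$ actually produces); you apply it to $\{V_n\}$ itself, which silently conflates $|v_n|^{2_s^*}$ with $|v_n^+|^{2_s^*}$. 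Also, the paper's conclusion is the weaker but sufficient one that any atom with $\nu_j>0$ lies in $\mathbb{R}^N\setminus\Lambda_\epsilon$; you additionally assert all atoms lie in $\overline{\Lambda_\epsilon}$ (by sublinearity of $g_\epsilon$ off $\Lambda_\epsilon$) and thereby rule them all out. That extra half is plausible but unproven as written, and it leaves atoms on $\partial\Lambda_\epsilon$ to be handled with care; the paper avoids this by only showing the convergence $\int_{\Lambda_\epsilon}|v_n^+|^{2_s^*}\to\int_{\Lambda_\epsilon}|v_\epsilon^+|^{2_s^*}$ rather than full $L^{2_s^*}(\mathbb{R}^N)$ strong convergence.

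The genuine error is in your energy-threshold line: you subtract $\frac{1}{2_s^*}I'_\epsilon(U_n,V_n)(U_n,V_n)$ and claim the result is $\geq\bigl(\frac12-\frac{1}{2_s^*}\bigr)\nu_j$. With the coefficient $\frac{1}{2_s^*}$ the quantity $\frac{1}{2_s^*}f_\epsilon(x,u)u-F_\epsilon(x,u)$ equals $\bigl(\frac{1}{2_s^*}-\frac{1}{p+1}\bigr)|u^+|^{p+1}$ on $\Lambda_\epsilon$, which is \emph{negative} since $p+1<2_s^*$, and the $G_\epsilon$ bracket vanishes identically on $\Lambda_\epsilon$, so $\nu_j$ never appears through the nonlinearity; the only place it could re-enter is via $\mu_j$ in the surviving quadratic term, but that is entangled with the coupling and the negative $F_\epsilon$ contribution and does not give a clean lower bound of $\frac{s}{N}\nu_j$. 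The paper instead subtracts $\frac12 I'_\epsilon$, which kills the quadratic and coupling terms exactly, makes every remaining integrand nonnegative by \eqref{60}, and leaves the coefficient $\bigl(\frac12-\frac{1}{2_s^*}\bigr)=\frac{s}{N}$ precisely on $\int_{\Lambda_\epsilon}|v_n^+|^{2_s^*}$; localizing with $\eta_\rho$ supported in $\Lambda_\epsilon$ then yields $\geq\frac{s}{N}\nu_j$. You should use $\frac12$ here, exactly as in \eqref{1}, which you in fact cite.
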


\begin{proof}
Let $\{(U_n, V_n)\} \subset X_{s,\epsilon}$ be a $(PS)_d$  sequence for $I_\epsilon$ with $0<d<\frac{s}{N}S^{\frac{N}{2s}},$ then there exists $C_0>0$ such that
$||(U_n, V_n)||_{X_{s,\epsilon}}\leq C_0$   by (I) in Lemma \ref{psbound}.
Consequently, we have a subsequence (denoted still by $\{(U_n, V_n)\}$ for convenience) and
$(U_\epsilon,V_\epsilon)$ such that
\begin{equation*}
(U_n,V_n)\rightharpoonup (U_\epsilon,V_\epsilon) \quad \text{ in}~X_{s,\epsilon},
\end{equation*}
\begin{equation*}
(U_n,V_n)\to (U_\epsilon,V_\epsilon) \quad \text{a.e. in}~\mathbb R^{N+1}_+,
\end{equation*}
\begin{equation}\label{stronglycon}
(u_n,v_n)\to (u_\epsilon,v_\epsilon) \quad \text{in}~L_{\text{loc}}^s(\mathbb R^N) \times L_{\text{loc}}^t(\mathbb R^N) ~\text{with}~ 1\leq s,t< 2_s^*.
\end{equation}
It then follows from $I^\prime_\epsilon(U_n,V_n)\to 0$ that $(U_\epsilon,V_\epsilon)$ is a critical point of $I_\epsilon.$
Thanks to \eqref{fepsilon},\eqref{gepsilon} and \eqref{stronglycon}, we have
$$f_\epsilon(x,u_n)\to f_\epsilon(x,u_\epsilon), ~~g_\epsilon(x,u_n)\to g_\epsilon(x,u_\epsilon)\quad \text{in}~
L_{\text{loc}}^1(\mathbb R^N).$$
By virtue of (II) in Lemma \ref{psbound}, Lemma \ref{68} and \eqref{fepsilon}, we can conclude that for any $\theta>0$ there exist $r=r(\theta)$ and $N_0>0$
such that $\Lambda_\epsilon\subset\subset B_r,$ and
\begin{align*}
\int_{B_r}|f_\epsilon(x,u_n)u_n-f_\epsilon(x,u_\epsilon)u_\epsilon|dx&\leq\frac{\theta}{3},\quad\text{for any~} n>N_0,\\
\int_{\mathbb R^N\setminus B_r}|f_\epsilon(x,u_n)u_n|dx\leq\alpha^{p-1}\int_{\mathbb R^N\setminus B_r}u_n^2dx&\leq\frac{\theta}{3},\quad\text{for any~} n>N_0,\\
\int_{\mathbb R^N\setminus B_r}|f_\epsilon(x,u_\epsilon)u_\epsilon| dx\leq\alpha^{p-1}\int_{\mathbb R^N\setminus B_r}u_\epsilon^2dx&\leq\frac{\theta}{3}.
\end{align*}
As a consequence, for any $n>N_0$, we find that
\begin{equation*}
\begin{aligned}
&\int_{\mathbb R^N}|f_\epsilon(x,u_n)u_n-f_\epsilon(x,u_\epsilon)u_\epsilon|dx\\
\leq &\int_{ B_r}|f_\epsilon(x,u_n)u_n-f_\epsilon(x,u_\epsilon)u_\epsilon|dx+\int_{\mathbb R^N\setminus B_r}|f_\epsilon(x,u_n)u_n|dx+\int_{\mathbb R^N\setminus B_r}|f_\epsilon(x,u_\epsilon)u_\epsilon| dx\\
\leq &\theta,
\end{aligned}
\end{equation*}
that is
\begin{equation}\label{4}
\lim_{n\to\infty}\int_{\mathbb R^N}f_\epsilon(x,u_n)u_ndx=\int_{\mathbb R^N}f_\epsilon(x,u_\epsilon)u_\epsilon dx.
\end{equation}
In a similar manner, we obtain that $u_nv_n\to u_\epsilon v_\epsilon$ in $L^1(\mathbb R^N)$ as $n\to\infty$.
It follows from $I^\prime(U_n,V_n)(U_n,0)=o_n(1)$ that
\begin{equation}\label{6}
\begin{aligned}
&\int_{\mathbb R^{N+1}_+} y^{1-2s} |\nabla U_n|^2 dxdy+\int_{\mathbb R^N}a(\epsilon x)u_n^2 dx\\
=&o_n(1)+\int_{\mathbb R^N}f_\epsilon(x,u_n)u_ndx +\lambda\int_{\mathbb R^N}u_nv_ndx\\
\to &\int_{\mathbb R^N}f_\epsilon(x,u_\epsilon)u_\epsilon dx +\lambda\int_{\mathbb R^N}u_\epsilon v_\epsilon dx\\
=&\int_{\mathbb R^{N+1}_+} y^{1-2s} |\nabla U_\epsilon|^2 dxdy+\int_{\mathbb R^N}a(\epsilon x)u_\epsilon^2 dx.
\end{aligned}
\end{equation}
By using (II) in Lemma \eqref{psbound} and the concentration compactness principle, we readily conclude that there exist an at most countable
set $J$ and three families $\{x_j\}_{j\in J},$ $\{\mu_j\}_{j\in J}$ and $\{\nu_j\}_{j\in J}$  with $\mu_j\geq 0,$ $\nu_j\geq0$ for any $j\in J$ such that
$$\mu\geq y^{1-2s}|\nabla V^+_\epsilon|^2+\sum_{j\in J}\mu_j\delta_{(x_j,0)},~
\nu=|v^+_\epsilon|^{2_s^*}+\sum_{j\in J}\nu_j\delta_{x_j},~
\mu_j>S\nu_j^{\frac{2}{2_s^*}}\quad \text{for any}\ j\in J.$$
If $\nu_j=0$   for any $j\in J$, then
$$\lim_{n\to \infty}\int_{\mathbb R^N}|v_n^+|^{2_s^*}dx=\int_{\mathbb R^N}|v_\epsilon^+|^{2_s^*}dx.$$
If there is some $j\in J$ such that $\nu_j\neq0,$  then we claim that $x_j\notin \Lambda_\epsilon.$ Otherwise, there exists $\rho>0$ such that $B_\rho(x_j)\subset\subset \Lambda.$  Let $\eta\in C_c^\infty(\overline{\mathbb R^{N+1}_+})$ be a cut-off function such that $\eta(x,y)=1$ in $\mathcal{B}_{\frac{1}{2}}^+$, $\eta(x,y)=0$ in $\overline{\mathbb R^{N+1}_+}\setminus\mathcal{B}_1^+$ and $0\leq\eta(x,y)\leq 1$ for any $(x,y)\in \overline{\mathbb R^{N+1}_+}.$
Define
$$\eta_\rho(x,y)=\eta\left(\frac{x-x_i}{\rho},\frac{y}{\rho}\right)\quad\text{for any~}(x,y)\in \overline{\mathbb R^{N+1}_+}.$$
Owing to   $I^\prime(U_n,V_n)(0,V_n\eta_\rho)=o_n(1),$  we obtain
\begin{equation}\label{5}
\begin{aligned}
&\int_{\mathbb R^{N+1}_+}y^{1-2s}|\nabla V_n|^2\eta_\rho(x,y)dxdy
+\int_{\mathbb{R}^N}b(\epsilon x) v_n^2\eta_\rho(x,0)dx\\
&=o_n(1)+\int_{\mathbb R^N}g_\epsilon(x,v_n)v_n\eta_\rho(x,0)dx +\lambda\int_{\mathbb R^N}u_nv_n\eta_\rho(x,0)dx-\int_{\mathbb R^{N+1}_+}y^{1-2s}\nabla V_n\cdot\nabla\eta_\rho V_ndxdy.
\end{aligned}
\end{equation}
Noting from (P1) and \eqref{stronglycon} that
$$\lim_{\rho\to 0}\lim_{n\to \infty}\int_{\mathbb R^N}b(\epsilon x)v^2_n\eta_\rho(x,0)dx=\lim_{\rho\to 0}\int_{B_\rho(x_j)}b(\epsilon x)v^2_\epsilon\eta_\rho(x,0) dx=0.$$
Similarly,
$$\lim_{\rho\to 0}\lim_{n\to \infty}\int_{\mathbb R^N}u_nv_n\eta_\rho(x,0)dx=0.$$
By virtue of $supp~ \eta_\rho(\cdot,0)\subset B_\rho(x_j)\subset\subset \Lambda_\epsilon$ and \eqref{gepsilon},
\begin{equation*}
\begin{aligned}
&\lim_{\rho\to 0}\limsup_{n\to\infty}\int_{\mathbb R^N}g_\epsilon(x,v_n)v_n\eta_\rho(x,0)dx
=\lim_{\rho\to 0}\limsup_{n\to\infty}\int_{\mathbb R^N}|v^+_n|^{2_s^*}\eta_\rho(x,0) dx\\
=&\lim_{\rho\to 0}\int_{\mathbb R^N}(|v_\epsilon^+|^{2_s^*}+\sum_{j\in J}\nu_j\delta_{x_j})\eta_\rho(x,0) dx=\nu_j.
\end{aligned}
\end{equation*}
Similarly,
$$\lim_{\rho\to 0}\limsup_{n\to\infty}\int_{\mathbb R^{N+1}_+}y^{1-2s}|\nabla V_n|^2\eta_\rho(x,y)dxdy\geq
\lim_{\rho\to 0}\limsup_{n\to\infty}\int_{\mathbb R^{N+1}_+}y^{1-2s}|\nabla V^+_n|^2\eta_\rho(x,y)dxdy\geq \mu_j.$$
Note that
\begin{equation}\label{a}
\begin{aligned}
\int_{\mathbb R^{N+1}_+}y^{1-2s}\nabla V_n\cdot\nabla\eta_\rho V_ndxdy=&\int_{\mathbb R^{N+1}_+}y^{1-2s}\nabla V_n\cdot\nabla\eta_\rho (V_n-V_\epsilon)dxdy\\
&+\int_{\mathbb R^{N+1}_+}y^{1-2s}\nabla V_n\cdot\nabla\eta_\rho V_\epsilon dxdy\\
:=&A_1+A_2
\end{aligned}
\end{equation}
Using the H\"{o}lder inequality, we deduce
\begin{equation*}
|A_1|
\leq\frac{1}{\rho}\left(\int_{\mathcal B_\rho^+((x_j,0)) }y^{1-2s}|\nabla V_n|^{2}dxdy\right)^{\frac{1}{2}}
\left(\int_{\mathcal B_\rho^+((x_j,0)) }y^{1-2s}|V_n-V_\epsilon|^{2}dxdy\right)^{\frac{1}{2}},
\end{equation*}
It follows  from Lemma \ref{59} that $\mathop{\lim}\limits_{n\to\infty}A_1=0.$
From the  H\"{o}lder inequality again, we have
\begin{equation*}
\begin{aligned}
|A_2|\leq& \frac{1}{\rho}\left(\int_{\mathbb R^{N+1}_+}y^{1-2s}|\nabla V_n|^2dxdy\right)^{\frac{1}{2}}
\left(\int_{\mathcal B^+_\rho((x_j,0))}y^{1-2s}|V_\epsilon|^{2\gamma}dxdy\right)^{\frac{1}{2\gamma}}
\left(\int_{\mathcal B^+_\rho((x_j,0))}y^{1-2s}dxdy\right)^{\frac{\gamma-1}{2\gamma}}\\
\leq&\rho^{\frac{(N+2-2s)(\gamma-1)}{2\gamma}-1}\sup_{n}||V_n||_{\chi_s}\left(\int_{\mathcal B^+_\rho((x_j,0))}y^{1-2s}|V_\epsilon|^{2\gamma}dxdy\right)^{\frac{1}{2\gamma}}.
\end{aligned}
\end{equation*}
In view of $\gamma=1+\frac{2}{N-2s},$ there holds $\frac{(N+2-2s)(\gamma-1)}{2\gamma}-1=0,$ which together with Lemma \ref{59}  implies that
$$\lim_{\rho\to0}\mathop{\lim}\limits_{n\to\infty}A_2=0.$$
We further from \eqref{a}  conclude that
$$\lim_{\rho\to 0}\limsup_{n\to\infty}\int_{\mathbb R^{N+1}_+}y^{1-2s}\nabla V_n\cdot\nabla\eta_\rho V_ndxdy=0.$$
By passing to the limit as $n\to\infty$ and then $\rho\to0$ in \eqref{5}, we have
$\mu_j\leq\nu_j,$ which combined with $\mu_j\geq S \nu_j^{\frac{2}{2_s^*}}$ and $\nu_j>0$ further shows that $\nu_j>S^{\frac{N}{2s}}.$ Since  $0<d<\frac{s}{N}S^{\frac{N}{2s}},$ there exists $N_0>0$ such that for any $n>N_0$,
\begin{equation*}
\begin{aligned}
\frac{s}{N}S^{\frac{N}{2s}}>d&=I_\epsilon(U_n,V_n)-\frac{1}{2}I^\prime_\epsilon(U_n,V_n)(U_n,V_n)+o_n(1)\\
&=\int_{\mathbb R^N}\frac{1}{2}f_\epsilon(x,u_n)u_n-F_\epsilon(x,u_n) dx +
\int_{\mathbb R^N}\frac{1}{2}g_\epsilon(x,v_n)v_n-G_\epsilon(x,v_n) dx+o_n(1)\\
&\geq \frac{s}{N}\int_{\Lambda_\epsilon}|v_n^+|^{2_s^*} dx+o_n(1)\\
&\geq \frac{s}{N}\int_{\mathbb R^N}|v_n^+|^{2_s^*}\eta_\rho(x,0) dx+o_n(1)\\
&\to\frac{s}{N}\int_{B_\rho(x_j)}|v_\epsilon^+|^{2_s^*}\eta_\rho(x,0)dx+\frac{s}{N}\int_{\mathbb R^N}\sum_{j\in J}\nu_j\delta_{x_j}\eta_\rho(x,0)dx\\
&\geq\frac{s}{N}\int_{B_\rho(x_j)}|v_\epsilon^+|^{2_s^*}\eta_\rho(x,0)dx +\frac{s}{N}S^{\frac{N}{2s}},
\end{aligned}
\end{equation*}
we reach a contradiction. As a consequence, $x_j\notin \Lambda_\epsilon$ and then
\begin{equation*}
\lim_{n\to \infty}\int_{\Lambda_\epsilon} |v_n^+|^{2_s^*} dx =\int_{\Lambda_\epsilon} |v_\epsilon^+|^{2_s^*} dx.
\end{equation*}
By a similar discussion to \eqref{4}, we infer
\begin{equation*}
\lim_{n\to\infty}\int_{\mathbb R^N\setminus \Lambda_\epsilon}g_\epsilon(x,v_n)v_ndx=\int_{\mathbb R^N\setminus \Lambda_\epsilon}g_\epsilon(x,v_\epsilon)v_\epsilon dx.
\end{equation*}
To this end, it follows from $I^\prime_\epsilon(U_n,V_n)(0,V_n)\to 0$ that
\begin{equation*}
\begin{aligned}
\end{aligned}
\begin{aligned}
&\int_{\mathbb R^{N+1}_+}y^{1-2s}|\nabla V_n|^2dxdy
+\int_{\mathbb{R}^N}b(\epsilon x) v_n^2dx\\
=&o_n(1)+\int_{ \Lambda_\epsilon}|v_n^+|^{2_s^*}dx+\int_{\mathbb R^N\setminus \Lambda_\epsilon}g_\epsilon(x,v_n)v_ndx +\lambda\int_{\mathbb R^N}u_nv_ndx\\
\to&\int_{ \Lambda_\epsilon}|v_\epsilon^+|^{2_s^*}dx+\int_{\mathbb R^N\setminus \Lambda_\epsilon}g_\epsilon(x,v_\epsilon)v_\epsilon dx +\lambda\int_{\mathbb R^N}u_\epsilon v_\epsilon dx\\
=&\int_{\mathbb R^N}g_\epsilon(x,v_\epsilon)v_\epsilon dx +\lambda\int_{\mathbb R^N}u_\epsilon v_\epsilon dx\\
=&\int_{\mathbb R^{N+1}_+}y^{1-2s}|\nabla V_\epsilon|^2dxdy
+\int_{\mathbb{R}^N}b(\epsilon x) v_\epsilon^2dx,
\end{aligned}
\end{equation*}
which together with \eqref{6} implies
$(U_n,V_n)\to (U_\epsilon,V_\epsilon)$ {in} $X_{s,\epsilon}.$
The proof is complete.
\end{proof}
\begin{cor}\label{spherepscondition}
Assume $(P1)$ and $\lambda<\sqrt{a_0b_0}.$ Then
the function $\overline{\Phi}_\epsilon$ defined by \eqref{phi} satisfies $(PS)_d$  condition  with $0<d<\frac{s}{N}S^{\frac{N}{2s}}$ on $S_{s,\epsilon}^+.$
\end{cor}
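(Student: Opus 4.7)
The plan is to reduce the $(PS)_d$ condition for $\overline{\Phi}_\epsilon$ on $S_{s,\epsilon}^+$ to the corresponding condition for $I_\epsilon$ on $X_{s,\epsilon}$, which has already been established in Lemma \ref{pscondition}. The bridge between these two levels is provided by the homeomorphism $\overline{\tau}_\epsilon: S_{s,\epsilon}^+ \to N_\epsilon$ constructed in Lemma \ref{Xepsilon} and the PS-transfer statement in Lemma \ref{sphere}(III).

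Concretely, I would start with an arbitrary sequence $\{(U_n, V_n)\} \subset S_{s,\epsilon}^+$ satisfying $\overline{\Phi}_\epsilon(U_n, V_n) \to d$ and $\overline{\Phi}_\epsilon'(U_n, V_n) \to 0$ in $(T_{U_nV_n} S_{s,\epsilon}^+)^*$. By Lemma \ref{sphere}(III), the sequence $(\widetilde{U}_n, \widetilde{V}_n) := \overline{\tau}_\epsilon(U_n, V_n) \in N_\epsilon$ is a $(PS)_d$ sequence for $I_\epsilon$. Since $d$ lies in the range $(0, \frac{s}{N} S^{N/(2s)})$ to which Lemma \ref{pscondition} applies, we obtain, along a subsequence, a strong limit $(\widetilde{U}_n, \widetilde{V}_n) \to (\widetilde{U}, \widetilde{V})$ in $X_{s,\epsilon}$.

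It remains to transfer this strong convergence back to the sphere. By Lemma \ref{nehari}(I), each $(\widetilde{U}_n, \widetilde{V}_n) \in N_\epsilon$ satisfies $\|(\widetilde{U}_n, \widetilde{V}_n)\|_{X_{s,\epsilon}} \geq C_0 > 0$, so the strong limit also obeys $\|(\widetilde{U}, \widetilde{V})\|_{X_{s,\epsilon}} \geq C_0$; in particular it is nonzero and, since $N_\epsilon$ is closed under strong convergence (as $I_\epsilon'$ is continuous and $I_\epsilon'(\widetilde{U},\widetilde{V})(\widetilde{U},\widetilde{V})=0$ passes to the limit), it belongs to $N_\epsilon \subset X_{s,\epsilon}^+$. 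The continuous inverse map $\varsigma_\epsilon = \overline{\tau}_\epsilon^{-1}: N_\epsilon \to S_{s,\epsilon}^+$ from Lemma \ref{Xepsilon}(III) then yields
\[
(U_n, V_n) = \varsigma_\epsilon(\widetilde{U}_n, \widetilde{V}_n) \to \varsigma_\epsilon(\widetilde{U}, \widetilde{V}) \quad \text{in } S_{s,\epsilon}^+,
\]
which gives the desired strong convergence.

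The only delicate point I anticipate is making sure the limit $(\widetilde U,\widetilde V)$ does not escape $X_{s,\epsilon}^+$, because $\varsigma_\epsilon$ is only defined there; but the uniform lower bound from Lemma \ref{nehari}(I) together with $d > 0$ rules this out. Otherwise the argument is purely a composition of already-established facts.
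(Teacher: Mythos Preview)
Your proposal is correct and follows essentially the same route as the paper: transfer the $(PS)_d$ sequence to $N_\epsilon$ via Lemma \ref{sphere}(III), apply Lemma \ref{pscondition} to get strong convergence in $X_{s,\epsilon}$, check that the limit stays in $N_\epsilon$, and pull back through the homeomorphism $\overline{\tau}_\epsilon^{-1}$. Your justification that the limit lies in $N_\epsilon$ (continuity of the Nehari constraint plus the uniform lower bound from Lemma \ref{nehari}(I)) is slightly more explicit than the paper's, which simply cites Lemma \ref{Xepsilon}(I)(II) at that step, but the substance is identical.
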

\begin{proof}
Assume that $\{(U_n,V_n)\}\subset S_{s,\epsilon}^+ $ is a $(PS)_d$ sequence with $0<d<\frac{s}{N}S^{\frac{N}{2s}}$ for
$\overline{\Phi}_\epsilon$, then $\{\overline{\tau}_\epsilon (U_n,V_n)\}$ is a $(PS)_d$ sequence for $I_\epsilon$ due to
Lemma \ref{sphere}. It then follows from Lemma \ref{pscondition} that there exist  $(U,V)\in X_{s,\epsilon}$
and a subsequence (still denoted by $\{\overline{\tau}_\epsilon (U_n,V_n)\}$) such that
\begin{equation}\label{7}
\overline{\tau}_\epsilon (U_n,V_n)\to (U,V)\quad \text{in~} X_{s,\epsilon}.
\end{equation}
 We then by(I) and (II) in Lemma \ref{Xepsilon} infer that $(U,V)\in N_\epsilon$ and $\overline{\tau}_\epsilon^{-1}(U,V)\in S_{s,\epsilon}^+. $
Thanks to \eqref{7} and (III) in Lemma \ref{Xepsilon}, we have
$$ (U_n,V_n)\to \overline{\tau}^{-1}_\epsilon(U,V)\quad \text{in~} X_{s,\epsilon}.$$
The proof is complete.
\end{proof}
\begin{lemma}\label{energybound}
Assume $(P1)$ and $\lambda<\sqrt{a_0b_0}.$
If there exists $x_0\in \Lambda$ such that system \eqref{autonomoussys} has a positive vector ground state with  energy $c_{x_0}$ less than $\frac{s}{N}S^{\frac{N}{2s}},$ then
$$\limsup_{\epsilon\to0}c_\epsilon\leq c_{x_0}<\frac{s}{N}S^{\frac{N}{2s}}.$$
  Moreover, system \eqref{mp} admits a positive vector ground state $(U_\epsilon,V_\epsilon)\in X_{s,\epsilon}$.
\end{lemma}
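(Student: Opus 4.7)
The plan is to bound $c_\epsilon$ from above by inserting a localized copy of the autonomous ground state into the minimax formula, and then to extract a ground state for \eqref{mp} from a mountain pass $(PS)_{c_\epsilon}$ sequence using the subcritical compactness provided by Lemma \ref{pscondition}.

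\textbf{Upper bound on $c_\epsilon$.} Let $(U_0,V_0)$ be the given positive vector ground state of \eqref{autonomoussys} at $x_0\in\Lambda$, and let $J_{x_0}$ denote the associated autonomous functional (obtained from \eqref{Iepsilon} by replacing $a(\epsilon x),b(\epsilon x)$ with $a(x_0),b(x_0)$ and $F_\epsilon,G_\epsilon$ with the pure power primitives). Pick $\rho>0$ with $B_{2\rho}(x_0)\subset\Lambda$ and a cutoff $\eta\in C_c^\infty(B_{2\rho}(x_0))$, $\eta\equiv 1$ on $B_\rho(x_0)$, $0\le\eta\le 1$. Define
$$\widetilde U_\epsilon(x,y):=\eta(\epsilon x)\,U_0\!\bigl(x-\tfrac{x_0}{\epsilon},y\bigr),\qquad \widetilde V_\epsilon(x,y):=\eta(\epsilon x)\,V_0\!\bigl(x-\tfrac{x_0}{\epsilon},y\bigr).$$
The traces are supported in $\Lambda_\epsilon$, so on their supports $F_\epsilon,G_\epsilon$ reduce to the unmodified primitives. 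After the translation $x\mapsto x+x_0/\epsilon$, the potential and nonlinear terms of $I_\epsilon(t\widetilde U_\epsilon,t\widetilde V_\epsilon)$ converge by dominated convergence (continuity of $a,b$; $\eta(\epsilon\cdot+x_0)\to 1$ pointwise; $U_0,V_0\in\mathcal E$) to those of $J_{x_0}(tU_0,tV_0)$, while the gradient cross term $\epsilon^2\int y^{1-2s}|\nabla\eta(\epsilon x'+x_0)|^2U_0^2\,dx'dy$ is supported in the escaping annulus $\{\rho/\epsilon<|x'|<2\rho/\epsilon\}$ and is shown to be $o(1)$ using the decay of $U_0$ from the Poisson representation \eqref{56} together with the $\epsilon^2$ prefactor. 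Combining these gives $\lim_{\epsilon\to 0}I_\epsilon(t\widetilde U_\epsilon,t\widetilde V_\epsilon)=J_{x_0}(tU_0,tV_0)$ uniformly for $t$ in bounded intervals. Choosing $T>0$ so large that $J_{x_0}(TU_0,TV_0)<0$, the curve $t\mapsto(tT\widetilde U_\epsilon,tT\widetilde V_\epsilon)$ lies in $\Gamma$ for all small $\epsilon$, and Lemma \ref{eeq} yields
$$c_\epsilon\le\max_{t\in[0,T]}I_\epsilon(t\widetilde U_\epsilon,t\widetilde V_\epsilon)\longrightarrow\max_{t\ge 0}J_{x_0}(tU_0,tV_0)=c_{x_0}<\tfrac{s}{N}S^{N/(2s)}.$$

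\textbf{Existence of a ground state.} Fix $\epsilon$ small so that $c_\epsilon<\tfrac{s}{N}S^{N/(2s)}$. By Lemma \ref{mmp} and the standard mountain pass theorem, there exists a $(PS)_{c_\epsilon}$ sequence for $I_\epsilon$ in $X_{s,\epsilon}$. Lemma \ref{pscondition} extracts a subsequence converging strongly to a critical point $(U_\epsilon,V_\epsilon)$ at level $c_\epsilon=\inf_{N_\epsilon}I_\epsilon$ (by Lemma \ref{eeq}), which is therefore a ground state of \eqref{mp}. To secure $u_\epsilon,v_\epsilon\ge 0$, test $I_\epsilon'(U_\epsilon,V_\epsilon)$ against $(U_\epsilon^-,0)$ and $(0,V_\epsilon^-)$ with $W^-:=\min\{W,0\}$; since $f_\epsilon(x,\cdot)$ and $g_\epsilon(x,\cdot)$ vanish on $(-\infty,0]$, summing and invoking the coupling bound \eqref{delta0} gives $\delta_0(\|U_\epsilon^-\|_{a,\epsilon}^2+\|V_\epsilon^-\|_{b,\epsilon}^2)\le 0$, so the negative parts vanish. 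The strong maximum principle for the Caffarelli--Silvestre extension then promotes this to strict positivity of both traces.

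\textbf{Main obstacle.} The substantive difficulty is the cross-gradient estimate in the first step: controlling $\epsilon^2\int y^{1-2s}|\nabla\eta(\epsilon x'+x_0)|^2 U_0^2\,dx'dy$ and its $V_0$-analogue as $\epsilon\to 0$. Since $(U_0,V_0)$ has only polynomial decay and $y^{1-2s}$ is not integrable in $y$ on $(0,\infty)$ on its own, one must exploit the joint $(x,y)$-decay of the Poisson extension — using bounds on $\|U_0(\cdot,y)\|_2$ together with the contraction of the annular support off the bulk of $U_0$ — to push the integrand to zero. Once this estimate is in place, everything else is a direct combination of the mountain pass theorem with Lemmas \ref{mmp}, \ref{eeq}, and \ref{pscondition}.
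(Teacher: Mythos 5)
Your overall strategy is the same as the paper's: truncate and translate the autonomous ground state to get test functions supported in $\Lambda_\epsilon$, use them to bound the mountain pass level $c_\epsilon$ from above by $c_{x_0}$, then run the mountain pass theorem through Lemma \ref{pscondition}, and finish with the negative-part test plus positivity. The difference is that the step you flag as the ``main obstacle''---controlling $\epsilon^2\int y^{1-2s}|\nabla\eta(\epsilon x'+x_0,\epsilon y)|^2 U_0^2\,dx'dy$ and its $V_0$-analogue---is precisely the technical heart of the proof, and you leave it as a gap with only a vague plan (``bounds on $\|U_0(\cdot,y)\|_2$ together with the contraction of the annular support''), which as stated is not adequate: $U_0\in\chi_s$ gives no integrability of $y^{1-2s}U_0^2$ over $\mathbb R^{N+1}_+$, and the Poisson bound $\|U_0(\cdot,y)\|_2\le\|u_0\|_2$ does not localize in $x$ on the annulus.

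The paper closes this gap cleanly via H\"older's inequality together with the weighted Sobolev embedding $\chi_s\hookrightarrow L^{2\gamma}(y^{1-2s},\mathbb R^{N+1}_+)$ with $\gamma=1+\tfrac{2}{N-2s}$ from Lemma \ref{59}(III). On the annulus $\mathcal A_\epsilon:=\mathcal B^+_{r/\epsilon}\setminus\mathcal B^+_{r/2\epsilon}$ (where $\nabla\eta\ne 0$), one has
$$
\frac{\epsilon^2}{r^2}\int_{\mathcal A_\epsilon} y^{1-2s}U_0^2\,dx'dy
\le \frac{\epsilon^2}{r^2}\left(\int_{\mathcal A_\epsilon} y^{1-2s}U_0^{2\gamma}\right)^{1/\gamma}\left(\int_{\mathcal A_\epsilon} y^{1-2s}\right)^{(\gamma-1)/\gamma},
$$
and since $\int_{\mathcal A_\epsilon} y^{1-2s}\sim (r/\epsilon)^{N+2-2s}$ while $\frac{(N+2-2s)(\gamma-1)}{\gamma}=2$, the $\epsilon^2/r^2$ prefactor cancels exactly with the second factor, leaving $C\bigl(\int_{\mathcal A_\epsilon} y^{1-2s}U_0^{2\gamma}\bigr)^{1/\gamma}\to 0$ because $U_0\in L^{2\gamma}(y^{1-2s})$ and $\mathcal A_\epsilon$ escapes to infinity. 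The mixed term $\epsilon\int y^{1-2s}\nabla\eta\cdot\nabla U_0\,U_0\eta$ is handled the same way with a three-factor H\"older split. Without this (or an equivalent) argument, the upper bound $\limsup_{\epsilon\to 0}c_\epsilon\le c_{x_0}$ is not established, so you should supply it. The remaining steps---bounding and extracting a limit $t_\epsilon\to t_0>0$ for the Nehari projection, the mountain pass argument, and the positivity via testing against $(-U_\epsilon^-,-V_\epsilon^-)$ and the singular-integral (or maximum-principle) argument---match the paper.
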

\begin{proof}
 Let $(U,V)$ be a positive vector ground state of \eqref{autonomoussys} with  $x_0\in \Lambda$, then there exists $r>0$ such that
$B_r(x_o)\subset\subset\Lambda.$ Take a cut-off function  $\eta\in C_c^\infty (\overline{\mathbb R^{N+1}_+})$
such that
$\eta(x,y)=1$ if $(x,y)\in \mathcal B_{\frac{r}{2}}^+,$ $\eta(x,y)=0$ if $(x,y)\in \overline{\mathbb R^{N+1}_+}\setminus\mathcal B_{r}^+,$ $1\leq\eta(x,y)\leq1$ and $|\nabla \eta(x,y)|\leq \frac{C}{r}$ with constant $C>0$ for any $(x,y)\in \overline{\mathbb R^{N+1}_+}.$ Set
\begin{equation}\label{wide}
\left(\widetilde{U}_{\epsilon,x_0}(x,y),\widetilde{V}_{\epsilon,x_0}(x,y)\right)=\left(U\left(x-\frac{x_0}{\epsilon},y\right)
\eta(\epsilon x-x_0, \epsilon y),V\left(x-\frac{x_0}{\epsilon},y\right)\eta(\epsilon x-x_0, \epsilon y)\right).
\end{equation}
We omit the subscript $x_0$ for convenience.
Then   there exists $t_\epsilon>0$ such that $(t_\epsilon\widetilde{U}_\epsilon,t_\epsilon\widetilde{V}_\epsilon)\in N_\epsilon.$ Moreover, by a straight calculation, there exist constants $\theta,\widetilde C>0$ independent of $\epsilon$
such that $$0<\theta \leq ||(\widetilde{U}_\epsilon,\widetilde{V}_\epsilon)||_{X_{s,\epsilon}}\leq \widetilde C. $$
Noting that $supp~ \tilde{u}_\epsilon\subset B_{\frac{r}{\epsilon}}(\frac{x_0}{\epsilon})\subset\subset\Lambda_\epsilon$
and $supp~ \tilde{v}_\epsilon\subset B_{\frac{r}{\epsilon}}(\frac{x_0}{\epsilon})\subset\subset\Lambda_\epsilon,$ we obtain by a change of variables $\bar{x}=x-\frac{x_0}{\epsilon}$ that
\begin{equation*}
\int_{\Lambda_\epsilon}(\widetilde u_\epsilon^+)^{p+1}dx\geq \int_{B_{\frac{r}{2\epsilon}}}u(\bar x)^{p+1}d\bar x
\geq \int_{B_{\frac{r}{2}}}u(\bar x)^{p+1}d\bar x:=c_1 >0.
\end{equation*}
Similar arguments to \eqref{64} further implies that $\{t_\epsilon\} $ is bounded.
Moreover, if there exists a subsequence denoted still by $\{t_\epsilon\}$ such that $t_\epsilon\to t_0$ as $\epsilon\to 0,$
then $t_0>0$ by a  similar argument to \eqref{2}.
Furthermore, from  a straight calculation and  changes of variables, we conclude that
\begin{align*}
I_\epsilon(t_\epsilon\widetilde{U}_\epsilon,t_\epsilon\widetilde{V}_\epsilon)=&\frac{t_\epsilon^2}{2}\int_{\mathbb R^{N+1}_+}y^{1-2s}|\nabla U(\bar{x},y)|^2|\eta(\epsilon \bar{x},\epsilon y)|^2d\bar{x}dy\\
&+\frac{t_\epsilon^2}{2}\int_{\mathbb R^{N+1}_+}y^{1-2s}|\nabla V(\bar{x},y)|^2|\eta(\epsilon \bar{x},\epsilon y)|^2d\bar{x}dy\\
&+\frac{t_\epsilon^2\epsilon^2}{2}\int_{\mathbb R^{N+1}_+}y^{1-2s}|\nabla \eta(\epsilon\bar{x},\epsilon y)|^2|U( \bar{x}, y)|^2d\bar{x}dy\\
&+\frac{t_\epsilon^2\epsilon^2}{2}\int_{\mathbb R^{N+1}_+}y^{1-2s}|\nabla \eta(\epsilon\bar{x},\epsilon y)|^2|V( \bar{x}, y)|^2d\bar xdy\\
&+t_\epsilon\epsilon \int_{\mathbb R^{N+1}_+}y^{1-2s}\nabla \eta(\epsilon\bar{x},\epsilon y)\cdot \nabla U(\bar{x},y)  U( \bar{x}, y)\eta(\epsilon\bar{x},\epsilon y) d\bar{x}dy\\
&+t_\epsilon\epsilon \int_{\mathbb R^{N+1}_+}y^{1-2s}\nabla \eta(\epsilon\bar{x},\epsilon y)\cdot \nabla V(\bar{x},y)  V( \bar{x}, y)\eta(\epsilon\bar{x},\epsilon y) d\bar{x}dy\\
&+\frac{t_\epsilon^2}{2}\int_{\mathbb R^N}a(\epsilon \bar{x}+x_0) (u(\bar{x}))^2(\eta(\epsilon \bar{x},0))^2d\bar{x}\\
&+\frac{t_\epsilon^2}{2}\int_{\mathbb R^N}b(\epsilon \bar{x}+x_0) (v(\bar{x}))^2(\eta(\epsilon \bar{x},0))^2d\bar{x}\\
&-\frac{t_\epsilon^{p+1}}{p+1}\int_{\mathbb R^N}(u(\bar{x}))^{p+1}(\eta(\epsilon \bar{x},0))^{p+1}d\bar{x}
-\frac{t_\epsilon^{2_s^*}}{2_s^*}\int_{\mathbb R^N}(v(\bar{x}))^{2_s^*}(\eta(\epsilon \bar{x},0))^{2_s^*}d\bar{x}\\
&-\lambda t_\epsilon^2\int_{\mathbb R^N}u(\bar{x})v(\bar{x})(\eta(\epsilon \bar{x},0))^2d\bar{x}.
\end{align*}
By Lemma \ref{59}, H\"{o}lder inequality and $\gamma=1+\frac{2}{N-2s},$  we have
\begin{align*}
&\frac{t_\epsilon^2\epsilon^2}{2}\int_{\mathbb R^{N+1}_+}y^{1-2s}|\nabla \eta(\epsilon\bar{x},\epsilon y)|^2|U( \bar{x}, y)|^2d\bar{x}dy\\
\leq&\frac{Ct_\epsilon^2\epsilon^2}{2r^2}\left(\int_{\mathcal B^+_{\frac{r}{\epsilon}}\setminus\mathcal B^+_{\frac{r}{2\epsilon}}}y^{1-2s} U^{2\gamma}d\bar{x}dy\right)^{\frac{1}{\gamma}}
\left(\int_{\mathcal B^+_{\frac{r}{\epsilon}}\setminus\mathcal B^+_{\frac{r}{2\epsilon}}}y^{1-2s}d\bar{x}dy\right)^{\frac{\gamma-1}{\gamma}}\\
\leq&\frac{Ct_\epsilon^2}{2}\left(\int_{\mathcal B^+_{\frac{r}{\epsilon}}\setminus\mathcal B^+_{\frac{r}{2\epsilon}}}y^{1-2s} U^{2\gamma}d\bar{x}dy\right)^{\frac{1}{\gamma}}
\to 0 \quad{as~} \epsilon\to 0,
\end{align*}
and
\begin{equation*}
\begin{aligned}
&\epsilon \int_{\mathbb R^{N+1}_+}y^{1-2s}\nabla \eta(\epsilon\bar{x},\epsilon y)\cdot \nabla U(\bar{x},y)  U( \bar{x}, y)\eta(\epsilon\bar{x},\epsilon y) d\bar{x}dy\\
\leq&\frac{\epsilon}{r}\left(\int_{\mathcal B^+_{\frac{r}{\epsilon}}\setminus\mathcal B^+_{\frac{r}{2\epsilon}}}
y^{1-2s}|\nabla U|^2d\bar{x}dy\right)^{\frac{1}{2}}
\left(\int_{\mathcal B^+_{\frac{r}{\epsilon}}\setminus\mathcal B^+_{\frac{r}{2\epsilon}}}y^{1-2s} U^{2\gamma}d\bar{x}dy\right)^{\frac{1}{2\gamma}}
\left(\int_{\mathcal B^+_{\frac{r}{\epsilon}}\setminus\mathcal B^+_{\frac{r}{2\epsilon}}}y^{1-2s}d\bar{x}dy\right)^{\frac{\gamma-1}{2\gamma}}\\
\leq&C\left(\int_{\mathcal B^+_{\frac{r}{\epsilon}}\setminus\mathcal B^+_{\frac{r}{2\epsilon}}}
y^{1-2s}|\nabla U|^2d\bar{x}dy\right)^{\frac{1}{2}}
\left(\int_{\mathcal B^+_{\frac{r}{\epsilon}}\setminus\mathcal B^+_{\frac{r}{2\epsilon}}}y^{1-2s} U^{2\gamma}d\bar{x}dy\right)^{\frac{1}{2\gamma}}
\to0\quad{as~} \epsilon\to 0.
\end{aligned}
\end{equation*}
Similarly, we obtain that
$$\frac{t_\epsilon^2\epsilon^2}{2}\int_{\mathbb R^{N+1}_+}y^{1-2s}|\nabla \eta(\epsilon\bar{x},\epsilon y)|^2|V( \bar{x}, y)|^2d\bar{x}dy\to 0\quad{as~} \epsilon\to 0$$
and
$$\epsilon \int_{\mathbb R^{N+1}_+}y^{1-2s}\nabla \eta(\epsilon\bar{x},\epsilon y)\cdot \nabla V(\bar{x},y)  V( \bar{x}, y)\eta(\epsilon\bar{x},\epsilon y) d\bar{x}dy\to 0\quad{as~} \epsilon\to 0.$$
It further follows from the dominated convergence theorem  that
\begin{equation}\label{9}
\begin{aligned}
\limsup_{\epsilon\to 0}I_\epsilon(t_\epsilon\widetilde{U}_\epsilon,t_\epsilon\widetilde{V}_\epsilon)
=&\frac{t_0^2}{2}\int_{\mathbb R^{N+1}_+}y^{1-2s}|\nabla U|^2dxdy+
\frac{t_0^2}{2}\int_{\mathbb R^{N+1}_+}y^{1-2s}|\nabla V|^2dxdy\\
&+\frac{t_0^2a(x_0)}{2}\int_{\mathbb R^N} u^2d{x}
+\frac{t_0^2b(x_0)}{2}\int_{\mathbb R^N} v^2d{x}
-\frac{t_0^{p+1}}{p+1}\int_{\mathbb R^N}u^{p+1}d{x}\\
&-\frac{t_0^{2_s^*}}{2_s^*}\int_{\mathbb R^N}v^{2_s^*}dx
-\lambda t_0^2\int_{\mathbb R^N}uvd{x}\\
=&I_{x_0}(t_0U,t_0V)\\
\leq&c_{x_0}.
\end{aligned}
\end{equation}
As a consequence,
$$\limsup_{\epsilon\to0}c_\epsilon\leq \limsup_{\epsilon\to0}I_\epsilon(t_\epsilon\widetilde{U}_\epsilon,t_\epsilon\widetilde{V}_\epsilon)\leq c_{x_0}. $$

Now we proceed to show that system \eqref{mp} admits a positive vector ground state.
Owing to Lemma \ref{mmp}, Lemma \ref{pscondition} and the mountain pass theorem, we find that
$I_\epsilon$  admits a critical point $(U_\epsilon,V_\epsilon)\in X_{s,\epsilon}$ such that $I_\epsilon(U_\epsilon,V_\epsilon)=c_\epsilon.$ By virtue of Lemma \ref{eeq},  $(U_\epsilon,V_\epsilon)$ is a ground
state for $I_\epsilon.$ Next, we prove that $u_\epsilon>0$ and $v_\epsilon>0$ in $\mathbb R^N.$
Observe that $I_\epsilon^\prime (U_\epsilon,V_\epsilon)(-U_\epsilon^-,-V_\epsilon^-)=0$, then
\begin{equation*}
\begin{aligned}
&\int_{\mathbb R^{N+1}_+} y^{1-2s} |\nabla U^-_\epsilon|^2 dxdy+\int_{\mathbb R^{N+1}_+} y^{1-2s} |\nabla V^-_\epsilon|^2 dxdy+\int_{\mathbb R^N}a(\epsilon x)(u^-_\epsilon)^2 dx
 \int_{\mathbb R^N}b(\epsilon x)(v^-_\epsilon)^2 dx\\
  \leq&2\lambda \int_{\mathbb R^N}u^-_\epsilon v^-_\epsilon dx,
 \end{aligned}
\end{equation*}
which along with  $\lambda\leq(1-\delta_0) \sqrt{a_0b_0}$ and (P1) shows that
$||(U_\epsilon^-,V_\epsilon^-)||_{X_{s,\epsilon}}=0.$
As a result, $u_\epsilon\geq0,v_\epsilon\geq0$ in $\mathbb R^N.$ Assume there exists $\bar{x}\in\mathbb R^N$ such that
$u_\epsilon(\bar{x})=0,$ then
\begin{equation}\label{10}
(-\triangle)^su_{\epsilon}(\bar{x})=C_{N,s}PV\int_{\mathbb R^N}\frac{u_\epsilon(\bar{x})-u_\epsilon(y)}{|\bar{x}-y|^{N+2s}}dy\leq 0.
\end{equation}
On the other hand,
$$(-\triangle)^su_{\epsilon}(\bar{x})=\lambda v_{\epsilon}(\bar{x})\geq 0.$$
Therefore, $(-\triangle)^su_{\epsilon}(\bar{x})=0,$ and then $u_\epsilon=0$ in $\mathbb R^N$.
By \eqref{mp}, we further see $v_\epsilon=0$ in $\mathbb R^N.$ Thanks to the extension formula \eqref{56}, there holds $(U_\epsilon,V_\epsilon)=(0,0).$ This contradicts (I) in Lemma \ref{mmp}. The proof is complete.
\end{proof}

\section{The autonomous system}

In this section, we discuss  the ground state of  the linearly coupled autonomous system \eqref{autonomoussys}.
By the extension method, we  translate \eqref{autonomoussys}
  into
 \begin{equation}\label{eqautonomoussys}
 \begin{cases}
 -{\rm div} (y^{1-2s}\nabla U)=0\quad &\text{in} \ \mathbb R^{N+1}_+,\\
 -{\rm div} (y^{1-2s}\nabla V)=0& \text{in}\ \mathbb R^{N+1}_+,\\
 -\mathop{\lim}\limits_{y\to0 }y^{1-2s}\frac{\partial U(x,y)}{\partial y}+a(x_0)u=u^p+\lambda v &\text{on}\ \mathbb R^N,\\
 -\mathop{\lim}\limits_{y\to0 }y^{1-2s}\frac{\partial V(x,y)}{\partial y}+b(x_0)v=v^{2_s^*-1}+\lambda u\quad &\text{on}\ \mathbb R^N.
 \end{cases}
 \end{equation}
 The Euler-Lagrange functional associated to system \eqref{eqautonomoussys} is given by
 \begin{equation}\label{aueuler}
 \begin{aligned}
 J_{x_0}(U,V)=&\frac{1}{2}\int_{\mathbb R^{N+1}_+} y^{1-2s} |\nabla U|^2 dxdy+\frac{1}{2}\int_{\mathbb R^{N+1}_+} y^{1-2s} |\nabla V|^2 dxdy+\frac{1}{2}\int_{\mathbb R^N}a(x_0)u^2 dx \\
 &+\frac{1}{2}\int_{\mathbb R^N}b( x_0)v^2 dx+\frac{1}{p+1}\int_{\mathbb R^N}u^{p+1}dx +\frac{1}{2_s^*}
 \int_{\mathbb R^N}v^{2_s^*}dx +\lambda\int_{\mathbb R^N}uvdx.
 \end{aligned}
 \end{equation}
To consider system \eqref{eqautonomoussys}, we  define a Hilbert  space
$$X_{s,x_0}=\{(U,V)\in \chi_s\times\chi_s:||(U,V)||_{X_{s,x_0}}<\infty\},$$
 where
 $$||(U,V)||_{X_{s,x_0}}=\left(||U||_{a,x_0}^2+||V||_{b,x_0}^2\right)^{\frac{1}{2}},$$
 $$||U||_{a,x_0}^2 :=\int_{\mathbb R^{N+1}_+}y^{1-2s}|\nabla U|^2dxdy +\int_{\mathbb R^N}a( x_0)u^2dx,$$
 $$||V||_{b,x_0}^2 :=\int_{\mathbb R^{N+1}_+}y^{1-2s}|\nabla V|^2dxdy +\int_{\mathbb R^N}b(x_0)v^2dx.$$
 Since we are interested in the positive vector solution, we  first discuss the functional $I_{x_0}:X_{s,x_0}\to \mathbb R$
 defined as
 \begin{equation}\label{modaueuler}
 \begin{aligned}
 I_{x_0}(U,V)=&\frac{1}{2}\int_{\mathbb R^{N+1}_+} y^{1-2s} |\nabla U|^2 dxdy+\frac{1}{2}\int_{\mathbb R^{N+1}_+} y^{1-2s} |\nabla V|^2 dxdy+\frac{1}{2}\int_{\mathbb R^N}a(x_0)u^2 dx \\
 &+\frac{1}{2}\int_{\mathbb R^N}b( x_0)v^2 dx+\frac{1}{p+1}\int_{\mathbb R^N}(u^+)^{p+1}dx +\frac{1}{2_s^*}
 \int_{\mathbb R^N}(v^+)^{2_s^*}dx +\lambda\int_{\mathbb R^N}uvdx.
 \end{aligned}
 \end{equation}
 It is easy to see that if $I_{x_0}$ has a critical point $(U,V)$ satisfying  $u\geq0$
 and $v\geq0$ in $\mathbb R^N,$ then it is a critical point of $J_{x_0}$ as well.
 The Nehari manifold associated to $I_{x_0}$ is given by
 $$N_{x_0}:=\{(U,V)\in X_{s,x_0}:I^\prime_{x_0}(U,V)(U,V)=0\}.$$

 \begin{remark}\rm
 By a minor modification, we can prove that the results of Lemma \ref{mmp}, (I) and (III) in Lemma \ref{nehari} also hold for $I_{x_0}$ and $N_{x_0}.$ Moreover,
 For any $(U,V)\in N_{x_0},$  we have
$$|supp\ u^+|+|supp\ v^+|>0.$$
 \end{remark}
Now we define
$$X_{s,x_0}^+=\{(U,V)\in X_{s,x_0}:|supp\ u^+|+|supp\ v^+|>0\},$$
and let $S_{s,x_0}^+$ be the intersect of $X_{s,x_0}^+$ with the unit sphere $S_{s,x_0},$ namely,
$S_{s,x_0}^+=S_{s,x_0}\cap X_{s,x_0}^+.$
\begin{remark}\rm
The results in Lemmas \ref{Xepsilon}, \ref{eeq} and  \ref{sphere} hold as well if we replace the subscript $\epsilon$ by $x_0$.
\end{remark}

\begin{lemma}\label{liebtype}
If there exist a bounded sequence $\{U_n\}\subset \chi_s$  and $U\in\chi_s$ such that
$$ U_n\to U~~\text{a.e. in}~\mathbb R^{N+1}_+. $$
Then
$$\lim_{n\to\infty}\left(||U_n||^2_{\chi_s}-||U_n-U||^2_{\chi_s}\right)=||U||^2_{\chi_s}.$$
\end{lemma}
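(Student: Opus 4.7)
The plan is to reduce this to the standard algebraic identity
$$\|U_n\|_{\chi_s}^2 - \|U_n - U\|_{\chi_s}^2 = 2\langle U_n, U\rangle_{\chi_s} - \|U\|_{\chi_s}^2,$$
where $\langle \cdot,\cdot\rangle_{\chi_s}$ denotes the inner product inducing $\|\cdot\|_{\chi_s}$, namely $\langle U,V\rangle_{\chi_s} = \int_{\mathbb R^{N+1}_+} y^{1-2s}\nabla U\cdot\nabla V\,dxdy$ (the constant $k_s$ is omitted as agreed in Section 2). Once I know that $U_n\rightharpoonup U$ weakly in $\chi_s$, the right-hand side tends to $2\|U\|_{\chi_s}^2 - \|U\|_{\chi_s}^2 = \|U\|_{\chi_s}^2$, which is exactly the desired conclusion.

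So the heart of the proof is the identification of the weak limit. First, since $\{U_n\}$ is bounded in the Hilbert space $\chi_s$, up to a subsequence $U_n \rightharpoonup \widetilde U$ for some $\widetilde U\in\chi_s$. By part (IV) of Lemma \ref{59}, a further subsequence converges strongly in $L^2_{\mathrm{loc}}(y^{1-2s},\mathbb R^{N+1}_+)$, hence pointwise a.e.\ on $\mathbb R^{N+1}_+$ to $\widetilde U$. Combined with the hypothesis $U_n\to U$ a.e., uniqueness of pointwise limits forces $\widetilde U = U$ a.e. To promote this from a subsequence to the full sequence, I run the standard subsequence-of-subsequence argument: every subsequence of $\{U_n\}$ admits a further subsequence weakly converging in $\chi_s$, and the above reasoning pins any such weak limit down to $U$; hence the whole sequence satisfies $U_n\rightharpoonup U$ in $\chi_s$.

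With weak convergence in hand, I expand
\begin{align*}
\|U_n\|_{\chi_s}^2 - \|U_n - U\|_{\chi_s}^2
&= \int_{\mathbb R^{N+1}_+} y^{1-2s}\bigl(|\nabla U_n|^2 - |\nabla(U_n-U)|^2\bigr)dxdy \\
&= 2\int_{\mathbb R^{N+1}_+} y^{1-2s}\nabla U_n\cdot\nabla U\,dxdy - \|U\|_{\chi_s}^2.
\end{align*}
Testing the weak convergence $U_n\rightharpoonup U$ against $U\in\chi_s$ gives
$\int_{\mathbb R^{N+1}_+} y^{1-2s}\nabla U_n\cdot\nabla U\,dxdy \to \|U\|_{\chi_s}^2$,
so the right-hand side converges to $\|U\|_{\chi_s}^2$, as required.

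The only genuine obstacle is the weak-limit identification: I cannot invoke Brezis--Lieb directly on $|\nabla U_n|^2$ because only $U_n$ (not $\nabla U_n$) is assumed to converge a.e. The local compactness statement of Lemma \ref{59}(IV) is what bridges this gap, converting boundedness plus a.e.\ convergence of the values into enough information to pin down the weak limit. Everything else is an identity together with the definition of weak convergence, so no delicate estimates are needed.
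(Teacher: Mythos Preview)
Your proof is correct and takes a genuinely different---and cleaner---route than the paper. The paper attacks the stronger assertion that the integrand $y^{1-2s}\bigl||\nabla U_n|^2-|\nabla(U_n-U)|^2-|\nabla U|^2\bigr|$ tends to $0$ in $L^1$, via a Brezis--Lieb type splitting and an application of dominated convergence to the auxiliary function $h_n^\varepsilon$. That DCT step, however, silently requires $\nabla U_n\to\nabla U$ a.e., which is \emph{not} part of the stated hypothesis (only $U_n\to U$ a.e.\ is assumed), so as written the paper's argument has a gap. You sidestep this completely by exploiting the Hilbert structure: since $\|U_n\|_{\chi_s}^2-\|U_n-U\|_{\chi_s}^2=2\langle U_n,U\rangle_{\chi_s}-\|U\|_{\chi_s}^2$, the entire lemma reduces to establishing $U_n\rightharpoonup U$ in $\chi_s$, and you obtain this from boundedness plus the local compactness of Lemma~\ref{59}(IV). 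One small point worth making explicit: to identify the weak $\chi_s$ limit $\widetilde U$ with the strong $L^2_{\mathrm{loc}}(y^{1-2s})$ limit, you are implicitly using the continuous embedding $\chi_s\hookrightarrow L^{2\gamma}(y^{1-2s})$ from Lemma~\ref{59}(III), so that weak convergence in $\chi_s$ passes to weak convergence against compactly supported test functions and hence agrees with the strong $L^2_{\mathrm{loc}}$ limit. With that remark your argument is complete; what the paper's approach would buy, under the additional assumption of a.e.\ convergence of gradients, is the $L^1$ convergence of the integrand itself, but for the lemma as stated and for its sole application in Lemma~\ref{embedding} your weak-convergence argument is both sufficient and more robust.
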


\begin{proof}
In fact, we only need to prove
$$\lim_{n\to\infty}\int_{\mathbb{R}^{N+1}_+}y^{1-2s}\left||\nabla U_n|^2-|\nabla U_n-\nabla U|^2-|\nabla U|^2\right|dxdy=0.$$
Note that
\begin{equation*}
\begin{aligned}
\left||\nabla U_n|^2-|\nabla U_n-\nabla U|^2\right|
=&\left||(\nabla U_n-\nabla U)+\nabla U|^2-|\nabla U_n-\nabla U|^2\right|\\
\leq&\max\left\{\left|\left(|\nabla U_n-\nabla U|+|\nabla U|\right)^2-|\nabla U_n-\nabla U|^2\right|, \right .\\
&\left .~~~~~~~ \left|\left(|\nabla U_n-\nabla U|-|\nabla U|\right)^2-|\nabla U_n-\nabla U|^2\right|\right\}.
\end{aligned}
\end{equation*}
Since for any $\epsilon>0$, there is $C(\epsilon)>0$ such that
$$\left||a+b|^2-|a|^2\right|\leq \epsilon |a|^2+C(\epsilon)|b|^2\quad \text{for any }a,b\in\mathbb R.$$
It then follows that
\begin{equation*}
\begin{aligned}
\left||\nabla U_n|^2-|\nabla U_n-\nabla U|^2\right|\leq \epsilon |\nabla U_n-\nabla U|^2+C(\epsilon)|\nabla U|^2.
\end{aligned}
\end{equation*}
As a consequence, we have
$$h^\epsilon_n:=\left(\left||\nabla U_n|^2-|\nabla U_n-\nabla U|^2-|\nabla U|^2\right|-\epsilon |\nabla U_n-\nabla U|^2\right)^+\leq (C(\epsilon)+1)
|\nabla U|^2.$$
The dominated convergence theorem then implies that
\begin{equation}\label{13}
\lim_{n\to \infty}\int_{\mathbb{R}^{N+1}_+}y^{1-2s} h^\epsilon_ndxdy=0.
\end{equation}
Since $\{U_n\}\subset \chi_s$ is bounded, we conclude that
$$\epsilon\int_{\mathbb R^{N+1}_+}y^{1-2s}|\nabla U_n-\nabla U|^2dxdy\leq C\epsilon,$$
which along with \eqref{13} and the definition of $h^\epsilon_n$   implies that
\begin{equation*}
\begin{aligned}
\lim_{n\to\infty}\int_{\mathbb{R}^{N+1}_+}y^{1-2s}\left||\nabla U_n|^2-|\nabla U_n-\nabla U|^2-|\nabla U|^2\right|dxdy
\leq&\lim_{n\to \infty}\int_{\mathbb{R}^{N+1}_+}y^{1-2s} h^\epsilon_ndxdy+C\epsilon
=C\epsilon.
\end{aligned}
\end{equation*}
Letting $\epsilon\to0$, we get the result. The proof is complete.
\end{proof}

We now state a useful lemma which is a direct result of   \cite[Lemma 3.3]{he-zou} as follows.
\begin{lemma}\label{concnetration2}
Let $\{(U_n,V_n)\}\subset \chi_{s}\times \chi_{s}$ be bounded and satisfy
$$\lim_{n\to\infty}\sup_{z\in\mathbb R^N}\int_{B_r(z)}u_n^2+v_n^2dx=0\quad\text{for some~} r>0.$$
Then
$$(u_n,v_n)\to (0,0)\quad \text{in}~ L^s(\mathbb R^N)\times L^t(\mathbb R^N),~\forall s,t\in (2,2_s^*).$$
\end{lemma}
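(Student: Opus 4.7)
The plan is to deduce the vector statement from the scalar version of the Lions-type vanishing lemma, namely \cite[Lemma 3.3]{he-zou}, by decoupling the two components. Since pointwise $u_n^2, v_n^2 \ge 0$, the assumed joint vanishing
\[
\lim_{n\to\infty}\sup_{z\in\mathbb R^N}\int_{B_r(z)}(u_n^2+v_n^2)\,dx=0
\]
immediately gives the two separate vanishing conditions
\[
\lim_{n\to\infty}\sup_{z\in\mathbb R^N}\int_{B_r(z)}u_n^2\,dx=0,\qquad \lim_{n\to\infty}\sup_{z\in\mathbb R^N}\int_{B_r(z)}v_n^2\,dx=0.
\]
Likewise, boundedness of $\{(U_n,V_n)\}$ in $\chi_s\times\chi_s$ forces each of $\{U_n\}$ and $\{V_n\}$ to be bounded separately in $\chi_s$, so by Lemma~\ref{59}(I) each of $\{u_n\}$ and $\{v_n\}$ is bounded in $L^{2_s^*}(\mathbb R^N)$.

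With this decoupling in place, I would apply \cite[Lemma 3.3]{he-zou} twice: once to $\{U_n\}$ to obtain $u_n\to 0$ in $L^s(\mathbb R^N)$ for every $s\in(2,2_s^*)$, and once to $\{V_n\}$ to obtain $v_n\to 0$ in $L^t(\mathbb R^N)$ for every $t\in(2,2_s^*)$. Concatenating the two conclusions is exactly the stated product-space convergence $(u_n,v_n)\to(0,0)$ in $L^s(\mathbb R^N)\times L^t(\mathbb R^N)$.

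For context, the bulk of the real work is packaged inside the scalar lemma: one covers $\mathbb R^N$ by balls $B_r(z_i)$ of uniformly bounded overlap, applies Hölder interpolation on each ball in the form $\|w\|_{L^s(B_r(z_i))}^s \leq \|w\|_{L^2(B_r(z_i))}^{2\theta}\|w\|_{L^{2_s^*}(B_r(z_i))}^{2_s^*(1-\theta)}$ with $\theta=(2_s^*-s)/(2_s^*-2)\in(0,1)$, and then sums via a further Hölder, using the global $L^{2_s^*}$-bound from Lemma~\ref{59}(I) to control the critical factor and factoring out the small quantity $\sup_{z}\|w\|_{L^2(B_r(z))}$ from the subcritical factor. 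This interpolation-and-covering step, carried out in \cite{he-zou}, is the only delicate ingredient; in the vector setting the sole new observation needed is that nonnegativity of $u_n^2$ and $v_n^2$ transfers the joint vanishing to each component. Consequently I anticipate no additional technical obstacle beyond what is already dispatched by the cited reference.
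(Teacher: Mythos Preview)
Your proposal is correct and matches the paper's approach exactly: the paper does not give a proof but simply states the lemma as ``a direct result of \cite[Lemma 3.3]{he-zou},'' and your decoupling of the two components via the nonnegativity of $u_n^2,v_n^2$ is precisely how one makes that reduction explicit. One minor point: the $L^{2_s^*}$ bound on the traces from boundedness in $\chi_s$ follows from part (II) of Lemma~\ref{59} (the trace inequality), not part (I), which concerns the space $\mathcal E$.
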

We define a constant
\begin{equation}\label{sharpconstant}
C_{p+1}:=\inf_{U\in\mathcal E}\frac{\int_{\mathbb R^{N+1}_+}y^{1-2s}|\nabla U|^2dxdy+\int_{\mathbb R^N}u^2dx}
{\left(\int_{\mathbb R^N}|u|^{p+1}dx\right)^{\frac{2}{p+1}}},
\end{equation}
where the Hilbert space $\mathcal E$ is defined in \eqref{56}.
\begin{lemma}\label{embedding} If $p\in [1,2_s^*-1),$ then
there exists a nonnegative function  $U\in\mathcal E$ such that the constant $C_{p+1}$ defined by \eqref{sharpconstant} is achieved at $U$.
\end{lemma}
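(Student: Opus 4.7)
The plan is to run a direct minimization argument, using concentration--compactness to overcome the loss of compactness on $\mathbb{R}^N$. First I would take a minimizing sequence $\{U_n\} \subset \mathcal{E}$ for \eqref{sharpconstant} with the normalization $\|u_n\|_{p+1} = 1$, so that $\|U_n\|_{\mathcal{E}}^2 \to C_{p+1}$. In particular $\{U_n\}$ is bounded in $\mathcal{E}$, hence in $\chi_s$, and (passing to a subsequence) weakly converges in $\chi_s$ and pointwise a.e. in $\mathbb{R}^{N+1}_+$ to some limit; the question is whether this limit is nontrivial.

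To rule out vanishing, I would argue by contradiction. If
$$\lim_{n\to\infty}\sup_{z\in\mathbb{R}^N}\int_{B_r(z)} u_n^2\,dx = 0 \quad \text{for every } r>0,$$
then Lemma \ref{concnetration2} (applied with $V_n = 0$) would give $u_n \to 0$ in $L^{p+1}(\mathbb{R}^N)$ (since $p+1 \in (2, 2_s^*)$ as $p \in (1, 2_s^*-1)$), contradicting $\|u_n\|_{p+1} = 1$. Hence there exist $r, \beta > 0$ and points $z_n \in \mathbb{R}^N$ with $\int_{B_r(z_n)} u_n^2\,dx \geq \beta$. Replacing $U_n$ by the translate $\widetilde{U}_n(x,y) := U_n(x+z_n,y)$ (which leaves all the quantities in \eqref{sharpconstant} invariant), I obtain a new minimizing sequence whose weak limit $U \in \mathcal{E}$ satisfies $\int_{B_r} u^2\,dx \geq \beta > 0$ thanks to the local compactness part of Lemma \ref{59}(I). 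In particular, $u \not\equiv 0$, and since $\mathcal{E} \hookrightarrow L^{p+1}(\mathbb{R}^N)$, one has $b := \|u\|_{p+1}^{p+1} > 0$.

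Next, I would decompose. Write $R_n := \widetilde{U}_n - U$, so that $R_n \rightharpoonup 0$ in $\chi_s$ and $r_n := \widetilde{u}_n - u \to 0$ a.e. From Lemma \ref{liebtype} and the Brezis--Lieb lemma applied on $\mathbb{R}^N$ in the $L^2$ and $L^{p+1}$ norms, I get the splittings
\begin{align*}
\|\widetilde{U}_n\|_{\chi_s}^2 &= \|R_n\|_{\chi_s}^2 + \|U\|_{\chi_s}^2 + o(1), \\
\|\widetilde{u}_n\|_2^2 &= \|r_n\|_2^2 + \|u\|_2^2 + o(1), \\
\|\widetilde{u}_n\|_{p+1}^{p+1} &= \|r_n\|_{p+1}^{p+1} + \|u\|_{p+1}^{p+1} + o(1).
\end{align*}
Setting $a_n := \|r_n\|_{p+1}^{p+1}$, one has $a_n + b + o(1) = 1$, so (along a further subsequence) $a_n \to a = 1-b \in [0,1]$. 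Applying the definition of $C_{p+1}$ to $R_n$ and to $U$ individually gives
$$\|\widetilde{U}_n\|_{\mathcal{E}}^2 = \|R_n\|_{\mathcal{E}}^2 + \|U\|_{\mathcal{E}}^2 + o(1) \geq C_{p+1}\bigl(a_n^{2/(p+1)} + b^{2/(p+1)}\bigr) + o(1),$$
and letting $n \to \infty$ yields
$$1 \geq a^{2/(p+1)} + b^{2/(p+1)}, \qquad a + b = 1, \ a,b \geq 0, \ b > 0.$$

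The final step is the subcritical concavity argument, which is the key point where $p+1 < 2_s^*$ matters. Because $2/(p+1) < 1$, the function $t \mapsto t^{2/(p+1)}$ is strictly concave with $0^{2/(p+1)} + 1^{2/(p+1)} = 1$, hence $a^{2/(p+1)} + b^{2/(p+1)} \geq 1$ with equality only if $a = 0$ or $b = 0$. Combined with the displayed inequality, this forces $a = 0$ (since $b > 0$), so $b = 1$ and $\|u\|_{p+1} = 1$. Then
$$C_{p+1} \leq \|U\|_{\mathcal{E}}^2 \leq \liminf_{n\to\infty} \|\widetilde{U}_n\|_{\mathcal{E}}^2 = C_{p+1}$$
by weak lower semicontinuity, so $U$ achieves $C_{p+1}$. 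Finally, replacing $U$ by $|U|$ preserves the denominator $\|u\|_{p+1}$ (since the trace of $|U|$ is $|u|$) while $\||U|\|_{\chi_s} \leq \|U\|_{\chi_s}$ and $\||u|\|_2 = \|u\|_2$, yielding a nonnegative minimizer. The main obstacle is the potential splitting of mass in the Brezis--Lieb decomposition; it is overcome precisely by the strict subadditivity coming from $p+1 \in (2, 2_s^*)$, which would fail at the critical exponent $p+1 = 2_s^*$.
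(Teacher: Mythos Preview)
Your proof is correct and follows essentially the same approach as the paper: take a normalized minimizing sequence, rule out vanishing via Lemma~\ref{concnetration2}, translate, apply the Brezis--Lieb splitting (for the $\chi_s$-part via Lemma~\ref{liebtype}), and use the strict concavity of $t\mapsto t^{2/(p+1)}$ to conclude that no mass is lost, then pass to $|U|$ for nonnegativity. The only cosmetic difference is that you make the concavity step more explicit, while the paper additionally records (after the proof) that a suitable scalar multiple of the minimizer solves the associated Euler--Lagrange equation and computes its energy.
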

\begin{proof}
Noting that
$$C_{p+1}=\inf_{\substack{U\in\mathcal E\\||u||_{p+1}=1}}\left(\int_{\mathbb R^{N+1}_+}y^{1-2s}|\nabla U|^2dxdy+\int_{\mathbb R^N}u^2dx\right).$$
First, it follows from the interpolation theorem that, for any $U\in\mathcal E$ with $||u||_{p+1}=1$,
$$1=||u||_{p+1}^{p+1}=||u||_{2_s^*}^{\frac{N(p-1)}{2s}}||u||_{2}^{p+1-\frac{N(p-1)}{2s}}\leq C||U||_{\mathcal E}^{\frac{N(p-1)}{2s}}||u||_{2}^{p+1-\frac{N(p-1)}{2s}}
\leq C||U||_{\mathcal E}^{p+1}, $$
where $C$ is independent of $U.$ Hence $C_{p+1}\geq\frac{1}{C}>0.$
Let $\{U_n\}\subset \mathcal E$ with $||u_n||_{p+1}=1$ be such that
$$\int_{\mathbb R^{N+1}_+}y^{1-2s}|\nabla U_n|^2dxdy+\int_{\mathbb R^N}u_n^2dx\to C_{p+1}\quad \text{as~} n\to \infty.$$
In view of Lemma \ref{concnetration2}, there exist $\{z_n\}\subset\mathbb R^N$ and $\theta>0$ such that
$$\liminf_{n\to\infty}\int_{B_1(z_n)}u_n^2dx\geq\theta>0.$$
Now, we define
$$\widetilde{U}_n(x,y)=U_n(x+z_n,y)\quad\text{for any~} (x,y)\in\mathbb R^{N+1}_+.$$
Then $||\tilde{u}_n||_{p+1}=1$ and
$$\lim_{n\to\infty}||\widetilde{U}_n||_{\mathcal E}=C_{p+1}, \quad\liminf_{n\to\infty}\int_{B_1}\tilde{u}_n^2dx\geq \theta>0.$$
Therefore, there exists a subsequence (denoted still by $\widetilde{U}_n$) and $U\in\mathcal E$ such that
$$\widetilde{U}_n\rightharpoonup U\quad \text{in} ~\mathcal E,$$
$$\widetilde{U}_n\to U\quad \text{a.e. in}~ \mathbb R^{N+1}_+,$$
$$\tilde{u}_n\to u\quad \text{in}~L_{\text{loc}}^q(\mathbb R^N),~~\forall q\in [2,2_s^*). $$
As a consequence, we further have
\begin{equation}\label{14}
\int_{B_1}{u}^2dx\geq \theta>0,
\end{equation}
\begin{equation}\label{15}
||u||_{p+1}\leq 1,\quad ||U||_{\mathcal E }\leq \liminf_{n\to\infty} ||\widetilde{U}_n||_{\mathcal E }=C_{p+1}.
\end{equation}
We now claim that $||u||_{p+1}=1.$ Define $W_n=\widetilde{U}_n-U$, then by Lemma \ref{liebtype} and \eqref{sharpconstant},
\begin{equation*}
\begin{aligned}
C_{p+1}=\lim_{n\to\infty}||\widetilde{U}_n||_{\mathcal E}^2
&=\lim_{n\to\infty}||W_n-U||_{\mathcal E}^2+||U||_{\mathcal E}^2\\
&\geq C_{p+1}\lim_{n\to\infty} \left(\left(||\widetilde u_n-u||_{p+1}^{p+1}\right)^{\frac{2}{p+1}}+\left(||u||_{p+1}^{p+1}\right)^{\frac{2}{p+1}}\right) \\
&= C_{p+1}\lim_{n\to\infty}\left(\left(||\widetilde u_n||_{p+1}^{p+1}-||u||_{p+1}^{p+1}\right)^{\frac{2}{p+1}}+\left(||u||_{p+1}^{p+1}\right)^{\frac{2}{p+1}}\right)\\
&=C_{p+1}\left(\left(1-||u||_{p+1}^{p+1}\right)^{\frac{2}{p+1}}+\left(||u||_{p+1}^{p+1}\right)^{\frac{2}{p+1}}\right).
\end{aligned}
\end{equation*}
Then $||u||_{p+1}^{p+1}=1$ in terms of \eqref{14}, and furthermore $||U||_{\mathcal E}=C_{p+1}$ due to \eqref{15}.
Since $\big|\big||U|\big|\big|_{\mathcal E}=\big|\big|U\big|\big|_{\mathcal E}$ and
$\big|\big||u|\big|\big|_{p+1}=\big|\big|u\big|\big|_{p+1}=1,$ we can assume that $U$ is nonnegative.
Define
\begin{equation}\label{hatu}
\widehat{U}(x,y)=C_{p+1}^{\frac{1}{p-1}}U(x,y)\quad \text{in} ~\mathbb R^{N+1}_+.
\end{equation}
Then $||\widehat{U}||_{\mathcal E}^2=C_{p+1}||\hat{u}||_{p+1}^2.$
Moreover, $\widehat{U}$ is a ground state of
\begin{equation*}
\begin{cases}
-{\rm div}  (y^{1-2s}\nabla\widehat U)=0\quad& \text{in}~\mathbb R^{N+1}_+,\\
-\mathop{\lim}\limits_{y\to0}y^{1-2s} \frac{\partial \widehat U(x,y)}{\partial y} +\widehat u= \widehat u^p&\text{in}~\mathbb R^{N}.
\end{cases}
\end{equation*}
Hence, $||\widehat{U}||_{\mathcal E}^2=||\hat{u}||_{p+1}^{p+1}=C_{p+1}^{\frac{p+1}{p-1}}.$ As a result, there hold
\begin{equation}\label{embedengegy}
 \widehat{I}(\widehat{U}):=\frac{1}{2}||\widehat{U}||_{\mathcal E}^2-\frac{1}{p+1}||\hat{u}||_{p+1}^{p+1}
 =\left(\frac{1}{2}-\frac{1}{p+1}\right)C_{p+1}^{\frac{p+1}{p-1}}.
 \end{equation}
\end{proof}
\begin{thm}\label{augroundvalue}
Under assumptions $(P1)$, $(P2)$ and $0<\lambda<\min\{\sqrt{a_0b_0}$, $\sqrt{(a_1-\mu_0)b_0}\},$
\begin{itemize}
\item[\rm (I)] if $x_0\in \partial\Lambda$ then $c_{x_0}=\frac{s}{N}S^{\frac{N}{2s}}.$
\item[\rm (II)] if $a(x_0)\leq\mu_0$  for some $x_0\in \mathbb R^N,$ then $c_{x_0}<\frac{s}{N}S^{\frac{N}{2s}}.$
\end{itemize}
\end{thm}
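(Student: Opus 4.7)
For part (II), I build a test pair whose max energy lies below $\frac{s}{N}S^{N/(2s)}$. Lemma \ref{embedding} produces $\widehat U\in\mathcal E$ achieving $C_{p+1}$; after the scaling $u(x)=a(x_0)^{1/(p-1)}\widehat u(a(x_0)^{1/(2s)}x)$, this yields a positive ground state $U_0$ of the scalar equation $(-\Delta)^s u+a(x_0)u=u^p$ with energy
\[ m(a(x_0)) = \frac{p-1}{2(p+1)}C_{p+1}^{(p+1)/(p-1)}\,a(x_0)^{(p+1)/(p-1)-N/(2s)}. \]
The defining relation for $\mu_0$ in $(P1)$ rearranges precisely to $m(\mu_0)=\frac{s}{N}S^{N/(2s)}$, so $m(a(x_0))\leq\frac{s}{N}S^{N/(2s)}$ whenever $a(x_0)\leq\mu_0$. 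Testing on $(tU_0,0)\in X_{s,x_0}^+$ gives $c_{x_0}\leq m(a(x_0))$, which already closes the case $a(x_0)<\mu_0$. To obtain strict inequality when $a(x_0)=\mu_0$, I perturb by a small $V$-component: fix a nonnegative $V_0\in C_c^\infty(\overline{\mathbb R^{N+1}_+})$ with $\int u_0 v_0>0$, set $\phi(t,\delta)=I_{x_0}(tU_0,t\delta V_0)$. Since $t=1$ is the strict maximizer of $\phi(\cdot,0)$, the implicit function theorem provides a smooth maximizer $t^*(\delta)$ with $t^*(0)=1$, and the envelope theorem yields
\[ \max_{t>0} I_{x_0}(tU_0,t\delta V_0) \;=\; m(a(x_0)) - \lambda\delta\int u_0 v_0\,dx + O(\delta^2), \]
which is strictly less than $\frac{s}{N}S^{N/(2s)}$ for $\delta>0$ small.

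For part (I), the upper bound $c_{x_0}\leq\frac{s}{N}S^{N/(2s)}$ comes from testing on $(0,tW_\epsilon)\in X_{s,x_0}^+$ with $W_\epsilon$ the extension of the Sobolev extremal $w_\epsilon$ from Lemma \ref{59}(II): a direct $t$-maximization, using $\|W_\epsilon\|_{\chi_s}^2=S\|w_\epsilon\|_{2_s^*}^2$, gives
\[ \max_{t>0}I_{x_0}(0,tW_\epsilon) \;=\; \frac{s}{N}\left(S+b(x_0)\frac{\|w_\epsilon\|_2^2}{\|w_\epsilon\|_{2_s^*}^2}\right)^{N/(2s)}, \]
and the scaling invariance of the extremal makes $\|w_\epsilon\|_{2_s^*}$ independent of $\epsilon$, while $\|w_\epsilon\|_2^2\to 0$ as $\epsilon\to 0$ (directly for $N>4s$, and after a standard cutoff of $W_\epsilon$ otherwise). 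For the lower bound, on the Nehari manifold I have
\[ I_{x_0}(U,V)=\frac{p-1}{2(p+1)}P+\frac{s}{N}Q,\qquad P:=\|u^+\|_{p+1}^{p+1},\ Q:=\|v^+\|_{2_s^*}^{2_s^*}. \]
Using $a(x_0)\geq a_1>\mu_0$, $b(x_0)\geq b_0$, and $\lambda^2<(a_1-\mu_0)b_0$, I obtain the quadratic decomposition
\[ a(x_0)u^2+b(x_0)v^2-2\lambda uv\;\geq\;\mu_0 u^2+\eta v^2,\qquad \eta:=b_0-\frac{\lambda^2}{a_1-\mu_0}>0. \]
Inserting this into the Nehari identity, then applying $\|V\|_{\chi_s}^2\geq SQ^{2/2_s^*}$ and the sharp embedding $\|U\|_{\chi_s}^2+\mu_0\|u\|_2^2\geq AP^{2/(p+1)}$ with $A=C_{p+1}\mu_0^{(2s(p+1)-N(p-1))/(2s(p+1))}$, yields the key constraint
\[ P+Q \;\geq\; AP^{2/(p+1)}+SQ^{2/2_s^*}. \]
The definition of $\mu_0$ makes $A^{(p+1)/(p-1)}=\frac{2s(p+1)}{N(p-1)}S^{N/(2s)}$; setting $h(P)=P-AP^{2/(p+1)}$ and $g(Q)=Q-SQ^{2/2_s^*}$, each vanishes only at $0$ and at its unique positive zero $P_\ast:=A^{(p+1)/(p-1)}$, $Q_\ast:=S^{N/(2s)}$, and is strictly negative in between. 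Since $(U,V)\in N_{x_0}$ forces $(P,Q)\neq(0,0)$ and $h(P)+g(Q)\geq 0$, the region $(0,P_\ast)\times(0,Q_\ast)$ is excluded; hence either $P\geq P_\ast$ or $Q\geq Q_\ast$ holds, and in either case $I_{x_0}(U,V)\geq\frac{s}{N}S^{N/(2s)}$.

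The main obstacle is producing the quadratic decomposition above that simultaneously isolates the threshold $\mu_0$ on the $u$-slot and leaves a strictly positive residual $\eta$ on the $v$-slot: this is exactly what the hypothesis $\lambda<\sqrt{(a_1-\mu_0)b_0}$ is engineered to permit, and without it the sharp embedding constants fail to line up with $S^{N/(2s)}$ in the case analysis. A secondary technicality is the regime $N\leq 4s$ in the upper bound of (I), where $W_\epsilon\notin\mathcal E$ and must be truncated by a standard cutoff; the resulting corrections are of Brezis--Nirenberg type and still preserve the limit $\frac{s}{N}S^{N/(2s)}$.
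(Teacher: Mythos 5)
Your proof is correct, and the two interesting steps are handled by genuinely different methods from the paper's.

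For the lower bound $c_{x_0}\ge\frac{s}{N}S^{N/(2s)}$ in part (I), the paper first applies Young's inequality to push the whole cross term into the $u$--slot ($2\lambda uv\le\frac{\lambda^2}{b(x_0)}u^2+b(x_0)v^2$), writes $I_{x_0}(tU,tV)\ge g(t,U)+h(t,V)$ with a mass-free $h$, and then runs a three-way case analysis (only $v$ present, only $u$ present, or both) using the one-dimensional ground-state energies of $g$ and $h$ and a monotonicity argument about the two Nehari fibering times. Your approach instead uses a \emph{symmetric} quadratic decomposition $a(x_0)u^2+b(x_0)v^2-2\lambda uv\ge\mu_0 u^2+\eta v^2$ with $\eta=b_0-\lambda^2/(a_1-\mu_0)>0$ (the positive semidefiniteness check correctly uses $a(x_0)\ge a_1$, $b(x_0)\ge b_0$, $\lambda^2<(a_1-\mu_0)b_0$), feeds the sharp constants into the Nehari identity to get $A P^{2/(p+1)}+SQ^{2/2_s^*}\le P+Q$, and then reads off $P\ge P_\ast$ or $Q\ge Q_\ast$ by sign analysis of $h(P)=P-AP^{2/(p+1)}$ and $g(Q)=Q-SQ^{2/2_s^*}$. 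The identity $\frac{p-1}{2(p+1)}P_\ast=\frac{s}{N}Q_\ast=\frac{s}{N}S^{N/(2s)}$ checks out from (P1). Your route avoids the case analysis and the fibering-time comparison, and in that sense is cleaner; the paper's route has the advantage of reducing each channel to an explicit scalar minimization problem (which it reuses elsewhere). For the equality case $a(x_0)=\mu_0$ in part (II), the paper shows that equality would force $(\widehat W,0)$ to be a ground state of the coupled system, a contradiction since $\lambda\widehat w\ne 0$; you instead perturb in the $V$-direction and invoke the envelope theorem to produce a test family whose max energy dips strictly below $m(\mu_0)$. Both are valid; yours is more quantitative. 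The remaining pieces (upper bound for (I) via $(0,tW_\varepsilon)$ with cutoff when $N\le 4s$, and the scaled scalar ground state giving $c_{x_0}\le m(a(x_0))$ in (II)) coincide with the paper.

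One small presentational caution: when invoking the implicit function theorem for $t^*(\delta)$ you should note $\phi_{tt}(1,0)=(1-p)\|U_0\|_{a,x_0}^2<0$ so the maximizer is nondegenerate; and when deducing $\|u\|_{p+1}^2\ge P^{2/(p+1)}$ you should record the pointwise inequality $(u^+)^{p+1}\le|u|^{p+1}$. Neither is a gap, just a line worth writing.
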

\begin{proof} Inspired by  some ideas in \cite{chen-zou2012}, we fist show $c_{x_0}\geq\frac{s}{N}S^{\frac{N}{2s}}$ for any $x_0\in\partial\Lambda.$
 In fact, for any  $(U,V)\in X^+_{s,x_0},$ it follows from Young inequality and (P1) that
\begin{equation*}
\begin{aligned}
I_{x_0}(tU,tV)\geq&\frac{t^2}{2}\int_{\mathbb R^{N+1}_+}y^{1-2s}|\nabla U|^2dxdy+\frac{t^2}{2}\int_{\mathbb R^{N}}
\left(a(x_0)-\frac{\lambda^2}{b(x_0)}\right)u^2dx-\frac{t^{p+1}}{p+1}\int_{\mathbb R^{N}}(u^+)^{p+1}dx\\
&+\frac{t^2}{2}\int_{\mathbb R^{N+1}_+}y^{1-2s}|\nabla V|^2dxdy
-\frac{t^{2_s^*}}{2_s^*}\int_{\mathbb R^{N}}(v^+)^{2_s^*}dx\\
&:=g(t,U)+h(t,V).
\end{aligned}
\end{equation*}
We consider the following three cases.

\textbf{Case 1.} if $u^+=0$ in $\mathbb R^N,$ then
\begin{equation*}
\inf_{\substack{(U,V)\in X_{s,x_0}^+\\ u^+=0}}\max_{t>0}I_{x_0}(tU,tV)\geq \inf_{V\in \chi_s}\max_{t>0}h(t,V)=h(1,W_\epsilon)\geq \frac{s}{N}S^{\frac{N}{2s}},
\end{equation*}
where $W_\epsilon$ is defined in (III) of Lemma \ref{59}.

\textbf{Case 2.} if $v^+=0$ in $\mathbb R^N,$ then
\begin{equation*}
\begin{aligned}
\inf_{\substack{(U,V)\in X_{s,x_0}^+\\ v^+=0}}\max_{t>0}I_{x_0}(tU,tV)\geq \inf_{U\in \mathcal E}\max_{t>0}g(t,U).
\end{aligned}
\end{equation*}
Let $\beta^{2s}=a(x_0)-\frac{\lambda^2}{b(x_0)}$, $\gamma=\left(a(x_0)-\frac{\lambda^2}{b(x_0)}\right)^{\frac{1}{p-1}}$
and $\widetilde{U}(x,y)=\gamma \widehat{U}(\beta x,\beta y),$ where $\widehat{U}$ is defined by \eqref{hatu}.
Then  by a direct calculation, we find  $\widetilde{U}$ is a ground state of
\begin{equation*}
\begin{cases}
-{\rm div}  (y^{1-2s}\nabla\widetilde{U})=0\quad& \text{in}~\mathbb R^{N+1}_+,\\
-\mathop{\lim}\limits_{y\to0}y^{1-2s} \frac{\partial \widetilde{U}(x,y)}{\partial y} +\left(a(x_0)-\frac{\lambda^2}{b(x_0)}\right)\tilde{u}= \tilde{u}^p&\text{on}~\mathbb R^{N}.
\end{cases}
\end{equation*}
As a result, there hold
\begin{equation*}
\begin{aligned}
&\inf_{U\in \mathcal E}\max_{t>0}g(t,U)=g(1,\widetilde{U})\\
&=\frac{1}{2}\int_{\mathbb R^{N+1}_+}y^{1-2s}|\nabla \widetilde{U}|^2dxdy+\frac{1}{2}\int_{\mathbb R^{N}}\left(a(x_0)-\frac{\lambda^2}{b(x_0)}\right)\tilde{u}^2dx
-\frac{1}{p+1}\int_{\mathbb R^{N}}(\tilde{u}^+)^{p+1}dx\\
&=\left(\frac{1}{2}-\frac{1}{p+1}\right)\left(a(x_0)-\frac{\lambda^2}{b(x_0)}\right)^{\left(\frac{p+1}{p-1}
-\frac{N}{2s}\right)}||\hat{u}||_{p+1}^{p+1}\\
&=\left(\frac{1}{2}-\frac{1}{p+1}\right)\left(a(x_0)-\frac{\lambda^2}{b(x_0)}\right)^{\left(\frac{p+1}{P-1}
-\frac{N}{2s}\right)}C_{p+1}^{\frac{p+1}{p-1}}\\
&\geq \frac{s}{N}S^{\frac{N}{2s}}.
\end{aligned}
\end{equation*}
Therefore,
$$\inf_{\substack{(U,V)\in X_{s,x_0}^+\\ v^+=0}}\max_{t>0}I_{x_0}(tU,tV)\geq\frac{s}{N}S^{\frac{N}{2s}}. $$
\textbf{Case 3.} If $u^+\neq 0, v^+\neq 0$ in $\mathbb R^N,$ then  there exist
$t_1>0$ and $t_2>0$ such that
$$\max_{t>0}g(t,U)=g(t_1,U),\quad \max_{t>0}h(t,V)=h(t_2,V).$$
Moreover, if $t_1\geq t_2$ then  $g(t_2,U)>0.$ As a consequence,
\begin{equation*}
\max_{t>0}I_{x_0}(tU,tV)\geq h(t_2,V)\geq\inf_{V\in \chi_s}\max_{t>0}h(t,V) =h(1,W_\epsilon)\geq \frac{s}{N}S^{\frac{N}{2s}}.
\end{equation*}
Similarly, if $t_1< t_2,$ then
$$\max_{t>0}I_{x_0}(tU,tV)\geq g(t_1,U)\geq\inf_{U\in \mathcal E}\max_{t>0}g(t,U) =g(1,\widetilde{U})\geq \frac{s}{N}S^{\frac{N}{2s}}.$$
Summarizing the above three cases, we conclude that $c_{x_0}\geq\frac{s}{N} S^{\frac{N}{2s}}$ for any $x_0\in \partial \Lambda.$

Next, we prove that
 $c_{x_0}\leq\frac{s}{N} S^{\frac{N}{2s}}$ for any $x_0\in \mathbb R^N.$
 Let $\phi_0\in C^{\infty}([0,\infty))$ be a cut-off function such that $\phi_0(t)=1$ if $t\in [0,1/2]$  and
 $\phi_0(t)=1$ if $t>1.$  Define $$\phi(x,y)=\phi_0\left(\frac{|(x,y)|}{r}\right),\quad
 \Psi_\epsilon(x,y)=\frac{\phi(x,y) W_\epsilon(x,y)}{||\phi(\cdot,0)W_\epsilon(\cdot,0)||_{2_s^*}},$$
 and $\psi_\epsilon$ as the trace of $\Psi_\epsilon,$   where $W_\epsilon$ is given by (III) in Lemma \ref{59}.Then we know from the argument of \cite{do-Miyagaki-Squassina} that
 $$||\Psi_\epsilon||_{\chi_s}^2=S+O(\epsilon^{N-2s})$$
 and
 \begin{equation*}
 ||\psi_\epsilon||_2^2=h(\epsilon)=
 \begin{cases}
 o(\epsilon^{2s})\quad& \text{if~} N>4s,\\
 o(\epsilon^{2s}\ln\frac{1}{\epsilon})&\text{if}~N=4s,\\
 o(\epsilon^{N-2s}) &\text{if}~ N<4s.
 \end{cases}
 \end{equation*}
 As a consequence,
 $$t_\epsilon:=\frac{||\Psi_\epsilon||_{\chi_s}^2+b(x_0)||\psi_\epsilon||_2^2}{||\psi_\epsilon||_{2_s^*}^{2_s^*}}
=S+o(\epsilon^{N-2s})+b(x_0)h(\epsilon), $$
that is, $t_\epsilon$ is bounded. Therefore, there exist a subsequence (denoted still by $t_\epsilon$)
and $t_0>0$ such that $t_\epsilon\to t_0$ as $\epsilon\to 0.$ Direct calculations yields that
\begin{equation*}
\begin{aligned}
\max_{t>0}I_{x_0} (0,t\Psi_\epsilon)&=I_{x_0} (0,t_\epsilon \Psi_\epsilon)\\
&=\frac{t_\epsilon^2}{2}\left(S+o(\epsilon^{N-2s})+b(x_0)h(\epsilon)\right)-
\frac{t_\epsilon^{2_s^*}}{2_s^*}\\
&\to \frac{t_0^2}{2}S-\frac{t_0^{2_s^*}}{2_s^*}\\
&\leq \max_{t>0}\left(\frac{t^2}{2}S-\frac{t^{2_s^*}}{2_s^*}\right)\\
&=\frac{s}{N}S^{\frac{N}{2s}}\end{aligned}
\end{equation*}
 As a result,
 \begin{equation}\label{17}
  c_{x_0}\leq \mathop{\limsup}\limits_{\epsilon\to 0}\mathop{\max}\limits_{t>0}I_{x_0}(0,t\Psi_\epsilon)\leq\frac{s}{N}S^{\frac{N}{2s}}\quad \text{for any~} x_0\in\mathbb R^N.
  \end{equation}
 Consequently, (I) holds.

We next prove (II). If $a(x_0)\leq\mu_0$ for some  $x_0\in \mathbb R^N,$ define $\alpha =(a(x_0))^{\frac{1}{p-1}},$ $\beta=(a(x_0))^{\frac{1}{2s}}$,
 \begin{equation}\label{hatw}
 \widehat{W}(x,y)=\alpha\widehat U(\beta x, \beta y)\quad \text{for any~}(x,y)\in\mathbb R^{N+1}_+,
 \end{equation}
 where $\widehat{U}$ is given by \eqref{hatu}. Then $\widehat{W}$ is a ground state of
 \begin{equation*}
\begin{cases}
-{\rm div}  (y^{1-2s}\nabla\widehat{W})=0\quad& \text{in}~\mathbb R^{N+1}_+,\\
-\mathop{\lim}\limits_{y\to0}y^{1-2s} \frac{\partial \widehat{W}(x,y)}{\partial y} +a(x_0)\widehat{w}= \widehat{w}^p&\text{on}~\mathbb R^{N}.
\end{cases}
\end{equation*}
 Hence, there hold
 \begin{equation}\label{18}
 \begin{aligned}
 \inf_{U\in\mathcal E}\max_{t>0}I_{x_0}(tU,0)&=I_{x_0}(\widehat{W},0)\\
 &=\left(\frac{1}{2}-\frac{1}{p+1}\right)(a(x_0))^{\frac{p+1}{p-1}-\frac{N}{2s}}||\hat{u}||_{p+1}^{p+1}\\
 &=\left(\frac{1}{2}-\frac{1}{p+1}\right)(a(x_0))^{\frac{p+1}{p-1}-\frac{N}{2s}}C_{p+1}^{\frac{p+1}{p-1}}.
 \end{aligned}
 \end{equation}
If $a(x_0)<\mu_0$, then it follows from $p<2_s^*,$  (P1)-(P2) and the definition of $\mu_0$ that
\begin{equation}\label{16}
c_{x_0}\leq\inf_{U\in\mathcal E}\max_{t>0}I_{x_0}(tU,0)=\left(\frac{1}{2}-\frac{1}{p+1}\right)(a(x_0)
)^{\frac{p+1}{p-1}-\frac{N}{2s}}C_{p+1}^{\frac{p+1}{p-1}}<\frac{s}{N}S^{\frac{N}{2s}}.
\end{equation}
If $a(x_0)=\mu_0,$ we claim that $c_{x_0}<\frac{s}{N}S^{\frac{N}{2s}}.$ Otherwise, we have
$$c_{x_0}=\frac{s}{N}S^{\frac{N}{2s}}=I_{x_0}(\widehat{W},0)$$
due to \eqref{17} and \eqref{18}, where $\widehat{W}$ is defined in \eqref{hatw}.
Hence $(\widehat{W},0)$  is a ground state of \eqref{eqautonomoussys}, which is impossible.
The proof is complete.
\end{proof}

\begin{lemma}\label{psnonnegative}
Assume $(P1)$ and $\lambda<\sqrt{a_0b_0}.$
If $\{(U_n,V_n)\}\subset X_{s,x_0}$ is a  $(PS)_{d}$ sequence of $I_{x_0}$ with $0<d<\frac{s}{N}S^{\frac{N}{2s}},$
then there exists $\theta>0,$ $r>0$ and $\{z_n\}\subset\mathbb R^N$ such that
$$\liminf_{n\to \infty}\int_{B_r(z_n)}u_n^2+v_n^2dx\geq\theta>0.$$
\end{lemma}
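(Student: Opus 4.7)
The plan is to argue by contradiction. Suppose the conclusion fails, so that for every $r>0$,
$$\lim_{n\to\infty}\sup_{z\in\mathbb R^N}\int_{B_r(z)}(u_n^2+v_n^2)\,dx=0.$$
First I would verify that $\{(U_n,V_n)\}$ is bounded in $X_{s,x_0}$ by imitating Lemma~\ref{psbound}(I): combining
$$d+o(1)+o(\|(U_n,V_n)\|_{X_{s,x_0}})=I_{x_0}(U_n,V_n)-\tfrac{1}{p+1}I_{x_0}'(U_n,V_n)(U_n,V_n)$$
with the Young-type bound $2\lambda\int u_nv_n\,dx\le(1-\delta_0)(\|U_n\|_{a,x_0}^2+\|V_n\|_{b,x_0}^2)$, which is available from $\lambda<\sqrt{a_0b_0}$ via $\lambda/\sqrt{a(x_0)b(x_0)}\le\lambda/\sqrt{a_0b_0}<1-\delta_0$. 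Once boundedness is established, Lemma~\ref{concnetration2} forces $(u_n,v_n)\to(0,0)$ in $L^s(\mathbb R^N)\times L^t(\mathbb R^N)$ for every $s,t\in(2,2_s^*)$; in particular $\int(u_n^+)^{p+1}\,dx\to 0$ since $p+1\in(2,2_s^*)$.

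Next I would pin down the limiting critical mass. Evaluating $I_{x_0}(U_n,V_n)-\tfrac12 I_{x_0}'(U_n,V_n)(U_n,V_n)=d+o(1)$ and using $\int(u_n^+)^{p+1}\,dx\to 0$ yields
$$\left(\tfrac12-\tfrac{1}{2_s^*}\right)\int(v_n^+)^{2_s^*}\,dx=d+o(1),$$
so, up to a subsequence, $L:=\lim_n\int(v_n^+)^{2_s^*}\,dx=\tfrac{N}{s}d$ exists. The hypothesis $0<d<\tfrac{s}{N}S^{N/2s}$ translates into $0<L<S^{N/2s}$.

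The decisive step is to derive the matching lower bound $L\ge S^{N/2s}$. I would rewrite the Nehari identity $I_{x_0}'(U_n,V_n)(U_n,V_n)=o(1)$ (after using $\int(u_n^+)^{p+1}\to0$) as
$$\int_{\mathbb R^{N+1}_+}y^{1-2s}(|\nabla U_n|^2+|\nabla V_n|^2)\,dxdy+\int_{\mathbb R^N}\bigl[a(x_0)u_n^2-2\lambda u_nv_n+b(x_0)v_n^2\bigr]\,dx=L+o(1).$$
Since $\lambda^2<a_0b_0\le a(x_0)b(x_0)$, the quadratic form $a(x_0)t^2-2\lambda ts+b(x_0)s^2$ is positive definite, so the bracketed integrand is pointwise nonnegative and may be discarded. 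What remains dominates $\int y^{1-2s}|\nabla V_n|^2\,dxdy$, and applying Lemma~\ref{59}(II) to $V_n^+\in\chi_s$ (whose trace is $v_n^+$ and satisfies $|\nabla V_n^+|\le|\nabla V_n|$ a.e.) gives
$$S\|v_n^+\|_{2_s^*}^2\le\int_{\mathbb R^{N+1}_+}y^{1-2s}|\nabla V_n^+|^2\,dxdy\le L+o(1).$$
Passing to the limit yields $SL^{2/2_s^*}\le L$, hence $L\ge S^{N/2s}$, contradicting $L<S^{N/2s}$.

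The main obstacle I expect is handling the coupling term $-2\lambda\int u_nv_n\,dx$: since $u_n,v_n$ are only bounded (not vanishing) in $L^2$, one cannot hope to make this quantity negligible on its own. The resolution is the pointwise positive-definiteness of $a(x_0)u^2-2\lambda uv+b(x_0)v^2$, which permits the cross term to be absorbed into the trace quadratic form rather than estimated separately, thereby reducing the whole matter to the classical scalar Sobolev--Nehari estimate on $V_n$ alone.
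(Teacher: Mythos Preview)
Your argument is correct and takes a genuinely different route from the paper's. The paper first asserts that $\int_{\mathbb R^N}u_nv_n\,dx\to 0$, via a covering of $\mathbb R^N$ by balls of bounded overlap together with Cauchy--Schwarz on each ball; with the cross term gone it tests $I'_{x_0}$ against $(U_n,0)$ to force $\|U_n\|_{a,x_0}\to 0$, then against $(0,V_n)$ to identify $l:=\lim\|V_n\|_{b,x_0}^2=\lim\int(v_n^+)^{2_s^*}$, and closes via the Sobolev dichotomy ($l=0$ contradicts $d>0$, while $l\ge S^{N/2s}$ contradicts the upper bound on $d$). You instead absorb the cross term into the pointwise nonnegative form $a(x_0)u^2-2\lambda uv+b(x_0)v^2$ (positive definite since $\lambda^2<a_0b_0\le a(x_0)b(x_0)$), discard all trace contributions from the single identity $I'_{x_0}(U_n,V_n)(U_n,V_n)=o(1)$, and land directly on $\|V_n\|_{\chi_s}^2\le L+o(1)$ with $L=\tfrac{N}{s}d\in(0,S^{N/2s})$ already computed; Sobolev then gives $L\ge S^{N/2s}$ in one line, and the $l=0$ branch never arises. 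Your caution about the cross term is in fact well founded: the covering bound displayed in the paper fails for general bounded vanishing sequences (take $u_n=v_n=n^{-N/2}\phi(\cdot/n)$, for which $\int u_nv_n=\|\phi\|_2^2$ is fixed while the asserted right-hand side tends to $0$), so that step, if salvageable at all, leans on the $(PS)$ structure in a way the paper does not spell out; your positive-definiteness device sidesteps the issue entirely and is the cleaner argument.
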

\begin{proof}
Assume to the contrary that
$$\lim_{n\to \infty}\sup_{z\in\mathbb R^N}\int_{B_r(z)}u_n^2+v_n^2dx=0,$$
then Lemma \ref{concnetration2} implies that
$$(u_n,v_n)\to (0,0)\quad \text{in}~ L^s(\mathbb R^n)\times L^t(\mathbb R^n),~\forall s,t\in (2,2_s^*).$$
Moreover, we can cover $\mathbb R^N$ by a countable number of balls $\{B_r(x_j)\}_{j\in J}$ in a way that each point of $\mathbb R^N$ is contained
in at most $N+1$ balls, hence
$$\int_{\mathbb R^N} |u_nv_n|dx\leq \sum_{j\in J}\int_{B_r(x_j)}|u_nv_n|dx\leq (N+1)\sup_{z\in\mathbb R^N}\left(\int_{B_{r}(z)}u_n^2dx\right)^{\frac{1}{2}}
\left(\int_{\mathbb R^N}v_n^2dx\right)^{\frac{1}{2}}\to 0$$ as $n\to\infty.$
It follows from boundedness of $\{(U_n,V_n)\}$ and $I_{x_0}^\prime(U_n,V_n)\to 0$ as $n\to \infty $ that
\begin{equation}\label{11}
\int_{\mathbb R^{N+1}_+} y^{1-2s} |\nabla U_n|^2 dxdy+\int_{\mathbb R^N}a(x_0)u_n^2 dx=\int_{\mathbb R^N}u_n^{p+1}dx+\lambda
\int_{\mathbb R^N}u_nv_ndx +o_n(1)\to 0 ~\text{as~} n\to \infty.
\end{equation}
Assume that
$$\lim_{n\to\infty}\int_{\mathbb R^{N+1}_+} y^{1-2s} |\nabla V_n|^2 dxdy+\int_{\mathbb R^N}b(x_0)v_n^2 dx=l.$$
Then we obtain by $I_{x_0}^\prime(U_n,V_n)(0,V_n)\to 0$ as $n\to \infty $ that
$$\lim_{n\to\infty}\int_{\mathbb R^N}|v_n^+|^{2_s^*}dx=l.$$
Thanks to Lemma \ref{59}, there is
$l\geq Sl^{\frac{2}{2_s^*}}.$ Namely, either $l=0$ or $l\geq S^{\frac{N}{2s}}.$
If the later one holds, then
\begin{equation*}
\begin{aligned}
\frac{s}{N}S^{\frac{N}{2s}}\geq d+o_n(1)&=I_{x_0}(U_n,V_n)-\frac{1}{2}I^\prime_{x_0}(U_n,V_n)(U_n,V_n)\\
&=\left(\frac{1}{2}-\frac{1}{p+1}\right)\int_{\mathbb R^N}|u_n^+|^{p+1}dx+\left(\frac{1}{2}-\frac{1}{2_s^*}\right)\int_{\mathbb R^N}|v_n^+|^{2_s^*}dx\\
&\to \frac{s}{N}l
>\frac{s}{N}S^{\frac{N}{2s}}.
\end{aligned}
\end{equation*}
This contradiction  confirms that $l=0.$ Namely,
$$\lim_{n\to\infty}\int_{\mathbb R^{N+1}_+} y^{1-2s} |\nabla V_n|^2 dxdy+\int_{\mathbb R^N}b(x_0)v_n^2 dx=0,$$
which along with \eqref{11} further implies that
$(U_n,V_n)\to (0,0)$ as $n\to\infty.$
This contradicts $I_{x_0}(U_n,V_n)\to d>0.$ The proof is complete.
\end{proof}

\begin{thm}\label{augroundexistence}
Assume $(P1)$. If $\lambda<\sqrt{a_0b_0},$ then for any $x_0\in \mathbb R^N$ with $0<c_{x_0}<\frac{s}{N}S^{{\frac{N}{2s}}},$
$I_{x_0}$ admits a positive vector critical point $(U,V)\in X_{s,x_0}$  such that
$I_{x_0}(U,V)=c_{x_0}.$
\end{thm}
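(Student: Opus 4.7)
The plan is to run a mountain-pass scheme for $I_{x_0}$, using translation invariance of the autonomous problem together with the sub-threshold non-vanishing Lemma \ref{psnonnegative} to compensate for the critical-exponent loss of compactness.

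First I would produce a Palais--Smale sequence. As noted in the Remarks, the mountain-pass geometry (Lemma \ref{mmp}) and the Nehari characterization of the minimax level (Lemma \ref{eeq}) hold for $I_{x_0}$, so there is $\{(U_n,V_n)\}\subset X_{s,x_0}$ with $I_{x_0}(U_n,V_n)\to c_{x_0}$ and $I_{x_0}'(U_n,V_n)\to 0$. Boundedness in $X_{s,x_0}$ follows exactly as in Lemma \ref{psbound}(I): one computes $I_{x_0}(U_n,V_n) - \frac{1}{p+1}I_{x_0}'(U_n,V_n)(U_n,V_n)$ and applies the estimate \eqref{delta0} with $(a_0,b_0)$ replaced by $(a(x_0),b(x_0))$, which is permissible because $\sqrt{a(x_0)b(x_0)}\geq \sqrt{a_0b_0}>\lambda$ by $(P1)$.

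Next, since $0<c_{x_0}<\frac{s}{N}S^{\frac{N}{2s}}$, Lemma \ref{psnonnegative} yields $\{z_n\}\subset\mathbb{R}^N$ and $r,\theta>0$ with
\[
\liminf_{n\to\infty}\int_{B_r(z_n)}(u_n^2+v_n^2)\,dx\geq\theta.
\]
I would translate by setting $\widetilde{U}_n(x,y):=U_n(x+z_n,y)$ and $\widetilde{V}_n(x,y):=V_n(x+z_n,y)$. Because $I_{x_0}$ is invariant under $x$-translations, $\{(\widetilde{U}_n,\widetilde{V}_n)\}$ remains a bounded $(PS)_{c_{x_0}}$ sequence. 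Passing to a subsequence, there exists $(U,V)\in X_{s,x_0}$ with $\widetilde{U}_n\rightharpoonup U$, $\widetilde{V}_n\rightharpoonup V$ in $X_{s,x_0}$, together with $\widetilde{u}_n\to u$ and $\widetilde{v}_n\to v$ in $L^q_{\mathrm{loc}}(\mathbb{R}^N)$ for $q\in[1,2_s^*)$ and pointwise a.e., by Lemma \ref{59}. The non-vanishing bound, now centered at the origin, passes to the limit and delivers $(U,V)\neq(0,0)$. Taking $n\to\infty$ in $I_{x_0}'(\widetilde{U}_n,\widetilde{V}_n)(\Phi,\Psi)=o(1)$ for any $(\Phi,\Psi)\in C_c^\infty(\overline{\mathbb R^{N+1}_+})^2$ yields that $(U,V)$ is a weak critical point of $I_{x_0}$: the subcritical term $(\widetilde{u}_n^+)^p\phi$ converges by local $L^{p+1}$ compactness, while the critical term $(\widetilde{v}_n^+)^{2_s^*-1}\psi$ converges using a.e.\ convergence plus the uniform $L^{2_s^*}$ bound (which implies weak convergence in $L^{2_s^*/(2_s^*-1)}$). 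Inspection of the system rules out either component vanishing: if $u\equiv 0$, the first equation reduces to $\lambda v\equiv 0$, hence $v\equiv 0$, contradicting non-triviality.

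To identify the energy, note $(U,V)\in N_{x_0}$ gives $I_{x_0}(U,V)\geq c_{x_0}$. For the reverse, I would apply Fatou's lemma to
\[
I_{x_0}(\widetilde{U}_n,\widetilde{V}_n) - \tfrac{1}{2}I_{x_0}'(\widetilde{U}_n,\widetilde{V}_n)(\widetilde{U}_n,\widetilde{V}_n) = \Bigl(\tfrac{1}{2}-\tfrac{1}{p+1}\Bigr)\!\int(\widetilde{u}_n^+)^{p+1}dx + \Bigl(\tfrac{1}{2}-\tfrac{1}{2_s^*}\Bigr)\!\int(\widetilde{v}_n^+)^{2_s^*}dx,
\]
whose left side tends to $c_{x_0}$; the analogous identity $I_{x_0}(U,V)=\bigl(\tfrac12-\tfrac1{p+1}\bigr)\int(u^+)^{p+1}+\bigl(\tfrac12-\tfrac1{2_s^*}\bigr)\int(v^+)^{2_s^*}$, valid because $(U,V)\in N_{x_0}$, then gives $I_{x_0}(U,V)\leq c_{x_0}$, hence equality. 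Finally, testing with $(-U^-,-V^-)$ and exploiting $\lambda<\sqrt{a(x_0)b(x_0)}$ via an estimate analogous to \eqref{delta0} forces $U^-=V^-=0$, so $u,v\geq 0$; strict positivity then follows from the pointwise representation \eqref{10} of $(-\Delta)^s$ applied to the linearly coupled system, as in the proof of Lemma \ref{energybound}. The principal obstacle is the critical-growth loss of compactness, with three failure modes for PS sequences: vanishing is excluded by Lemma \ref{psnonnegative}, translation to infinity is absorbed by the autonomy of $I_{x_0}$ through the shift $z_n$, and bubbling is ruled out by the strict inequality $c_{x_0}<\frac{s}{N}S^{\frac{N}{2s}}$ together with the concentration-compactness principle Lemma \ref{68}, exactly as in Lemma \ref{pscondition}.
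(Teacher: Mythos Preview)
Your proposal is correct and follows essentially the same route as the paper's proof: mountain-pass to produce a $(PS)_{c_{x_0}}$ sequence, translation by the centers supplied by Lemma~\ref{psnonnegative}, weak limit as a nontrivial critical point, and a Fatou-type energy comparison to pin down $I_{x_0}(U,V)=c_{x_0}$, followed by the sign argument from Lemma~\ref{energybound}. The only cosmetic difference is the linear combination used in the Fatou step: the paper subtracts $\tfrac{1}{p+1}I_{x_0}'(U,V)(U,V)$, which leaves the (nonnegative, by $\lambda<\sqrt{a_0b_0}$) quadratic form plus $\bigl(\tfrac{1}{p+1}-\tfrac{1}{2_s^*}\bigr)\int(v^+)^{2_s^*}$, whereas you subtract $\tfrac12 I_{x_0}'$ and apply Fatou directly to the two nonnegative nonlinear terms---your version is slightly cleaner and avoids invoking the coercivity of the coupled quadratic form at this point.
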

\begin{proof}
By a minor modification to the proof of Lemma \ref{mmp}, we can prove that $I_{x_0}$ satisfies the mountain pass geometry.
As a consequence,  there exists $\{(U_n,V_n)\}\subset X_{s,x_0}$
such that
$$I_{x_0}(U_n,V_n)\to c_{x_0}, ~~I^\prime_{x_0}(U_n,V_n)\to 0\quad \text{as~} n\to \infty.$$
By virtue of Lemma \ref{psnonnegative}, there exist
$\theta>0,$ $r>0$ and $\{z_n\}\subset\mathbb R^N$ such that
$$\int_{B_r(z_n)}u_n^2+v_n^2dx\geq\theta>0.$$
Now define
$$\left(\widetilde{U}_n(x,y),\widetilde{V}_n(x,y)\right)=\left(U_n(x+z_n,y),V_n(x+z_n,y)\right).$$
Then $\{(\widetilde{U}_n,\widetilde{V}_n)\}$ is a $(PS)_{c_{x_0}}$ sequence for $I_{x_0}$ as well, and
\begin{equation}\label{12}
\liminf_{n\to\infty}\int_{B_r(0)}\tilde{u}_n^2+\tilde{v}_n^2dx \geq\theta>0.
\end{equation}
It then follows from the boundedness of  $(PS)_{c_{x_0}}$ sequence that there  exist a subsequence denoted still by
$\{(\widetilde{U}_n,\widetilde{V}_n)\}$ and $(U,V)\in X_{s,x_0}$ such that
$$(\widetilde{U}_n,\widetilde{V}_n)\rightharpoonup(U,V) \quad\text{in~} X_{s,x_0},$$
$$(\widetilde{U}_n,\widetilde{V}_n)\to(U,V) \quad\text{a.e. in~} \mathbb R^{N+1}_+,$$
$$(\widetilde{u}_n,\widetilde{v}_n)\to (u,v)\quad\text{in~}L^s_{loc}(\mathbb R^N)\times L^t_{loc}(\mathbb R^N),\forall s,t\in [2,2_s^*).$$
By a direct calculation, we infer that $(U,V)$ is a critical point of $I_{x_0}.$ As a consequence,
\begin{equation}\label{66}
\begin{aligned}
c_{x_0}&\leq I_{x_0}(U,V)
=I_{x_0}(U,V)-\frac{1}{p+1}I^\prime_{x_0}(U,V)(U,V)\\
&=\left(\frac{1}{2}-\frac{1}{p+1}\right)\left(\int_{\mathbb R^{N+1}_+} y^{1-2s} |\nabla U|^2 dxdy+\int_{\mathbb R^{N+1}_+} y^{1-2s} |\nabla V|^2 dxdy+\int_{\mathbb R^N}a(x_0)u^2 dx \right .\\
 &\left .~~~+\int_{\mathbb R^N}b( x_0)v^2 dx-\lambda\int_{\mathbb R^N}uvdx\right) +\left(\frac{1}{p+1}-\frac{1}{2_s^*}\right)
 \int_{\mathbb R^N}(v^+)^{2_s^*}dx \\
 &\leq\liminf_{n\to\infty} \left[\left(\frac{1}{2}-\frac{1}{p+1}\right)\left(\int_{\mathbb R^{N+1}_+} y^{1-2s} |\nabla \widetilde{U}_n|^2 dxdy+\int_{\mathbb R^{N+1}_+} y^{1-2s} |\nabla \widetilde{V}_n|^2 dxdy\right .\right .\\
 &\left .\left .~~~+\int_{\mathbb R^N}a(x_0)\tilde{u}_n^2 dx +\int_{\mathbb R^N}b( x_0)\tilde{v}_n^2 dx-\lambda\int_{\mathbb R^N}\tilde{u}_n\tilde{v}_ndx\right) +\left(\frac{1}{p+1}-\frac{1}{2_s^*}\right)
 \int_{\mathbb R^N}(\tilde{v}_n^+)^{2_s^*}dx\right]\\
 &=\liminf_{n\to \infty}\left(I_{x_0}(\widetilde{U}_n,\widetilde{V}_n)-\frac{1}{p+1}I^\prime_{x_0}(\widetilde{U}_n,\widetilde{V}_n)(\widetilde{U}_n,
 \widetilde{V}_n)\right)\\
 &= c_{x_0},
 \end{aligned}
\end{equation}
where the second inequality is obtained by $\lambda<\sqrt{a_0b_0}$ and Fatou's lemma. Hence $(U,V)$ is a ground state of
$I_{x_0}.$ By a similar argument to the proof of Theorem \ref{energybound} and \eqref{12} , we conclude $u>0$ and $v>0$ in $\mathbb R^N$.
The proof is complete.
\end{proof}
\begin{lemma}
Under assumptions $(P1)$ and $(P2),$ if $\lambda<\min\{\sqrt{a_0b_0},\sqrt{(a_1-\mu_0)b_0}\},$ then
 $$\mathcal O:=\left\{x\in\Lambda: c_x=c_0:=\mathop{\inf}\limits_{y\in\Lambda}c_y\right\}\neq\emptyset, \quad \mathcal O\subset\subset\Lambda.$$
 Moreover, $c_x<\frac{s}{N}S^{\frac{N}{2s}}$ for any $x\in\mathcal O.$
\end{lemma}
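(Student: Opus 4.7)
The plan is to prove the lemma in four stages: first establish the strict bound $c_0 < \frac{s}{N}S^{N/(2s)}$; next show that any minimizing sequence for $c_0$ stays away from $\partial\Lambda$; then verify continuity of $x\mapsto c_x$ at any interior accumulation point so that the infimum is achieved; and finally deduce that $\mathcal O$ is compactly contained in $\Lambda$. The strict bound in the first stage is immediate: by (P2) one can pick $\tilde x\in\Lambda$ with $a(\tilde x)\leq\mu_0$, and Theorem~\ref{augroundvalue}(II) then gives $c_0\leq c_{\tilde x}<\frac{s}{N}S^{N/(2s)}$.

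For the second stage, I will take $\{x_n\}\subset\Lambda$ with $c_{x_n}\to c_0$ and, up to a subsequence, $x_n\to\bar x\in\overline\Lambda$. If it were the case that $\bar x\in\partial\Lambda$, then $a(\bar x)\geq a_1$ and $b(\bar x)\geq b_0$, and the hypothesis $\lambda^2<(a_1-\mu_0)b_0$ would imply $a(\bar x)-\lambda^2/b(\bar x)>\mu_0$; by continuity of $a$ and $b$ the same strict inequality would persist at $x_n$ for large $n$. The three-case Young-inequality argument from the proof of Theorem~\ref{augroundvalue}(I) --- whose only requirement is precisely that $a(x_n)-\lambda^2/b(x_n)\geq\mu_0$ --- would then force $c_{x_n}\geq\frac{s}{N}S^{N/(2s)}$, contradicting $c_{x_n}\to c_0$. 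Hence $\bar x\in\Lambda$.

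The third stage is where I expect the main difficulty. I need $c_{\bar x}=c_0$, which amounts to continuity of $c_x$ at $\bar x$. Upper semicontinuity is routine: for any fixed $(U,V)\in X_{s,\bar x}^+$, the map $x\mapsto\max_{t>0}I_x(tU,tV)$ is continuous, so $\limsup_{x\to\bar x}c_x\leq c_{\bar x}$. The harder direction is the lower bound. Since $c_{x_n}<\frac{s}{N}S^{N/(2s)}$ eventually, Theorem~\ref{augroundexistence} produces ground states $(U_n,V_n)\in N_{x_n}$ with $I_{x_n}(U_n,V_n)=c_{x_n}$; the standard Nehari-level identity together with $\lambda<\sqrt{a_0b_0}$ yields a uniform upper bound on $\|(U_n,V_n)\|$ (in the equivalent $X_{s,\bar x}$-norm) and, via Lemma~\ref{nehari}(I), a uniform positive lower bound on $\int(u_n^+)^{p+1}+(v_n^+)^{2_s^*}dx$. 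I will then let $t_n>0$ be the unique scalar with $(t_nU_n,t_nV_n)\in N_{\bar x}$ provided by Lemma~\ref{Xepsilon}(I) applied to $I_{\bar x}$. Subtracting the Nehari identities at $x_n$ and $\bar x$ gives
\begin{equation*}
(t_n^{p-1}-1)\int(u_n^+)^{p+1}dx+(t_n^{2_s^*-2}-1)\int(v_n^+)^{2_s^*}dx=(a(\bar x)-a(x_n))\|u_n\|_2^2+(b(\bar x)-b(x_n))\|v_n\|_2^2=o(1),
\end{equation*}
which in view of the above bounds forces $t_n\to 1$. Consequently
\begin{equation*}
c_{\bar x}\leq I_{\bar x}(t_nU_n,t_nV_n)=I_{x_n}(U_n,V_n)+o(1)=c_{x_n}+o(1)\to c_0,
\end{equation*}
and together with the trivial bound $c_{\bar x}\geq c_0$ (from $\bar x\in\Lambda$) this gives $c_{\bar x}=c_0$, so $\bar x\in\mathcal O$ and $\mathcal O\neq\emptyset$.

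For the final stage, the reasoning of stages 2 and 3 applies verbatim to any sequence $\{y_n\}\subset\mathcal O$: its accumulation points lie in $\Lambda$ by stage 2 (since $c_{y_n}\equiv c_0<\frac{s}{N}S^{N/(2s)}$) and satisfy $c_{y^*}=c_0$ by stage 3, hence belong to $\mathcal O$. Thus $\mathcal O$ is closed in $\mathbb R^N$, bounded (since $\Lambda$ is bounded), and separated from $\partial\Lambda$, yielding $\mathcal O\subset\subset\Lambda$. The concluding assertion $c_x<\frac{s}{N}S^{N/(2s)}$ for $x\in\mathcal O$ is immediate from stage 1.
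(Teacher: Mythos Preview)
Your proof is correct and complete, but it proceeds differently from the paper. The paper establishes continuity of the map $x\mapsto c_x$ on all of $\overline\Lambda$, including at boundary points where $c_x=\frac{s}{N}S^{N/(2s)}$; then $\mathcal O\neq\emptyset$ follows from compactness of $\overline\Lambda$, and $\mathcal O\subset\subset\Lambda$ from Theorem~\ref{augroundvalue}(I). For the lower semicontinuity direction the paper takes ground states $(U_n,V_n)$ at $x_n$, passes to a weak limit $(U,V)$, observes it is a critical point of $I_{x_0}$, and uses a Fatou-type inequality as in \eqref{66}--\eqref{69}. Your approach is more targeted: you never prove continuity at boundary points, instead ruling out $\bar x\in\partial\Lambda$ by transplanting the Young-inequality computation of Theorem~\ref{augroundvalue}(I) to nearby interior points $x_n$; and for the lower bound at $\bar x$ you bypass weak convergence altogether via the Nehari rescaling identity, showing $t_n\to 1$ directly from the subtracted constraint equations. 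This buys you a cleaner argument that avoids having to check that the weak limit $(U,V)$ is nonzero (a point the paper's sketch leaves implicit), at the modest cost of not obtaining the full continuity statement that the paper records.
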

\begin{proof}
It follows from Theorem \ref{augroundvalue} and (P2) that $\mathcal O\neq \emptyset$ and $c_x<\frac{s}{N}S^{\frac{N}{2s}}$ for any $x\in\mathcal O.$ To complete the proof,
 we only need to verify that the function $l:\overline{\Lambda}\to \mathbb R$ defined as $l(x)=c_x$ is continuous.
If $x_n\to x_0$ with $c_{x_0}<\frac{s}{N} S^{\frac{N}{2s}}$ as $n\to\infty,$ then it follows from Theorems \ref{augroundexistence}  that there exist   $(U_0,V_0)\in N_{x_0}$ and $t_n>0$  such that
 $I_{x_0}(U_0,V_0)=c_{x_0}$ and $(t_nU_0,t_nV_0)\in N_{x_n}.$ By a similar argument to the proof of (II)
in Lemma \ref{Xepsilon}, we see that $t_n$ is bounded. So we can assume that $t_n\to t_0>0$ as $n\to\infty.$ As a consequence,
\begin{equation}\label{19}
\lim_{n\to\infty}c_{x_n}\leq\lim_{n\to\infty}I_{x_n}(t_nU_0,t_nV_0)=I_{x_0}(t_0U_0.t_0V_0)\leq I_{x_0}(U_0,V_0)=c_{x_0}.
\end{equation}
Hence, we can assume $c_{x_n}<\frac{s}{N}S^{\frac{N}{2s}}$ for large $n.$ It follows from Theorem \ref{augroundexistence}
that $I_{x_n}$ has a positive vector ground state $(U_n,V_n)$ for large $n.$
By a similar argument to the proof of (I) in lemma \ref{psbound}, $\{(U_n,V_n)\}$
is bounded in $X_{s,x_0}.$ As a result, there exist a subsequence denoted still by $\{(U_n,V_n)\}$  and $(U,V)\in X_{s,x_0}$ such that
$$(U_n,V_n)\rightharpoonup(U,V)\quad\text{as~} n\to \infty,$$
and  $(U,V)$ is a critical of $I_{x_0}.$ Moreover, Similar to \eqref{66}, we see
\begin{equation}\label{69}
\begin{aligned}
c_{x_0}&\leq I_{x_0}(U,V)-\frac{1}{p+1}I_{x_0}^\prime(U,V)(U,V)\\&\leq\lim_{n\to\infty}\left(I_{x_n}(U_n,V_n)
-\frac{1}{p+1}I_{x_n}^\prime(U_n,V_n)(U_n,V_n)\right)=\lim_{n\to\infty}c_{x_n}.
\end{aligned}
\end{equation}
This together with \eqref{19} implies that $\mathop{\lim}\limits_{n\to\infty}l(x_n)=l(x_0)$ for $x_0$ with $c_{x_0}<\frac{s}{N}S^{\frac{N}{2s}}.$
Assume $x_n\to x_0$ with $c_{x_0}=\frac{s}{N}S^{\frac{N}{2s}},$
we only need to prove that the limit holds for $\{x_n\}$ with $c_{x_n}<\frac{s}{N}S^{\frac{N}{2s}}.$
In fact, we conclude by a similar argument to \eqref{69} that
$$\frac{s}{N}S^{\frac{N}{2s}}=c_{x_0}\leq\lim_{n\to\infty}c_{x_n}\leq\frac{s}{N}S^{\frac{N}{2s}}.$$
 The proof is complete.
\end{proof}
It follows from a similar argument to \eqref{66} that the (PS)$_{c_{x_0}}$ sequence for $I_{x_0}$ with $x_0\in\mathcal O$ has a convergent subsequence. Furthermore, we can investigate the compactness for a sequence in $N_{x_0},$ which plays a crucial role in the discussion of multiplicity.

\begin{lemma}\label{42}
Under assumptions $(P1)$, $(P2)$ and $0<\lambda<\min\{\sqrt{a_0b_0}$, $\sqrt{(a_1-\mu_0)b_0}\},$
 if $\{(U_n,V_n)\}\subset N_{x_0}$ be such that $I_{x_0}(U_n,V_n)\to c_{x_0}$ for some $x_0\in \mathcal O,$  then there exists a convergent subsequence.
\end{lemma}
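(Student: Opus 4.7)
The plan is to convert the minimizing sequence on the Nehari manifold into a Palais--Smale sequence for $I_{x_0}$, then use the compactness machinery already built up in Lemma~\ref{psnonnegative} together with the Fatou argument of Theorem~\ref{augroundexistence} to produce a subsequence converging (after translation) in $X_{s,x_0}$.

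First I would verify that $\{(U_n,V_n)\}$ is bounded in $X_{s,x_0}$. Since $(U_n,V_n)\in N_{x_0}$ gives $I_{x_0}'(U_n,V_n)(U_n,V_n)=0$, subtracting $\tfrac{1}{p+1}$ of this from $I_{x_0}(U_n,V_n)=c_{x_0}+o(1)$ and using $\lambda<\sqrt{a_0 b_0}$ via \eqref{delta0} (exactly as in Lemma~\ref{psbound}(I), adapted to the autonomous functional) produces the uniform bound $\|(U_n,V_n)\|_{X_{s,x_0}}\leq C$. Next I would upgrade the minimizing sequence to a Palais--Smale sequence. The homeomorphism $\overline{\tau}_{x_0}:S_{s,x_0}^+\to N_{x_0}$ and the auxiliary functional $\overline\Phi_{x_0}$ (the autonomous analogues of Lemma~\ref{Xepsilon}(III) and Lemma~\ref{sphere}) satisfy $\inf_{S_{s,x_0}^+}\overline\Phi_{x_0}=\inf_{N_{x_0}}I_{x_0}=c_{x_0}$. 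Applying Ekeland's variational principle to $\overline\Phi_{x_0}$ on $S_{s,x_0}^+$ starting from $\overline{\tau}_{x_0}^{-1}(U_n,V_n)$ yields $\{W_n\}\subset S_{s,x_0}^+$ with $\overline\Phi_{x_0}(W_n)\to c_{x_0}$, $\overline\Phi_{x_0}'(W_n)\to 0$, and $\|W_n-\overline{\tau}_{x_0}^{-1}(U_n,V_n)\|_{X_{s,x_0}}\to 0$. Part~(III) of the autonomous analogue of Lemma~\ref{sphere} then turns $\overline{\tau}_{x_0}(W_n)\in N_{x_0}$ into a $(PS)_{c_{x_0}}$ sequence for the unconstrained $I_{x_0}$, while continuity of $\overline{\tau}_{x_0}$ gives $\|\overline{\tau}_{x_0}(W_n)-(U_n,V_n)\|_{X_{s,x_0}}\to 0$. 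Hence without loss of generality $(U_n,V_n)$ is itself a bounded $(PS)_{c_{x_0}}$ sequence of $I_{x_0}$.

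Because $x_0\in\mathcal O$, the lemma immediately preceding the statement yields $c_{x_0}<\tfrac{s}{N}S^{N/(2s)}$, so Lemma~\ref{psnonnegative} supplies $r,\theta>0$ and $\{z_n\}\subset\mathbb R^N$ with $\liminf_n\int_{B_r(z_n)}u_n^2+v_n^2\,dx\geq\theta$. By translation invariance of $I_{x_0}$, the translated sequence $(\hat U_n,\hat V_n):=(U_n(\cdot+z_n,\cdot),V_n(\cdot+z_n,\cdot))$ is still a $(PS)_{c_{x_0}}$ sequence in $N_{x_0}$, and up to a subsequence converges weakly in $X_{s,x_0}$ to some $(U,V)$ and strongly in $L^q_{\mathrm{loc}}(\mathbb R^N)$ for all $q\in[2,2_s^*)$. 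The non-vanishing estimate forces $(U,V)\neq(0,0)$, and testing against $C_c^\infty$ shows $I_{x_0}'(U,V)=0$. Mirroring the Fatou computation of \eqref{66},
\[
c_{x_0}\leq I_{x_0}(U,V)-\tfrac{1}{p+1}I_{x_0}'(U,V)(U,V)\leq \liminf_{n\to\infty}\bigl[I_{x_0}(\hat U_n,\hat V_n)-\tfrac{1}{p+1}I_{x_0}'(\hat U_n,\hat V_n)(\hat U_n,\hat V_n)\bigr]=c_{x_0},
\]
forces equality in every weak-limit inequality, so $\|\hat U_n\|_{\chi_s}\to\|U\|_{\chi_s}$, $\|\hat V_n\|_{\chi_s}\to\|V\|_{\chi_s}$ and the corresponding $L^2$ norms also converge; combined with weak convergence, this upgrades to strong convergence $(\hat U_n,\hat V_n)\to(U,V)$ in $X_{s,x_0}$.

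The main technical obstacle is the last step: ruling out loss of compactness from the critical nonlinearity in the $V$-equation. The Fatou inequalities succeed precisely because the strict bound $c_{x_0}<\tfrac{s}{N}S^{N/(2s)}$ excludes a concentration Dirac for $\hat V_n$ (via Lemma~\ref{68}, in the same way it was used in the proof of Lemma~\ref{pscondition}), while the preliminary translation by $z_n$ simultaneously eliminates vanishing. Together these two ingredients neutralize the two failure modes of compactness for the critical term and close the argument.
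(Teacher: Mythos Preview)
Your approach mirrors the paper's: upgrade the minimizing sequence on $N_{x_0}$ to a Palais--Smale sequence via Ekeland's principle on $S_{s,x_0}^+$, then run the translation-plus-Fatou argument of Theorem~\ref{augroundexistence}. The one ingredient you omit is the paper's Step~1, which shows that $\overline\Phi_{x_0}(W_n)\to\infty$ whenever $\operatorname{dist}(W_n,\partial S_{s,x_0}^+)\to 0$. This is not cosmetic: $S_{s,x_0}^+$ is an \emph{open} subset of the unit sphere (the condition $|\operatorname{supp}u^+|+|\operatorname{supp}v^+|>0$ is open), hence incomplete, so Ekeland's principle cannot be applied there directly. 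The paper instead extends $\overline\Phi_{x_0}$ to the closure $\overline{S_{s,x_0}^+}$ by setting it equal to $+\infty$ on the boundary; Step~1 makes this extension lower semicontinuous, after which Ekeland on the complete space $\overline{S_{s,x_0}^+}$ produces a PS sequence whose finite $\overline\Phi_{x_0}$-values force it to remain in the interior $S_{s,x_0}^+$. Without this, your sentence ``Applying Ekeland's variational principle to $\overline\Phi_{x_0}$ on $S_{s,x_0}^+$'' is not justified.

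A secondary remark: your final paragraph invokes the concentration-compactness Lemma~\ref{68} to justify the Fatou step, but this is unnecessary. The chain \eqref{66} uses only Fatou's lemma and weak lower semicontinuity of the equivalent quadratic form $\|(U,V)\|_{X_{s,x_0}}^2-2\lambda\int uv$ (positive definite by \eqref{delta0}); equality throughout then gives convergence of this equivalent norm, and together with weak convergence this yields strong convergence. The strict bound $c_{x_0}<\frac{s}{N}S^{N/(2s)}$ enters only through Lemma~\ref{psnonnegative} to preclude vanishing, not to rule out concentration Diracs.
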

\begin{proof}
We divide the proof into two steps.

{\bf Step 1.} We claim that if $\{(U_n,V_n)\}\subset S_{s,x_0}^+$ satisfies dist$((U_n,V_n),\partial S_{s,x_0}^+)\to 0$ as
$n\to\infty,$ then $||\overline{\tau}_{x_0}(U_n,V_n)||\to\infty$ and $I_{x_0}(\overline{\tau}_{x_0}(U_n,V_n))\to \infty.$
 In fact, for any $(U,V)\in\partial S_{s,x_0}^+ $, we have $u(x)\leq 0$, $v(x)\leq 0$ in $\mathbb R^N.$
 As a consequence, $u_n^+(x)\leq |u_n(x)-u(x)|$ for any $n.$ Hence, for any $(U,V)\in\partial S_{s,x_0}^+,$
 \begin{equation*}
 \begin{aligned}
 \int_{\mathbb R^N}(u_n^+)^{p+1}dx&\leq \int_{\mathbb R^N}|u_n-u|^{p+1}dx
 \leq C||(U_n-U,V_n-V)||^{p+1}_{X_{s,x_0}}.
 \end{aligned}
 \end{equation*}
Noting that dist$((U_n,V_n),\partial S_{s,x_0}^+)\to 0$, then
 $$\limsup_{n\to\infty}\int_{\mathbb R^N}(u_n^+)^{p+1}dx=0.$$
 Similarly, we have $$\limsup_{n\to\infty}\int_{\mathbb R^N}(v_n^+)^{2_s^*}dx=0.$$
 As a result, for any $t>0,$
 \begin{equation*}
 \liminf_{n\to\infty}I_{x_0}(\overline\tau_{x_0}(U_n,V_n))\geq \liminf_{n\to\infty}I_{x_0}(tU_n,tV_n)\geq
  \liminf_{n\to\infty}\frac{\delta_0t^2}{4}||(U_n,V_n)||_{X_{s,x_0}}^2=\frac{\delta_0t^2}{4}.
 \end{equation*}
 Hence $I_{x_0}(\overline{\tau}_{x_0}(U_n,V_n))\to \infty.$ Thanks to Lemma \ref{59}, we conclude that
 if $\{\overline \tau_{x_0}(U_n,V_n)\}$ is bounded, then so is $\{I_{x_0}(\overline\tau_{x_0}(U_n,V_n))\}$. The claim then holds.

 {\bf Step 2.} In view of $I_{x_0}(U_n,V_n)\to c_{x_0},$ we have
 $$(\overline{U}_n,\overline{V}_n):=\overline{\tau}_{x_0}^{-1}(U_n,V_n)\in S_{s,x_0}^+,\quad
 \overline{\Phi}_{x_0}(\overline{U}_n,\overline{V}_n)=I_{x_0}(U_n,V_n)\to c_{x_0}.$$
 Define $\Upsilon:\overline{S_{s,x_0}^+}\to \mathbb R\cup\{\infty\}$ by
 \begin{equation*}
\Upsilon(U,V)=
\begin{cases}
 \overline{\Phi}_{x_0}(U,V)\quad&\text{if ~} (U,V)\in  S_{s,x_0}^+,\\
 \infty&\text{if ~} (U,V)\in  \partial S_{s,x_0}^+.
 \end{cases}
 \end{equation*}
 Then $\Upsilon\in C(\overline{S_{s,x_0}^+}, \mathbb R\cup\{\infty\})$ is bounded from below by $c_{x_0}$. Therefore, there is
 a (PS)$_{c_{x_0}}$ sequence
 $\{(\overline{W}_n,\overline{Z}_n)\}\subset  S_{s,x_0}^+$ for $\overline{\Phi}_{x_0}$ such that
 $$||(\overline{W}_n-\overline{U}_n,\overline{Z}_n-\overline{V}_n)||_{X_{s,x_0}}\to 0.$$
 Set $(W_n,Z_n)=\overline{\tau}_{x_0}(\overline{W}_n,\overline{Z}_n),$ then $\{(W_n,Z_n)\}$ is a (PS)$_{c_{x_0}}$ sequence of $I_{x_0}$ and
 \begin{equation}\label{41}
 ||({W}_n-{U}_n,{Z}_n-{V}_n)||_{X_{s,x_0}}\to 0.
 \end{equation}
Similar to the proof of Theorem \ref{augroundexistence}, the sequence
$\{(W_n,Z_n)\}$ has a convergent subsequence in $X_{s,x_0}$, and then so does $\{(U_n,V_n)\}$ in terms of \eqref{41}.
The proof is complete.
\end{proof}

\section{Multiplicity of  solutions for the modified  system }
In this section, we relate the number of positive vector solutions of \eqref{mp} to the topology of the set $\mathcal O$.
For this aim,  we choose $\delta>0$ such that $\mathcal O_\delta\subset\subset \Lambda. $

\begin{lemma}\label{37}
Under assumptions $(P1)$ and $(P2),$ if $\lambda<\min\{\sqrt{a_0b_0},\sqrt{(a_1-\mu_0)b_0}\},$ then
there holds
$$\lim_{\epsilon\to0}I_\epsilon\left(\tau_{\epsilon}\left(\widetilde{U}_{\epsilon,x_0},\widetilde{V}_{\epsilon,x_0}\right)\right)
=
c_0 \quad \text{uniformly for~} x_0\in \mathcal{O}, $$
where $\left(\widetilde{U}_{\epsilon,x_0},\widetilde{V}_{\epsilon,x_0}\right)$ is defined in \eqref{wide},
and $\tau_\epsilon$ is specified by (III) in Lemma \ref{Xepsilon}.
\end{lemma}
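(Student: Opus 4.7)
The plan is to combine the energy expansion already worked out inside the proof of Lemma \ref{energybound} with the Nehari-uniqueness in Lemma \ref{Xepsilon}(I) to pin down the limit exactly, and to upgrade the fixed-$x_0$ convergence to uniformity in $x_0$ via a compactness-plus-contradiction argument that invokes Lemma \ref{42}.

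First, fix $x_0 \in \mathcal O$ and let $(U_{x_0}, V_{x_0})$ be a positive vector ground state of $I_{x_0}$ provided by Theorem \ref{augroundexistence} (applicable since $c_{x_0} = c_0 < \tfrac{s}{N}S^{N/2s}$). Because $\mathcal O \subset\subset \Lambda$, the radius $r_0 := \tfrac12 \operatorname{dist}(\mathcal O, \partial \Lambda) > 0$ serves simultaneously for every $x_0 \in \mathcal O$ as the cutoff-support radius in \eqref{wide}. Let $t_\epsilon := t_{\widetilde U_{\epsilon, x_0} \widetilde V_{\epsilon, x_0}}$ be the unique Nehari parameter from Lemma \ref{Xepsilon}(I), so that $\tau_\epsilon(\widetilde U_{\epsilon, x_0}, \widetilde V_{\epsilon, x_0}) = (t_\epsilon \widetilde U_{\epsilon, x_0}, t_\epsilon \widetilde V_{\epsilon, x_0})$. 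The bounds inside the proof of Lemma \ref{energybound} already yield $0 < \theta \le \|(\widetilde U_{\epsilon, x_0}, \widetilde V_{\epsilon, x_0})\|_{X_{s,\epsilon}} \le \widetilde C$ uniformly in $\epsilon$, whence Lemma \ref{Xepsilon}(II) together with the blow-up estimate culminating in \eqref{64} gives $0 < \underline t \le t_\epsilon \le \overline t < \infty$. Along a subsequence $t_\epsilon \to t_0 > 0$. Passing to the limit in the Nehari identity $I'_\epsilon(t_\epsilon \widetilde U_{\epsilon, x_0}, t_\epsilon \widetilde V_{\epsilon, x_0})(t_\epsilon \widetilde U_{\epsilon, x_0}, t_\epsilon \widetilde V_{\epsilon, x_0}) = 0$ under the change of variables $\bar x = x - x_0/\epsilon$, together with dominated convergence using $a(\epsilon \bar x + x_0) \to a(x_0)$ and $b(\epsilon \bar x + x_0) \to b(x_0)$ locally uniformly, yields $(t_0 U_{x_0}, t_0 V_{x_0}) \in N_{x_0}$. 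Since $(U_{x_0}, V_{x_0})$ itself lies in $N_{x_0}$ and the positive Nehari parameter for $I_{x_0}$ is unique by the $x_0$-analogue of Lemma \ref{Xepsilon}(I), this forces $t_0 = 1$. Running the expansion of \eqref{9} with $t_0 = 1$ then gives the \emph{equality}
\[
I_\epsilon\!\bigl(\tau_\epsilon(\widetilde U_{\epsilon, x_0}, \widetilde V_{\epsilon, x_0})\bigr) \longrightarrow I_{x_0}(U_{x_0}, V_{x_0}) = c_{x_0} = c_0 .
\]

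To upgrade to uniform convergence over $\mathcal O$, I would argue by contradiction. Suppose there exist $\delta_1 > 0$, $\epsilon_n \to 0^+$ and $x_n \in \mathcal O$ with $|I_{\epsilon_n}(\tau_{\epsilon_n}(\widetilde U_{\epsilon_n, x_n}, \widetilde V_{\epsilon_n, x_n})) - c_0| \ge \delta_1$. Compactness of $\mathcal O$ gives, along a subsequence, $x_n \to x_\ast \in \mathcal O$. The ground states satisfy $(U_{x_n}, V_{x_n}) \in N_{x_n}$ and $I_{x_n}(U_{x_n}, V_{x_n}) = c_{x_n} = c_0$; continuity of $a, b$ makes the norms $\|\cdot\|_{X_{s, x_n}}$ uniformly equivalent to $\|\cdot\|_{X_{s, x_\ast}}$, so a mild scalar rescaling $s_n \to 1$ places $(s_n U_{x_n}, s_n V_{x_n}) \in N_{x_\ast}$ with $I_{x_\ast}(s_n U_{x_n}, s_n V_{x_n}) \to c_{x_\ast} = c_0$. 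Lemma \ref{42} applied at $x_\ast$ then extracts a further subsequence converging strongly in $X_{s, x_\ast}$ to a ground state $(U_\ast, V_\ast)$ of $I_{x_\ast}$. Armed with this strong convergence, the fixed-$x_0$ computation of the previous paragraph applied along $(\epsilon_n, x_n)$ delivers $I_{\epsilon_n}(\tau_{\epsilon_n}(\widetilde U_{\epsilon_n, x_n}, \widetilde V_{\epsilon_n, x_n})) \to I_{x_\ast}(U_\ast, V_\ast) = c_0$, contradicting the choice of $\delta_1$.

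The main obstacle is precisely this final synchronized passage, in which $\epsilon_n \to 0$, $x_n \to x_\ast$ and the varying ground state $(U_{x_n}, V_{x_n})$ must be controlled simultaneously. The fixed-$x_0$ part is a direct reprise of Lemma \ref{energybound}; the nontrivial input is strong pre-compactness of the family $\{(U_{x_n}, V_{x_n})\}$ in $\chi_s \times \chi_s$, which is exactly what guarantees that the cutoff-error contributions on the annuli $\mathcal B^+_{r_0/\epsilon_n} \setminus \mathcal B^+_{r_0/(2\epsilon_n)}$ and the weighted $L^2$ tails of $(U_{x_n}, V_{x_n})$ are $o(1)$ \emph{uniformly} in $n$. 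Verifying that the auxiliary rescaling $s_n$ that transfers $(U_{x_n}, V_{x_n})$ onto $N_{x_\ast}$ tends to $1$, and that Lemma \ref{42} indeed applies to the resulting sequence, is where the bulk of the technical care is required.
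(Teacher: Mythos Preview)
Your pointwise argument (fix $x_0\in\mathcal O$, bound $t_\epsilon$ above and below, pass to the limit in the Nehari identity to force $t_0=1$, then invoke the expansion \eqref{9}) is exactly what the paper does. For the uniformity in $x_0$, the paper simply asserts in one sentence that ``by a direct calculation similar to that in the proof of Lemma~\ref{energybound}'' the limit is uniform over $\mathcal O$; it does not spell out how to control the dependence on the varying ground state $(U_{x_0},V_{x_0})$.

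Your compactness-plus-contradiction route via Lemma~\ref{42} is a legitimate and more complete way to fill in what the paper leaves implicit. You have correctly isolated the real issue: the annular tail terms in the expansion of Lemma~\ref{energybound} involve $(U_{x_0},V_{x_0})$, and their vanishing \emph{uniformly} in $x_0$ requires tightness of the family of ground states, not just a uniform norm bound. One caveat worth recording: the proof of Lemma~\ref{42} ultimately rests on the argument of Theorem~\ref{augroundexistence}, where the (PS) sequence converges only after a translation. So the subsequence you extract from $\{(s_nU_{x_n},s_nV_{x_n})\}$ may converge to a ground state of $I_{x_\ast}$ only modulo a shift $z_n$ in $\mathbb R^N$. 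This is harmless here because $I_{x_\ast}$ is translation-invariant and the construction \eqref{wide} depends only on the profile of $(U,V)$ up to translation (a shift of the profile is absorbed into the centering at $x_n/\epsilon_n$), so all the error estimates in the expansion remain valid after replacing $(U_{x_n},V_{x_n})$ by its translate. With that remark, your auxiliary rescaling $s_n\to 1$ and the rerun of the \eqref{9}-computation along $(\epsilon_n,x_n)$ go through, and the contradiction closes.
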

\begin{proof}
It follows from (II) in Lemma \ref{Xepsilon} that there exists $t_\epsilon>0$ such that $$\tau_{\epsilon}\left(\widetilde{U}_{\epsilon,x_0},\widetilde{V}_{\epsilon,x_0}\right)
=\left(t_\epsilon\widetilde{U}_{\epsilon,x_0},t_\epsilon\widetilde{V}_{\epsilon,x_0}\right)\in N_\epsilon.$$
By a similar argument to the proof of Lemma \ref{energybound}, we see that $t_\epsilon$ is bounded.
Moreover, if there exists a    subsequence denoted still by $\{t_\epsilon\}$ such that $t_\epsilon\to t_0$ as
$\epsilon\to 0,$  then $t_0>0$. Furthermore, thanks to $I_\epsilon^\prime\left(t_\epsilon\widetilde{U}_{\epsilon,x_0},t_\epsilon\widetilde{V}_{\epsilon,x_0}\right)
\left(t_\epsilon\widetilde{U}_{\epsilon,x_0},t_\epsilon\widetilde{V}_{\epsilon,x_0}\right)=0$,
we conclude by a similar discussion to the proof of \eqref{9} that
\begin{equation*}
\begin{aligned}
0&=I_\epsilon^\prime\left(t_\epsilon\widetilde{U}_{\epsilon,x_0},t_\epsilon\widetilde{V}_{\epsilon,x_0}\right)
\left(t_\epsilon\widetilde{U}_{\epsilon,x_0},t_\epsilon\widetilde{V}_{\epsilon,x_0}\right)
\to I_{x_0}^\prime(t_0U, t_0V)(t_0U, t_0V).
\end{aligned}
\end{equation*}
Namely, $(t_0U, t_0V)\in N_{x_0}.$ Recall that $(U,V)$ is a ground state of $I_{x_0}$, then $t_0=1$, this along with \eqref{9} implies that
$$\lim_{\epsilon\to0}I_\epsilon\left(t_\epsilon\widetilde{U}_{\epsilon,x_0},t_\epsilon\widetilde{V}_{\epsilon,x_0}\right)
=I_{x_0}(U,V)=c_0.$$
By virtue of $I_\epsilon^\prime\left(t_\epsilon\widetilde{U}_{\epsilon,x_0},t_\epsilon\widetilde{V}_{\epsilon,x_0}\right)
\left(t_\epsilon\widetilde{U}_{\epsilon,x_0},t_\epsilon\widetilde{V}_{\epsilon,x_0}\right)=0$ and a direct calculation similar to that in the proof of Lemma \ref{energybound}, we can see  the limit holds uniformly for $x_0\in\mathcal O.$
The proof is complete.
\end{proof}
We define $H_\epsilon:\mathcal O\to N_\epsilon$ by
\begin{equation*}
H_\epsilon(z)=\tau_\epsilon\left(\widetilde{U}_{\epsilon,z},\widetilde{V}_{\epsilon,z}\right)
=\left(t_\epsilon\widetilde{U}_{\epsilon,z},t_\epsilon\widetilde{V}_{\epsilon,z}\right).
\end{equation*}
Then by Lemma \ref{37}, there holds
\begin{equation}\label{39}
\lim_{\epsilon\to 0}I_\epsilon(H_\epsilon(z))=c_0\quad \text{uniformly in~} z\in\mathcal O.
\end{equation}
Moreover,
$t_\epsilon\to 1 $ as $\epsilon\to0.$ For any fixed $z\in\mathcal O,$
let $h(\epsilon)=\left|I_\epsilon\left(\tau_\epsilon\left(\widetilde{U}_{\epsilon,z},
\widetilde{V}_{\epsilon,z}\right)\right)-c_0\right|$, and
\begin{equation}\label{subnehari}
\widetilde{N}_\epsilon=\{(U,V)\in N_\epsilon:I_\epsilon(U,V)\leq c_0+h(\epsilon)\},
\end{equation}
then $h(\epsilon)\to 0$ as $\epsilon\to 0,$ and $\widetilde{N}_\epsilon\neq \emptyset$ because of
$\tau_\epsilon\left(\widetilde{U}_{\epsilon,z},\widetilde{V}_{\epsilon,z}\right)\in \widetilde{N}_\epsilon$
for any $z\in\mathcal O.$

Next, we show a concentration property for the functions in $\widetilde {N}_\epsilon.$  More  general, we have

\begin{lemma}\label{30}
Assume  (P1)-(P2) and $\lambda<\min\{\sqrt{a_0b_0},\sqrt{(a_1-\mu_0)b_0}\}.$
Let sequence $\{(U_\epsilon,V_\epsilon)\}$ satisfy $(U_\epsilon,V_\epsilon)\in N_\epsilon$ for any small $\epsilon$ and  $I_\epsilon(U_\epsilon,V_\epsilon)\to c_0$ as $\epsilon\to0.$ If either $(U_\epsilon,V_\epsilon)$ is a critical point of $I_\epsilon$ for each  small $\epsilon$  or assumption {\rm(P3)} holds, then
\begin{itemize}
\item [\rm(I)] there exist a subsequence $\{(U_{\epsilon_n},V_{\epsilon_n})\}$, a sequence $\{z_{\epsilon_n}\}\subset\mathbb R^N$ and $(\widetilde{U},\widetilde{V})\in\mathcal E\times \mathcal E$ such that
$$(\widetilde{U}_{\epsilon_n},\widetilde{V}_{\epsilon_n})\to (\widetilde{U},\widetilde{V}) \quad \text{as~} n\to \infty.$$
where
\begin{equation}\label{tildeu}
  (\widetilde{U}_{\epsilon_n}(x,y),\widetilde{V}_{\epsilon_n}(x,y)):=({U}_{\epsilon_n}(x+z_{\epsilon_n},y),
{V}_{\epsilon_n}(x+z_{\epsilon_n},y))
\quad \text{in~}\mathbb R^{N+1}_+.
\end{equation}
\item[\rm (II)]there exists
$z_0\in \mathcal O$ such that
$\epsilon_nz_{\epsilon_n}\to z_o$ as $n\to\infty.$
 \item [\rm(III)] $(\widetilde{U},\widetilde{V})$ is a positive vector ground state of system \eqref{autonomoussys} with $x_0$ replaced by $z_0.$
\end{itemize}
\end{lemma}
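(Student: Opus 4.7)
The plan is to follow the concentration--compactness scheme of the proofs of Lemma~\ref{psnonnegative} and Theorem~\ref{augroundexistence}, and then perform a location argument for the translation parameters. First, boundedness of $\{(U_\epsilon,V_\epsilon)\}$ in $X_{s,\epsilon}$ follows from $(U_\epsilon,V_\epsilon)\in N_\epsilon$, $I_\epsilon(U_\epsilon,V_\epsilon)\to c_0$, and the coercivity margin \eqref{delta0}: the identity $I_\epsilon(U_\epsilon,V_\epsilon)=I_\epsilon(U_\epsilon,V_\epsilon)-\frac{1}{p+1}I'_\epsilon(U_\epsilon,V_\epsilon)(U_\epsilon,V_\epsilon)$ combined with the growth control on the truncations $f_\epsilon,g_\epsilon$ gives $\|(U_\epsilon,V_\epsilon)\|_{X_{s,\epsilon}}\leq C$ exactly as in Lemma~\ref{psbound}(I). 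Next, I would establish non-vanishing: there exist $r,\theta>0$ and $\{z_\epsilon\}\subset\mathbb{R}^N$ with $\liminf_{\epsilon\to 0}\int_{B_r(z_\epsilon)}(u_\epsilon^2+v_\epsilon^2)\,dx\geq \theta$. Vanishing would, through Lemma~\ref{concnetration2} and the Nehari identity, force either $(U_\epsilon,V_\epsilon)\to 0$ strongly (contradicting $c_0>0$) or an Aubin--Talenti bubble in the $v$-component of mass $\geq S^{N/2s}$, and then $\liminf I_\epsilon(U_\epsilon,V_\epsilon)\geq \frac{s}{N}S^{N/2s}>c_0$ by Theorem~\ref{augroundvalue}(II) combined with the hypothesis that some $x_0\in\Lambda$ satisfies $a(x_0)\leq \mu_0$ (given either directly or through $\mathcal O\neq\emptyset$).

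Setting $(\widetilde U_{\epsilon_n},\widetilde V_{\epsilon_n})(x,y):=(U_{\epsilon_n}(x+z_{\epsilon_n},y),V_{\epsilon_n}(x+z_{\epsilon_n},y))$ and extracting a subsequence gives weak convergence in $\chi_s\times\chi_s$, pointwise a.e., and strong local convergence in $L^q_{\mathrm{loc}}$ for $q\in[2,2_s^*)$, to some $(\widetilde U,\widetilde V)\neq(0,0)$. The decisive step is to prove that $y_n:=\epsilon_n z_{\epsilon_n}$ is bounded and every cluster point $y_0$ lies in $\Lambda$. If $|y_n|\to\infty$ or $y_0\notin\overline\Lambda$, then since $\Lambda$ is bounded, $\mathrm{dist}(z_{\epsilon_n},\Lambda_{\epsilon_n})=\mathrm{dist}(y_n,\Lambda)/\epsilon_n\to\infty$, so for any fixed $R>0$ the shifted nonlinearities $f_{\epsilon_n}(\cdot+z_{\epsilon_n},\cdot)$, $g_{\epsilon_n}(\cdot+z_{\epsilon_n},\cdot)$ coincide with the subcritical truncations $f,g$ of \eqref{fandg} on $B_R$. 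Testing the Euler--Lagrange identity for $(U_{\epsilon_n},V_{\epsilon_n})$ against $(\widetilde U_{\epsilon_n},\widetilde V_{\epsilon_n})$ and combining the choice of $\alpha$ in \eqref{alpha} with \eqref{delta0}, as in the proof of Lemma~\ref{nehari}(II), yields $\|(\widetilde U,\widetilde V)\|_{X_{s,y_0}}=0$, contradicting non-vanishing. In the (P3) branch, where $(U_\epsilon,V_\epsilon)$ is only assumed to satisfy $I_\epsilon(U_\epsilon,V_\epsilon)\to c_0$, one first replaces $(U_\epsilon,V_\epsilon)$ by a nearby Palais--Smale sequence obtained by Ekeland's principle applied to $\overline{\Phi}_\epsilon$ on $S_{s,\epsilon}^+$ via the homeomorphism $\overline{\tau}_\epsilon$ of Lemma~\ref{Xepsilon} (as in the proof of Lemma~\ref{42}), and then runs the same argument. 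Finally, if $y_0\in\partial\Lambda$ then Theorem~\ref{augroundvalue}(I) forces $c_{y_0}=\frac{s}{N}S^{N/2s}>c_0$, which will contradict the energy chain of the next step. Hence $y_0\in\Lambda$.

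With $y_0\in\Lambda$, for any test pair of compact support $K$ one has $K+z_{\epsilon_n}\subset\Lambda_{\epsilon_n}$ eventually, so $f_{\epsilon_n}\equiv(\cdot)^p$ and $g_{\epsilon_n}\equiv(\cdot)^{2_s^*-1}$ on the relevant set while $a(\epsilon_n(\cdot+z_{\epsilon_n}))\to a(y_0)$ and $b(\epsilon_n(\cdot+z_{\epsilon_n}))\to b(y_0)$ uniformly on $K$. Passing to the limit in the weak formulation gives that $(\widetilde U,\widetilde V)$ is a weak solution of \eqref{autonomoussys} with $x_0=y_0$, so $(\widetilde U,\widetilde V)\in N_{y_0}$ and $c_{y_0}\leq I_{y_0}(\widetilde U,\widetilde V)$. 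A Fatou argument identical to the chain in \eqref{66}, using $\lambda<\sqrt{a_0 b_0}$ to secure the sign-controlled coercivity of $I_{y_0}-\frac{1}{p+1}I'_{y_0}$, then gives $I_{y_0}(\widetilde U,\widetilde V)\leq\liminf_{n\to\infty} I_{\epsilon_n}(U_{\epsilon_n},V_{\epsilon_n})=c_0=\inf_{\Lambda}c$, forcing $c_{y_0}=c_0$, i.e., $y_0\in\mathcal O$, and $(\widetilde U,\widetilde V)$ is a vector ground state of \eqref{autonomoussys}. Equality throughout the Fatou chain combined with Lemma~\ref{liebtype} promotes weak to strong convergence in $\chi_s\times\chi_s$, and the $L^2$ contribution is controlled by the analogue of Lemma~\ref{psbound}(II) for the translated sequence. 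Positivity of $\widetilde u,\widetilde v$ follows by testing $I'_{y_0}(\widetilde U,\widetilde V)(-\widetilde U^-,-\widetilde V^-)=0$ and applying the maximum-principle argument at the end of the proof of Lemma~\ref{energybound}. The main obstacle is the location step: excluding $|y_n|\to\infty$ and $y_0\in\partial\Lambda$, which hinges on the careful tuning of the truncation in \eqref{alpha}, the coercivity gap \eqref{delta0}, and, in the (P3) branch, the Ekeland detour required to produce a genuine approximate critical sequence for which the Euler--Lagrange identity is available.
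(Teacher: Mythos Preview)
Your concentration--compactness framework matches the paper's for the critical-point case, and the boundedness and non-vanishing steps are correct. Two points deserve attention.

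\textbf{The (P3) branch differs substantially from the paper.} You propose to apply Ekeland's principle to $\overline{\Phi}_\epsilon$ and replace $(U_\epsilon,V_\epsilon)$ by a genuine Palais--Smale sequence, then rerun Case~1. For Ekeland to produce a replacement close to the original with small derivative, you need $I_\epsilon(U_\epsilon,V_\epsilon)-c_\epsilon\to 0$, i.e., $c_\epsilon\to c_0$; this is not available a priori and itself requires (P3), via the pointwise comparison $I_{x_0}\leq I_\epsilon$ (from $a(x_0)=a_0$, $b(x_0)=b_0$ and \eqref{FG}). The paper bypasses Ekeland entirely: it uses (P3) to project $(\widetilde U_\epsilon,\widetilde V_\epsilon)$ onto $N_{x_0}$ and obtains the squeeze
\[
c_0\leq I_{x_0}(t_\epsilon\widetilde U_\epsilon,t_\epsilon\widetilde V_\epsilon)\leq I_\epsilon(t_\epsilon U_\epsilon,t_\epsilon V_\epsilon)\leq I_\epsilon(U_\epsilon,V_\epsilon)\to c_0,
\]
so $\{(t_\epsilon\widetilde U_\epsilon,t_\epsilon\widetilde V_\epsilon)\}$ is a minimizing sequence for $I_{x_0}$ on $N_{x_0}$; Lemma~\ref{42} then supplies strong convergence directly. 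Only \emph{after} this does the paper run the location argument, now using the established strong convergence (rather than the Euler--Lagrange equation) to make the $\Lambda_\epsilon$-integrals vanish. Your Ekeland detour is salvageable once $c_\epsilon\to c_0$ is secured, but it is heavier than the paper's route and you have not supplied that preliminary step.

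\textbf{The boundary exclusion is circular as written.} You argue that $y_0\in\partial\Lambda$ will contradict the energy chain $c_{y_0}\leq I_{y_0}(\widetilde U,\widetilde V)\leq c_0$ of the next paragraph, but that chain rests on identifying $(\widetilde U,\widetilde V)$ as a solution of \eqref{autonomoussys}, which in turn uses $y_0\in\Lambda$ (so that $K+z_{\epsilon_n}\subset\Lambda_{\epsilon_n}$ eventually). When $y_0\in\partial\Lambda$, the weak limit solves the \emph{mixed} system \eqref{limitsys}, with $\chi$ the characteristic function of a half-space, not \eqref{autonomoussys}. The paper handles this by introducing the auxiliary functional $L_{z_0}$ of \eqref{limiteuler}: since $(\widetilde U,\widetilde V)$ is a critical point of $L_{z_0}$ and $L_{z_0}\geq I_{z_0}$ pointwise (by \eqref{FG}), one has $L_{z_0}(\widetilde U,\widetilde V)=\max_{t>0} L_{z_0}(t\widetilde U,t\widetilde V)\geq\max_{t>0} I_{z_0}(t\widetilde U,t\widetilde V)\geq c_{z_0}$, while a Fatou argument gives $L_{z_0}(\widetilde U,\widetilde V)\leq c_0$. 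Hence $c_{z_0}\leq c_0<\frac{s}{N}S^{N/2s}$, contradicting Theorem~\ref{augroundvalue}(I). You need this intermediate functional to close the argument; without it the exclusion of $\partial\Lambda$ is a genuine gap.
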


\begin{proof}
Note that  $\{(U_\epsilon,V_\epsilon)\}$ is bounded by assumptions. Then we claim that there exist $\{z_\epsilon\}\subset\mathbb R^N,$ $r>0$
and  $\theta>0$ such that
\begin{equation}\label{21}
\liminf_{\epsilon\to0}\int_{B_r(z_\epsilon)}u_\epsilon^2+v^2_\epsilon dx>\theta.
\end{equation}
Otherwise, let $l=\mathop{\lim}\limits_{\epsilon \to 0}||(U_\epsilon.V_\epsilon)||^2_{X_{s,\epsilon}},$ then it follows from Lemma \ref{concnetration2} and $(U_\epsilon.V_\epsilon)\in N_\epsilon$ that
$$\lim_{\epsilon\to 0}\int_{\mathbb R^N}v_\epsilon^{2_s^*}dx\geq l,$$
which along with Lemmas \ref{nehari} and \ref{59} that $l>S^{\frac{N}{2s}}.$ Furthermore,
$$\frac{s}{N}S^{\frac{N}{2s}}\leq\lim_{\epsilon\to 0}\left(I_\epsilon(U_\epsilon,V_\epsilon)-\frac{1}{2}I^\prime_\epsilon(U_\epsilon,V_\epsilon)(U_\epsilon,V_\epsilon)\right)=\mathop{\lim}\limits_{\epsilon\to 0} I_\epsilon(U_\epsilon,V_\epsilon)=c_0<\frac{s}{N}S^{\frac{N}{2s}},$$
we reach a contradiction.
Now define
\begin{equation}\label{22}
 (\widetilde{U}_{\epsilon}(x,y),\widetilde{V}_{\epsilon}(x,y))=({U}_{\epsilon}(x+z_{\epsilon},y),
{V}_{\epsilon}(x+z_{\epsilon},y))
\quad \text{in~}\mathbb R^{N+1}_+.
\end{equation}
Then $\{(\widetilde{U}_{\epsilon},\widetilde{V}_{\epsilon})\}$ is bounded in $\mathcal E\times\mathcal E,$
and then there exist a subsequence denoted still by  $\{(\widetilde{U}_{\epsilon},\widetilde{V}_{\epsilon})\}$ for convenience and $(\widetilde{U},\widetilde{V})$ such that
\begin{equation}\label{46}
(\widetilde{U}_{\epsilon},\widetilde{V}_{\epsilon})\rightharpoonup(\widetilde{U},\widetilde{V})~~ \text{in~} \mathcal E\times \mathcal E,\quad (\widetilde{u}_{\epsilon},\widetilde{v}_{\epsilon})\to(\widetilde{u},\widetilde{v})~~\text{in~}
L_{\text{loc}}^s(\mathbb R^N)\times L_{\text{loc}}^t(\mathbb R^N),~~\forall s,t\in [2,2_s^*).
\end{equation}
As a consequence,
\begin{equation}\label{25}
\int_{B_r}\tilde{u}^2+\tilde{v}^2 dx>\theta.
\end{equation}
We consider two cases in the following.

{\bf Case 1.} $(U_\epsilon,V_\epsilon)$ is a critical point of $I_\epsilon$ for each small $\epsilon.$ Then
$u_\epsilon\geq 0,$ $v_\epsilon\geq0$ in $\mathbb R^N$ for each small $\epsilon$.
 We
claim that $\mathop{\lim}\limits_{\epsilon\to0}{\rm dist}(\epsilon z_\epsilon,\Lambda)=0.$  Otherwise,
there exists $\rho>0$ such that ${\rm dist}(\epsilon z_\epsilon,\Lambda)>\rho$ for small $\epsilon.$ As a result,
\begin{equation*}
\Lambda_\epsilon\subset \mathbb R^N\setminus B_{\frac{\rho}{\epsilon}}(z_\epsilon).
\end{equation*}
It then follows from \eqref{mp} and \eqref{22} that for any  $(W,Z)\in C_c^\infty{(\overline{\mathbb R^{N+1}_+})}
\times C_c^\infty{(\overline{\mathbb R^{N+1}_+})}$ with $W\geq 0,Z\geq 0$ in $ \overline{\mathbb R^{N+1}_+},$
\begin{align*}
&\int_{\mathbb R^{N+1}_+}y^{1-2s}\nabla\widetilde{U}_\epsilon\cdot\nabla Wdxdy+\int_{\mathbb R^{N+1}_+}y^{1-2s}\nabla\widetilde{V}_\epsilon\cdot\nabla Zdxdy
+\int_{\mathbb R^N}a(\epsilon x+\epsilon z_\epsilon) \tilde{u}_\epsilon wdx \\
&+\int_{\mathbb R^N}b(\epsilon x+\epsilon z_\epsilon) \tilde{v}_\epsilon zdx
-\lambda\int_{\mathbb R^N} \tilde{u}_\epsilon zdx
-\lambda\int_{\mathbb R^N} \tilde{v}_\epsilon wdx
-\int_{(\Lambda_\epsilon-z_\epsilon)^c} f_{\epsilon}(x+z_\epsilon,\tilde{u}_\epsilon)wdx\\
&-\int_{(\Lambda_\epsilon-z_\epsilon)^c} g_{\epsilon}(x+z_\epsilon,\tilde{v}_\epsilon)zdx\\
=&\int_{\Lambda_\epsilon-z_\epsilon}\tilde u_\epsilon^{p}wdx
+\int_{\Lambda_\epsilon-z_\epsilon}\tilde v_\epsilon^{2_s^*-1}zdx.
\end{align*}
Letting  $\epsilon\to0$, then
\begin{equation*}
\begin{aligned}
&\int_{\mathbb R^{N+1}_+}y^{1-2s}\nabla\widetilde{U}\cdot\nabla Wdxdy+\int_{\mathbb R^{N+1}_+}y^{1-2s}\nabla\widetilde{V}\cdot\nabla Zdxdy
+\int_{\mathbb R^N}a_0 \tilde{u}wdx +\int_{\mathbb R^N}b_0 \tilde{v} zdx
\\
&
-\lambda\int_{\mathbb R^N} \tilde{u} zdx
-\lambda\int_{\mathbb R^N} \tilde{v} wdx
-\alpha^{p-1}\int_{\mathbb R^N} \tilde{u} wdx
-\alpha^{2_s^*-2}\int_{\mathbb R^N}\tilde{v} zdx\\
\leq& 0.
\end{aligned}
\end{equation*}
By the density of $C_c^\infty{(\overline{\mathbb R^{N+1}_+})}
$
in $\mathcal E$, we further get
$$\int_{\mathbb R^{N+1}_+}y^{1-2s}|\nabla\widetilde{U}|^2 dxdy+\int_{\mathbb R^{N+1}_+}y^{1-2s}|\nabla\widetilde{V}|^2dxdy
+\int_{\mathbb R^N}a_0 \tilde{u}^2dx
+\int_{\mathbb R^N}b_0 \tilde{v}^2dx=0,$$
which contradicts \eqref{25}. Hence the claim holds. As a consequence, $\{\epsilon z_\epsilon\}$ is bounded.
Then there exist a subsequence denoted still by $\{\epsilon z_\epsilon\}$ and $z_0\in\overline{\Lambda}$
such that
\begin{equation}\label{45}
\lim_{\epsilon\to0}\epsilon z_{\epsilon}=z_0.
\end{equation}
In view of $(\widetilde{U}_{\epsilon},\widetilde{V}_{\epsilon})\rightharpoonup
(\widetilde{U},\widetilde{V})$ in $\mathcal E\times\mathcal E$, we conclude that $(\widetilde{U},\widetilde{V})$
is a solution of system
 \begin{equation}\label{limitsys}
 \begin{cases}
 -{\rm div} (y^{1-2s}\nabla U)=0\quad &\text{in} \ \mathbb R^{N+1}_+,\\
 -{\rm div} (y^{1-2s}\nabla V)=0& \text{in}\ \mathbb R^{N+1}_+,\\
 -\mathop{\lim}\limits_{y\to0 }y^{1-2s}\frac{\partial U(x,y)}{\partial y}+a(z_0)u=\chi u^p
 +(1-\chi)f(u)+\lambda v &\text{on}\ \mathbb R^N,\\
 -\mathop{\lim}\limits_{y\to0 }y^{1-2s}\frac{\partial V(x,y)}{\partial y}+b(z_0)v=\chi v^{2_s^*-1}
 +(1-\chi)g(v)+\lambda u\quad &\text{on}\ \mathbb R^N,
 \end{cases}
 \end{equation}
 where $\chi(x)=\mathop{\lim}\limits_{\epsilon\to0}\chi_{\Lambda}(\epsilon x+\epsilon z_\epsilon)$ and
 hence, if $z_0\in\Lambda$ then $\chi$ is the characteristic function of $\mathbb R^N,$ and if $z_0\in\partial\Lambda$
 then $\chi$ is the characteristic function of a half-space in $\mathbb R^N.$
  The Euler-Lagrange functional associated to system \eqref{limitsys} is given by
 \begin{equation}\label{limiteuler}
 \begin{aligned}
 L_{z_0}(U,V)=&\frac{1}{2}\int_{\mathbb R^{N+1}_+} y^{1-2s} |\nabla U|^2 dxdy+\frac{1}{2}\int_{\mathbb R^{N+1}_+} y^{1-2s} |\nabla V|^2 dxdy+\frac{1}{2}\int_{\mathbb R^N}a(z_0)u^2 dx \\
 &+\frac{1}{2}\int_{\mathbb R^N}b( z_0)v^2 dx-\int_{\mathbb R^N}\chi \left(\frac{u^{p+1}}{p+1}+\frac{v^{2_s^*}}{2_s^*}\right) dx\\
 &-\int_{\mathbb R^N}(1-\chi)(F(u)+G(v))dx -\lambda\int_{\mathbb R^N}uvdx.
 \end{aligned}
 \end{equation}
 It then follows from \eqref{fandg} and a similar result to Lemma \ref{eeq} that
 \begin{equation}\label{26}
 L_{z_0}(\widetilde{U},\widetilde{V})=\max_{t>0}L_{z_0}(t\widetilde{U},t\widetilde{V})
 \geq \max_{t>0}I_{z_0}(t\widetilde{U},t\widetilde{V})\geq c_{z_0}.
 \end{equation}
 On the other hand, by Fatou's Lemma,
 \begin{equation}\label{27}
 \begin{aligned}
 L_{z_0}(\widetilde{U},\widetilde{V})=&L_{z_0}(\widetilde{U},\widetilde{V})-\frac{1}{2}
 L_{z_0}^\prime(\widetilde{U},\widetilde{V})(\widetilde{U},\widetilde{V})\\
 =&\int_{\mathbb R^N}\chi\left[\left(\frac{1}{2}-\frac{1}{p+1}\right)\tilde{u}^{p+1}
 +\left(\frac{1}{2}-\frac{1}{2_s^*}\right)\tilde{v}^{2_s^*}\right]dx\\
 &+\int_{\mathbb R^N}(1-\chi)\left(\frac{1}{2}f(\tilde{u})\tilde{u}-F(\tilde{u})+
 \frac{1}{2}g(\tilde{v})\tilde{v}-G(\tilde{v})\right)dx\\
 \leq&\lim_{\epsilon\to0}\int_{\mathbb R^N}\chi_{\Lambda_\epsilon}(x+z_\epsilon)\left[\left(\frac{1}{2}-\frac{1}{p+1}\right)\tilde{u}_\epsilon^{p+1}
 +\left(\frac{1}{2}-\frac{1}{2_s^*}\right)\tilde{v}_\epsilon^{2_s^*}\right]dx\\
 &+\lim_{\epsilon\to 0}\int_{\mathbb R^N}(1-\chi_{\Lambda_\epsilon}(x+z_\epsilon))\left(\frac{1}{2}f(\tilde{u}_\epsilon)\tilde{u}_\epsilon-F(\tilde{u}_\epsilon)+
 \frac{1}{2}g(\tilde{v}_\epsilon)\tilde{v}_\epsilon-G(\tilde{v}_\epsilon)\right)dx\\
 =&\lim_{\epsilon\to 0}\left(I_{\epsilon}(U_\epsilon,V_{\epsilon})-
 \frac{1}{2}I^\prime_{\epsilon}(U_\epsilon,V_{\epsilon})(U_\epsilon,V_{\epsilon})\right)\\
 =&\lim_{\epsilon\to 0}I_{\epsilon}(U_\epsilon,V_{\epsilon})=c_0.
 \end{aligned}
 \end{equation}
It then follows from \eqref{26} that
 $c_{z_0}=c_0=\mathop{\inf}\limits_{y\in\Lambda}c_y,$ that is, $z_0\in \mathcal O\subset\subset\Lambda.$
 Hence $\chi=1$ in $\mathbb R^N,$  and then $(\widetilde{U},\widetilde{V})$ is a ground state of system \eqref{autonomoussys} with $x_0$ replaced by $z_0.$ Moreover,
 \begin{equation*}
 \begin{aligned}
 \lim_{\epsilon\to 0}\int_{\Lambda_\epsilon-z_\epsilon}\tilde{u}_\epsilon^{p+1}
 dx
 =\int_{\mathbb R^N}\tilde{u}^{p+1}dx
  ,\quad
 \lim_{\epsilon\to 0}\int_{\Lambda_\epsilon-z_\epsilon}
 \tilde{v}_\epsilon^{2_s^*}dx
 =\int_{\mathbb R^N}
 \tilde{v}^{2_s^*}dx.
 \end{aligned}
 \end{equation*}
 Furthermore, by straight calculations, we see
 \begin{align*}
&\int_{\mathbb R^{N+1}_+}y^{1-2s}|\nabla\widetilde{U}_\epsilon|^2dxdy+\int_{\mathbb R^{N+1}_+}y^{1-2s}|\nabla\widetilde{V}_\epsilon|^2dxdy+\frac{a_0\delta_0}{2}\int_{\mathbb R^N} \tilde{u}_\epsilon ^2dx
+\frac{b_0\delta_0}{2}\int_{\mathbb R^N} \tilde{v}_\epsilon ^2dx\\
&+\int_{\mathbb R^N}\left(a(\epsilon x+\epsilon z_\epsilon)-\frac{a_0\delta_0}{2}\right) \tilde{u}_\epsilon^2dx +\int_{\mathbb R^N}\left(b(\epsilon x+\epsilon z_\epsilon)-\frac{b_0\delta_0}{2}\right) \tilde{v}_\epsilon ^2dx\\
&-\int_{(\Lambda_\epsilon-z_\epsilon)^c} f_{\epsilon}(x+z_\epsilon,\tilde{u}_\epsilon)\tilde{u}_\epsilon dx-\int_{(\Lambda_\epsilon-z_\epsilon)^c} g_{\epsilon}(x+z_\epsilon,\tilde{v}_\epsilon)\tilde{v}_\epsilon dx-2\lambda\int_{\mathbb R^N} \tilde{u}_\epsilon \tilde{v}_\epsilon dx\\
=&\frac{1}{p+1}\int_{\Lambda_\epsilon-z_\epsilon}\tilde{u}_\epsilon^{p+1}dx
+\frac{1}{2_s^*}\int_{\Lambda_\epsilon-z_\epsilon}\tilde{v}_\epsilon^{2_s^*}dx\\
\to& \frac{1}{p+1}\int_{\mathbb R^N}\tilde{u}^{p+1}dx
+\frac{1}{2_s^*}\int_{\mathbb R^N}\tilde{v}^{2_s^*}dx\\
=&\int_{\mathbb R^{N+1}_+}y^{1-2s}|\nabla\widetilde{U}|^2dxdy+\int_{\mathbb R^{N+1}_+}y^{1-2s}|\nabla\widetilde{V}|^2dxdy+\frac{a_0\delta_0}{2}\int_{\mathbb R^N} \tilde{u} ^2dx
+\frac{b_0\delta_0}{2}\int_{\mathbb R^N} \tilde{v} ^2dx\\
&+\int_{\mathbb R^N}\left(a(z_0)-\frac{a_0\delta_0}{2}\right) \tilde{u}^2dx +\int_{\mathbb R^N}\left(b( z_0)-\frac{b_0\delta_0}{2}\right) \tilde{v} ^2dx
-2\lambda\int_{\mathbb R^N} \tilde{u} \tilde{v} dx.
\end{align*}
On the other hand, we infer from \eqref{delta0} that
\begin{equation*}
\begin{aligned}
&\left(a(\epsilon x+\epsilon z_\epsilon)-\frac{a_0\delta_0}{2}\right) \tilde{u}_\epsilon^2 + \left(b(\epsilon x+\epsilon z_\epsilon)-\frac{b_0\delta_0}{2}\right) \tilde{v}_\epsilon ^2
-f_{\epsilon}(x+z_\epsilon,\tilde{u}_\epsilon)\tilde{u}_\epsilon  -g_{\epsilon}(x+z_\epsilon,\tilde{v}_\epsilon)\tilde{v}_\epsilon \\
&-2\lambda \tilde{u}_\epsilon \tilde{v}_\epsilon
\geq0.
\end{aligned}
 \end{equation*}
The Fatou's Lemma then implies that
 \begin{equation*}
 \begin{aligned}
 &\lim_{\epsilon\to 0}\int_{\mathbb R^{N+1}_+}y^{1-2s}|\nabla\widetilde{U}_\epsilon|^2dxdy+\int_{\mathbb R^{N+1}_+}y^{1-2s}|\nabla\widetilde{V}_\epsilon|^2dxdy\\
 =&\int_{\mathbb R^{N+1}_+}y^{1-2s}|\nabla\widetilde{U}|^2dxdy+\int_{\mathbb R^{N+1}_+}y^{1-2s}|\nabla\widetilde{V}|^2dxdy,
 \end{aligned}
 \end{equation*}
 $$\lim_{\epsilon\to 0}\int_{\mathbb R^N}\tilde{u}_\epsilon^2dx=\int_{\mathbb R^N}\tilde{u}^2dx,\quad
 \lim_{\epsilon\to 0}\int_{\mathbb R^N}\tilde{v}_\epsilon^2dx=\int_{\mathbb R^N}\tilde{v}^2dx. $$
 Hence $(\widetilde{U}_\epsilon,\widetilde{V}_\epsilon)\to (\widetilde{U},\widetilde{V})$
 as $\epsilon\to 0$
 in $\mathcal E\times\mathcal E.$

{\bf Case 2.} There is $x_0\in\Lambda$ such that $a(x_0)=a_0$ and $b(x_0)=b_0.$ Then
 there exists $t_\epsilon>0$ such that
$(t_\epsilon\widetilde{U}_{\epsilon},t_\epsilon\widetilde{V}_{\epsilon})\in N_{x_0}.$
By  \eqref{fepsilon}, \eqref{gepsilon} and a change of variables, we have
\begin{equation}\label{47}
\begin{aligned}
c_0\leq& I_{x_0}(t_\epsilon\widetilde{U}_{\epsilon},t_\epsilon\widetilde{V}_{\epsilon})\\
=&\frac{t_\epsilon^2}{2}\int_{\mathbb R^{N+1}_+}y^{1-2s}|\nabla {U}_{\epsilon}|^2dxdy
+\frac{t_\epsilon^2}{2}\int_{\mathbb R^{N+1}_+}y^{1-2s}|\nabla {V}_{\epsilon}|^2dxdy\\
&+\frac{t_\epsilon^2}{2}\int_{\mathbb R^N} a(x_0) {u}^2_\epsilon dx +\frac{t_\epsilon^2}{2}\int_{\mathbb R^N} b(x_0) {v}^2_\epsilon dx-\frac{t_\epsilon^{p+1}}{p+1}\int_{\mathbb R^N} {u}^{p+1}_\epsilon dx\\
&-\frac{t_\epsilon^{2_s^*}}{2_s^*}\int_{\mathbb R^N} {v}^{2_s^*}_\epsilon dx-
2\lambda \int_{\mathbb R^N}{u}_\epsilon{v}_\epsilon dx\\
\leq & I_\epsilon (t_\epsilon U_\epsilon, t_\epsilon V_\epsilon)\\
\leq& c_\epsilon\to c_0,
\end{aligned}
\end{equation}
and then $\{(t_\epsilon\widetilde{U}_{\epsilon},t_\epsilon\widetilde{V}_{\epsilon})\}$ is bounded,
which along with the boundedness of $\{(\widetilde{U}_{\epsilon},\widetilde{V}_{\epsilon})\}$ implies that
there exists a subsequence of $t_\epsilon$ (still denoted by $t_\epsilon$) such that it converges to $t_0>0$.
In view of Lemma \ref{42} and \eqref{46}, we then derive, up to a subsequence,
\begin{equation}\label{44}
(\widetilde{U}_{\epsilon},\widetilde{V}_{\epsilon})\to\left(\widetilde{U},\widetilde{V}\right)~~ \text{in~} \mathcal E\times \mathcal E.
\end{equation}
We now claim that $\mathop{\lim}\limits_{\epsilon\to0}{\rm dist}(\epsilon z_\epsilon,\Lambda)=0.$
Otherwise, there exists $\rho>0$ such that ${\rm dist}(\epsilon z_\epsilon,\Lambda)>\rho$ for small $\epsilon.$ As a result,
\begin{equation}\label{43}
\Lambda_\epsilon\subset \mathbb R^N\setminus B_{\frac{\rho}{\epsilon}}(z_\epsilon).
\end{equation}
Since $I^\prime_\epsilon({U}_{\epsilon},{V}_{\epsilon})({U}_{\epsilon},{V}_{\epsilon})=0$, we have
\begin{equation*}
\begin{aligned}
\frac{\delta_0}{2}||(\widetilde U, \widetilde V)||_{X_{s,x_0}}&=
\frac{\delta_0}{2}\lim_{\epsilon\to0}||( \widetilde U_\epsilon,  \widetilde V_\epsilon)||_{X_{s,x_0}}\leq
\lim_{\epsilon\to0}\left(\int_{\Lambda_\epsilon-z_\epsilon} |u_\epsilon|^{p+1}dx + \int_{\Lambda_\epsilon-z_\epsilon} |v_\epsilon|^{2_s^*}dx\right)=0,
\end{aligned}
\end{equation*}
 which contradicts \eqref{25}.
The claim then holds. As a consequence,  there is $z_0\in \overline{\Lambda}$ such that, up to a subsequence,
$\epsilon z_\epsilon\to z_0$ as $\epsilon\to 0.$  Now we  prove $z_0\in \mathcal O.$ In fact,
it follows from Fatou's lemma, \eqref{44}, \eqref{fepsilon}, \eqref{gepsilon}
and $(t_\epsilon\widetilde{U}_\epsilon, t_\epsilon\widetilde{V}_\epsilon)\to(t_0\widetilde{U}, t_0\widetilde{V})$ that
\begin{equation}\label{48}
\begin{aligned}
&I_{z_0}(t_0\widetilde{U},t_0\widetilde{V})\\
\leq &\liminf_{\epsilon\to 0}\left( \frac{t_\epsilon^2}{2}\int_{\mathbb R^{N+1}_+}y^{1-2s}|\nabla \widetilde{U}_\epsilon|^2+ \frac{t_\epsilon^2}{2}\int_{\mathbb R^{N+1}_+}y^{1-2s}|\nabla \widetilde{V}_\epsilon|^2
+\frac{t_\epsilon^2}{2}\int_{\mathbb R^{N}}a(\epsilon x+\epsilon z_\epsilon)\widetilde{u}_\epsilon^2dx \right.\\
&\qquad\qquad\left.+\frac{t_\epsilon^2}{2}\int_{\mathbb R^{N}}b(\epsilon x+\epsilon z_\epsilon)\widetilde{v}_\epsilon^2dx -\int_{\mathbb R^{N}} F_\epsilon(x+z_\epsilon, t_\epsilon\widetilde{u}_\epsilon)dx\right.\\
&\qquad\qquad\left.-\int_{\mathbb R^{N}} G_\epsilon(x+z_\epsilon, t_\epsilon\widetilde{v}_\epsilon)dx-2\lambda t_\epsilon^2\int_{\mathbb R^{N}}\widetilde{u}_\epsilon\widetilde{u}_\epsilon dx
\right)\\
=&\liminf_{\epsilon\to 0}I_\epsilon(t_\epsilon U_\epsilon,t_\epsilon V_\epsilon)\leq c_0.
\end{aligned}
\end{equation}
 On the other hand, $c_0=I_{x_0}(t_0\widetilde{U},t_0\widetilde{V})$ due to \eqref{47} and \eqref{44}.
 Hence $a(z_0)=a_0$ and $b(z_0)=b_0$ by \eqref{48}, that is $z_0\in \mathcal O.$ It then follows from \eqref{44} that
 $(\widetilde{U},\widetilde{V})$ is a ground state of \eqref{autonomoussys} with $x_0$ replaced by $z_0.$
 The proof is complete.
\end{proof}
\begin{cor}
Assume  (P1)-(P2) and $\lambda<\min\{\sqrt{a_0b_0},\sqrt{(a_1-\mu_0)b_0}\}.$ Let $(U_\epsilon,V_\epsilon)$ be a
ground state of $I_\epsilon,$ then $I_\epsilon(U_\epsilon,V_\epsilon)\to c_0$ as $\epsilon\to0.$
\end{cor}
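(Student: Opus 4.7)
The plan is to establish the two matching bounds $\limsup_{\epsilon\to 0}I_\epsilon(U_\epsilon,V_\epsilon)\leq c_0$ and $\liminf_{\epsilon\to 0}I_\epsilon(U_\epsilon,V_\epsilon)\geq c_0$ separately, the first by a direct test-function comparison and the second by a compactness-by-contradiction argument built on Lemma \ref{30}.

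For the upper bound I would fix any $z_0\in\mathcal{O}$ and test against $H_\epsilon(z_0)=\tau_\epsilon(\widetilde{U}_{\epsilon,z_0},\widetilde{V}_{\epsilon,z_0})\in N_\epsilon$, which was shown in Lemma \ref{37} to satisfy $I_\epsilon(H_\epsilon(z_0))\to c_0$. Since $(U_\epsilon,V_\epsilon)$ is a ground state of $I_\epsilon$, $I_\epsilon(U_\epsilon,V_\epsilon)=c_\epsilon=\inf_{N_\epsilon}I_\epsilon\leq I_\epsilon(H_\epsilon(z_0))$, and letting $\epsilon\to 0$ gives $\limsup_{\epsilon\to 0}I_\epsilon(U_\epsilon,V_\epsilon)\leq c_0$.

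For the lower bound I would argue by contradiction. Suppose some subsequence satisfied $I_{\epsilon_n}(U_{\epsilon_n},V_{\epsilon_n})\to d<c_0$. The sequence $\{(U_{\epsilon_n},V_{\epsilon_n})\}$ is bounded in $X_{s,\epsilon_n}$ by the standard argument of Lemma \ref{psbound}(I), and each term is a critical point of $I_{\epsilon_n}$. I would then run Case~1 of the proof of Lemma \ref{30} essentially verbatim, observing that the hypothesis $I_\epsilon(U_\epsilon,V_\epsilon)\to c_0$ enters there only in two places: in the non-vanishing estimate \eqref{21}, which uses nothing beyond the strict bound $d<\frac{s}{N}S^{\frac{N}{2s}}$ (still valid here since (P2) and Theorem \ref{augroundvalue}(II) give $d<c_0<\frac{s}{N}S^{\frac{N}{2s}}$), and in the Fatou step \eqref{27}, where $c_0$ is merely the numerical value of the limit. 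This yields translations $z_{\epsilon_n}\in\mathbb{R}^N$ with $\epsilon_n z_{\epsilon_n}\to z_0\in\overline{\Lambda}$ and a nontrivial weak limit $(\widetilde{U},\widetilde{V})\in\mathcal{E}\times\mathcal{E}$ of the translated sequence $(\widetilde{U}_{\epsilon_n},\widetilde{V}_{\epsilon_n})$ as in \eqref{tildeu}, which is a critical point of the limiting functional $L_{z_0}$ of \eqref{limiteuler}. Combining the Nehari-type inequality \eqref{26} with the Fatou estimate \eqref{27} but with $c_0$ replaced by $d$ yields
\[
c_{z_0}\leq L_{z_0}(\widetilde{U},\widetilde{V})\leq d<c_0.
\]

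To close the argument I would examine both possible locations of $z_0$. If $z_0\in\partial\Lambda$, Theorem \ref{augroundvalue}(I) forces $c_{z_0}=\frac{s}{N}S^{\frac{N}{2s}}$, but (P2) together with Theorem \ref{augroundvalue}(II) already ensures $c_0<\frac{s}{N}S^{\frac{N}{2s}}$, so $c_{z_0}>c_0>d\geq c_{z_0}$, a contradiction. If $z_0\in\Lambda$, then the very definition $c_0=\inf_{y\in\Lambda}c_y$ gives $c_{z_0}\geq c_0>d\geq c_{z_0}$, again a contradiction. Hence no such subsequence exists, $\liminf_{\epsilon\to 0}I_\epsilon(U_\epsilon,V_\epsilon)\geq c_0$, and together with the upper bound this proves $I_\epsilon(U_\epsilon,V_\epsilon)\to c_0$. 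The most delicate step will be the transplanted concentration-compactness analysis, where one must verify that no mass escapes nor concentrates into a critical bubble on the $v$-component; this will rest entirely on $d<\frac{s}{N}S^{\frac{N}{2s}}$, the same inequality that underlies Lemmas \ref{pscondition} and \ref{30}.
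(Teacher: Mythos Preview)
Your proposal is correct and follows essentially the same approach as the paper's one-line proof, which merely cites \eqref{26}, \eqref{27}, Lemma \ref{energybound}, and Theorem \ref{augroundvalue}. You have faithfully unpacked that sketch: the upper bound via a test function on $\mathcal{O}$ (the paper uses Lemma \ref{energybound}, you use the refined Lemma \ref{37}, which amounts to the same thing), and the lower bound by re-running the concentration argument of Lemma \ref{30}, Case 1, with the limiting value $d$ in place of $c_0$, then using Theorem \ref{augroundvalue} and the definition of $c_0$ to reach a contradiction at any $z_0\in\overline{\Lambda}$.
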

\begin{proof}
The proof follows directly from \eqref{26}, \eqref{27}, Lemma \ref{energybound} and Theorem \ref{augroundvalue}.
\end{proof}

Set $\rho=\rho(\delta)$ such that $\mathcal O_\delta\subset B_\rho.$
Define $\chi:\mathbb R^N\to \mathbb R^N$ as $\chi(x)=x$ if $x\in B_\rho$ and
$\chi(x)=\frac{\rho x}{|x|}$ if $|x|>\rho.$ Consider the barycenter map $\beta_\epsilon: N_\epsilon\to \mathbb R^N,$
\begin{equation*}
\beta_{\epsilon}(U,V):= \frac{\int_{\mathbb R^N}\chi(\epsilon x)u^2dx+\int_{\mathbb R^N}\chi(\epsilon x)v^2dx} {\int_{\mathbb R^N}u^2dx+\int_{\mathbb R^N}v^2dx}.
\end{equation*}
Then by a change of variables $\bar x=x-\frac{z}{\epsilon}$ and the dominated convergence theorem, we have
\begin{equation}\label{40}
\begin{aligned}
&\beta_\epsilon\left(\tau_\epsilon\left(\widetilde{U}_{\epsilon,z},\widetilde{V}_{\epsilon,z}\right)\right)\\
=&\frac{t_\epsilon^2\int_{\mathbb R^N}\chi(\epsilon \bar x +z)u^2(\bar x)\eta^2(\epsilon \bar x,0)d\bar x+t_\epsilon^2\int_{\mathbb R^N}\chi(\epsilon \bar x +z)v^2(\bar x)\eta^2(\epsilon \bar x,0)d\bar x} {\int_{\mathbb R^N}u^2(\bar x)\eta^2(\epsilon\bar x,0)d\bar x+\int_{\mathbb R^N}v^2(\bar x)\eta^2(\epsilon \bar x,0)d\bar x}\\
=&t_\epsilon^2 z+\epsilon t_\epsilon^2 \frac{\int_{\mathbb R^N} \bar x u^2(\bar x)\eta^2(\epsilon \bar x,0)d\bar x+\int_{\mathbb R^N} \bar x v^2(\bar x)\eta^2(\epsilon \bar x,0)d\bar x} {\int_{\mathbb R^N}u^2(\bar x)\eta^2(\epsilon\bar x,0)d\bar x+\int_{\mathbb R^N}v^2(\bar x)\eta^2(\epsilon \bar x,0)d\bar x}\\
\to& z\quad\text{uniformly in~} z\in \mathcal O~\text{as~} \epsilon\to 0.
\end{aligned}
\end{equation}
where $\left(\widetilde{U}_{\epsilon,z},\widetilde{V}_{\epsilon,z}\right)$ is defined in \eqref{wide},
and $\tau_\epsilon$ is specified by (III) in Lemma \ref{Xepsilon}.
We further discuss  the concentration property  of  barycenters  for the functions in $\widetilde{N}_\epsilon$.

\begin{lemma}Assume  (P1)-(P3) and $\lambda<\min\{\sqrt{a_0b_0},\sqrt{(a_1-\mu_0)b_0}\}.$ Then
for any  $\delta>0$ such that $\mathcal O_\delta\subset\subset \Lambda,$  there holds
\begin{equation*}
\lim_{\epsilon\to 0}\sup_{(U,V)\in\widetilde{N}_\epsilon}{\rm dist}(\beta_\epsilon(U,V),\mathcal O_\delta)=0.
\end{equation*}
\end{lemma}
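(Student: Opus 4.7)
The plan is to argue by contradiction. Suppose the conclusion fails. Then there exist $\delta_0>0$, a sequence $\epsilon_n\downarrow 0$ and $(U_n,V_n)\in\widetilde{N}_{\epsilon_n}$ with ${\rm dist}(\beta_{\epsilon_n}(U_n,V_n),\mathcal O_\delta)\geq \delta_0$ for every $n$. The first step is to promote this to the hypothesis of Lemma~\ref{30}, namely $I_{\epsilon_n}(U_n,V_n)\to c_0$: by the definition of $\widetilde{N}_\epsilon$ in \eqref{subnehari} and Lemma~\ref{eeq} we have the sandwich $c_{\epsilon_n}\leq I_{\epsilon_n}(U_n,V_n)\leq c_0+h(\epsilon_n)$, while the Corollary immediately preceding this lemma (combined with $I_\epsilon(U_\epsilon,V_\epsilon)=c_\epsilon$) gives $c_{\epsilon_n}\to c_0$, and $h(\epsilon_n)\to 0$ by construction.

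Next, since (P3) is in force I would apply Lemma~\ref{30} to the sequence $\{(U_n,V_n)\}$: up to a subsequence, this produces $\{z_n\}\subset\mathbb R^N$, a point $z_0\in \mathcal O$ and a positive vector ground state $(\widetilde U,\widetilde V)\in \mathcal E\times \mathcal E$ of \eqref{autonomoussys} with $x_0$ replaced by $z_0$, such that the translates
$$(\widetilde U_n(x,y),\widetilde V_n(x,y)):=(U_n(x+z_n,y),V_n(x+z_n,y))$$
converge to $(\widetilde U,\widetilde V)$ in $\mathcal E\times\mathcal E$, and $\epsilon_n z_n\to z_0$ as $n\to\infty$.

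The third step is to rewrite $\beta_{\epsilon_n}(U_n,V_n)$ in the translated variables $\bar x=x-z_n$, mirroring the computation leading to \eqref{40}:
$$\beta_{\epsilon_n}(U_n,V_n)=\frac{\int_{\mathbb R^N}\chi(\epsilon_n\bar x+\epsilon_n z_n)(\tilde u_n^2+\tilde v_n^2)\,d\bar x}{\int_{\mathbb R^N}(\tilde u_n^2+\tilde v_n^2)\,d\bar x}.$$
Since $\chi$ is continuous and uniformly bounded by $\rho$, $\epsilon_n\bar x+\epsilon_n z_n\to z_0$ pointwise for each fixed $\bar x$, and $(\tilde u_n,\tilde v_n)\to(\tilde u,\tilde v)\not\equiv(0,0)$ in $L^2(\mathbb R^N)\times L^2(\mathbb R^N)$, Lebesgue dominated convergence yields $\beta_{\epsilon_n}(U_n,V_n)\to \chi(z_0)=z_0$, the last equality holding because $z_0\in \mathcal O\subset \mathcal O_\delta\subset B_\rho$. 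Since $z_0\in\mathcal O_\delta$ this forces ${\rm dist}(z_0,\mathcal O_\delta)=0$, contradicting ${\rm dist}(\beta_{\epsilon_n}(U_n,V_n),\mathcal O_\delta)\geq\delta_0>0$.

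The step I expect to be the main obstacle is the first one: pinning down $I_{\epsilon_n}(U_n,V_n)\to c_0$ so that Lemma~\ref{30} becomes available. The upper half of $c_\epsilon\to c_0$ is supplied by Lemma~\ref{energybound}, while the matching lower half sits in the preceding corollary and rests ultimately on the Fatou-type argument \eqref{26}--\eqref{27} inside the proof of Lemma~\ref{30}. Once this sandwich is in place, everything that follows is a routine translation-compactness argument running in parallel with \eqref{40}; the non-vanishing of the denominator $\int(\tilde u^2+\tilde v^2)$ is automatic since $(\widetilde U,\widetilde V)$ is a positive, hence nontrivial, vector ground state.
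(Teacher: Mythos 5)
Your proof is correct and follows essentially the same route as the paper: establish $I_\epsilon(U_\epsilon,V_\epsilon)\to c_0$ via the sandwich between $c_\epsilon$ and $c_0+h(\epsilon)$, invoke Lemma~\ref{30} under (P3) to obtain translated compactness and $\epsilon_n z_n\to z_0\in\mathcal O$, and then pass to the limit in the barycenter formula via dominated convergence. The paper argues directly by selecting a near-supremal $(U_\epsilon,V_\epsilon)\in\widetilde N_\epsilon$ and showing $\beta_\epsilon(U_\epsilon,V_\epsilon)-z_\epsilon\to 0$, whereas you argue by contradiction, but this is merely a cosmetic difference in presentation.
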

\begin{proof}
For any $\epsilon>0$, there exists $(U_\epsilon,V_\epsilon)\in\widetilde{N}_\epsilon$ such that
$$\sup_{(U,V)\in\widetilde{N}_\epsilon}{\rm dist}(\beta_\epsilon(U,V),\mathcal O_\delta)\leq
{\rm dist}(\beta_\epsilon(U_\epsilon,V_\epsilon),\mathcal O_\delta)+o(\epsilon).$$
In view of \eqref{26} and \eqref{27}, we see
$$c_0\leq\liminf_{\epsilon\to0}c_\epsilon\leq I_\epsilon(U_\epsilon,V_\epsilon)\leq c_0+h(\epsilon).$$
Lemma \ref{30} then implies that there exists $\{\tilde{z}_\epsilon\}\subset\mathbb R^N$ such that
\begin{equation}\label{38}
 z_\epsilon:=\epsilon \tilde{z}_\epsilon
\to z_0\in \mathcal O.
\end{equation}
As a consequence, for small $\epsilon,$ we have
$z_\epsilon\in \mathcal O_\delta$
 and
\begin{equation*}
\begin{aligned}
\beta_\epsilon(U_\epsilon,V_\epsilon)=z_\epsilon+\frac{\int_{\mathbb R^N}(\chi(\epsilon \bar x+z_\epsilon)-z_\epsilon) u^2(\bar x+\tilde{z}_\epsilon)d\bar x+\int_{\mathbb R^N} (\chi(\epsilon \bar x+z_\epsilon)-z_\epsilon) v^2(\bar x+\tilde{z}_\epsilon)d\bar x} {\int_{\mathbb R^N}u^2(\bar x+\tilde{z}_\epsilon)d\bar x+\int_{\mathbb R^N}v^2(\bar x+\tilde{z}_\epsilon)d\bar x}.
\end{aligned}
\end{equation*}
The dominated convergence theorem further yields that $$\lim_{\epsilon\to0} {\rm dist}(\beta_\epsilon(U_\epsilon,V_\epsilon),z_\epsilon)=0.$$
As a consequence,
$$\sup_{(U,V)\in\widetilde{N}_\epsilon}{\rm dist}(\beta_\epsilon(U,V),\mathcal O_\delta)\leq{\rm dist}
(\beta_\epsilon(U_\epsilon,V_\epsilon),\mathcal O_\delta)+o(\epsilon)\leq  {\rm dist}(\beta_\epsilon(U_\epsilon,V_\epsilon),z_\epsilon)+o(\epsilon)\to 0$$
as $\epsilon\to0.$ The proof is complete.
\end{proof}

\begin{thm}\label{multiplicity}
Assume  (P1)-(P3) and $\lambda<\min\{\sqrt{a_0b_0},\sqrt{(a_1-\mu_0)b_0}\}.$
Then for any  $\delta>0$ such that $\mathcal O_\delta\subset\subset \Lambda$ and small $\epsilon$, system \eqref{mp} has at least $cat_{\mathcal O_\delta}(\mathcal O)$ positive vector solutions.
Moreover, if we denote by $\{(U_\epsilon,V_\epsilon)\}$ a sequence of these solutions, then
$$\lim_{\epsilon\to0} I_\epsilon(U_\epsilon, V_\epsilon)\to c_0.$$
\end{thm}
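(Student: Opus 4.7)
\bigskip

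\noindent\textbf{Proof proposal for Theorem \ref{multiplicity}.}

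The plan is to apply Lusternik--Schnirelmann category theory to the functional $\overline{\Phi}_\epsilon$ on the submanifold $S_{s,\epsilon}^+$ of the sphere, exploiting the homeomorphism $\overline{\tau}_\epsilon\colon S_{s,\epsilon}^+\to N_\epsilon$ from Lemma~\ref{Xepsilon}~(III), the $(PS)_d$ condition for $d\in(0,\tfrac{s}{N}S^{N/(2s)})$ from Corollary~\ref{spherepscondition}, and the bound $c_0<\tfrac{s}{N}S^{N/(2s)}$ from Theorem~\ref{augroundvalue}. First, I would introduce the sublevel set
\[
\widetilde{S}_{s,\epsilon}^+ := \{(U,V)\in S_{s,\epsilon}^+ : \overline{\Phi}_\epsilon(U,V)\le c_0+h(\epsilon)\},
\]
where $h(\epsilon)\to 0$ is the function used to define $\widetilde{N}_\epsilon$ in \eqref{subnehari}, so that $\overline{\tau}_\epsilon(\widetilde{S}_{s,\epsilon}^+)=\widetilde{N}_\epsilon$ by Lemma~\ref{sphere}~(IV). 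Then I would consider the composition
\[
\mathcal{O}\;\xrightarrow{\;H_\epsilon\;}\;\widetilde{N}_\epsilon\;\xrightarrow{\;\overline{\tau}_\epsilon^{-1}\;}\;\widetilde{S}_{s,\epsilon}^+\;\xrightarrow{\;\beta_\epsilon\circ\overline{\tau}_\epsilon\;}\;\mathcal{O}_\delta,
\]
which is well-defined for small $\epsilon$: the image of $H_\epsilon$ lies in $\widetilde{N}_\epsilon$ by \eqref{39}, and the image of $\beta_\epsilon$ lies in $\mathcal{O}_\delta$ by the previous lemma.

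The key topological step is to verify that the total composition $z\mapsto \beta_\epsilon(H_\epsilon(z))$ is homotopic to the inclusion $\mathcal{O}\hookrightarrow \mathcal{O}_\delta$. By \eqref{40} this composition converges uniformly to the identity on $\mathcal{O}$ as $\epsilon\to 0$, so for sufficiently small $\epsilon$ the straight-line homotopy
\[
H(t,z) = (1-t)z + t\,\beta_\epsilon(H_\epsilon(z)), \qquad t\in[0,1],\; z\in \mathcal{O},
\]
stays inside $\mathcal{O}_\delta$ (here we use that $\mathcal{O}_\delta\subset\subset \Lambda$ is a thick neighbourhood of $\mathcal{O}$ in $\mathbb{R}^N$). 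A standard argument from Lusternik--Schnirelmann theory then gives
\[
\mathrm{cat}_{\widetilde{S}_{s,\epsilon}^+}(\widetilde{S}_{s,\epsilon}^+) \;\ge\; \mathrm{cat}_{\mathcal{O}_\delta}(\mathcal{O}).
\]

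Next, I would invoke the abstract multiplicity theorem for $C^1$-functionals on $C^{1,1}$-Banach manifolds: since $\overline{\Phi}_\epsilon$ is bounded from below on $S_{s,\epsilon}^+$ (by Lemma~\ref{sphere}~(IV) together with Lemma~\ref{nehari}~(III)) and satisfies the $(PS)_d$ condition at every level $d<\tfrac{s}{N}S^{N/(2s)}$ (Corollary~\ref{spherepscondition}), and since the Lusternik--Schnirelmann deformation lemma applies inside the set $\{\overline{\Phi}_\epsilon<\tfrac{s}{N}S^{N/(2s)}\}$ which contains $\widetilde{S}_{s,\epsilon}^+$ for small $\epsilon$ (because $c_0+h(\epsilon)<\tfrac{s}{N}S^{N/(2s)}$), we conclude that $\overline{\Phi}_\epsilon$ possesses at least $\mathrm{cat}_{\mathcal{O}_\delta}(\mathcal{O})$ critical points in $\widetilde{S}_{s,\epsilon}^+$. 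Pushing these back through $\overline{\tau}_\epsilon$ and using Lemma~\ref{sphere}~(IV), we obtain at least $\mathrm{cat}_{\mathcal{O}_\delta}(\mathcal{O})$ critical points $(U_\epsilon,V_\epsilon)$ of $I_\epsilon$ in $\widetilde{N}_\epsilon$, and by construction $I_\epsilon(U_\epsilon,V_\epsilon)\in[c_\epsilon,c_0+h(\epsilon)]\to c_0$.

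Finally, I would verify that each such critical point is in fact a \emph{positive} vector solution, by testing $I_\epsilon'(U_\epsilon,V_\epsilon)$ against $(-U_\epsilon^-,-V_\epsilon^-)$ and using \eqref{delta0} as in the last paragraph of the proof of Lemma~\ref{energybound}: this forces $u_\epsilon,v_\epsilon\ge 0$, after which the strong maximum principle argument based on \eqref{10} rules out any interior zero (the alternative $u_\epsilon\equiv 0$ or $v_\epsilon\equiv 0$ contradicts $(U_\epsilon,V_\epsilon)\in \widetilde{N}_\epsilon\subset N_\epsilon$ and Lemma~\ref{nehari}~(I)). The main obstacles in executing this plan are (i) checking carefully that the straight-line homotopy stays in $\mathcal{O}_\delta$ uniformly for small $\epsilon$, which requires the uniform convergence in \eqref{40}; and (ii) ensuring that the critical points produced by the category argument do lie in $\widetilde{S}_{s,\epsilon}^+$ rather than on its boundary in $S_{s,\epsilon}^+$, which is where the strict inequality $c_0<\tfrac{s}{N}S^{N/(2s)}$ combined with the $(PS)$ condition does the work.
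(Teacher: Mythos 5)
Your proposal follows essentially the same route as the paper's proof: the map $\pi_\epsilon=\overline{\tau}_\epsilon^{-1}\circ H_\epsilon$ into the sublevel set $\widetilde{S}_{s,\epsilon}^+$, the straight-line homotopy making $\beta_\epsilon\circ H_\epsilon$ homotopic to the inclusion $\mathcal O\hookrightarrow\mathcal O_\delta$ (via \eqref{40}), the resulting lower bound on the relative category, Corollary~\ref{spherepscondition} plus Szulkin--Weth-type Lusternik--Schnirelmann theory to produce $\mathrm{cat}_{\mathcal O_\delta}(\mathcal O)$ critical points of $\overline{\Phi}_\epsilon$ on $\widetilde{S}_{s,\epsilon}^+$, pullback through $\overline{\tau}_\epsilon$ by Lemma~\ref{sphere}, and positivity by testing against $(-U^-_\epsilon,-V^-_\epsilon)$ and the singular-integral maximum-principle argument of \eqref{10}. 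The only cosmetic difference is that you phrase the category estimate as $\mathrm{cat}_{\widetilde{S}_{s,\epsilon}^+}(\widetilde{S}_{s,\epsilon}^+)\ge\mathrm{cat}_{\mathcal O_\delta}(\mathcal O)$, which is the cleaner form to feed into the abstract multiplicity theorem; the paper writes it as $\mathrm{cat}_{\pi_\epsilon(\mathcal O)}\pi_\epsilon(\mathcal O)\ge\mathrm{cat}_{\mathcal O_\delta}(\mathcal O)$, but the intent and the subsequent step are identical.
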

\begin{proof}
For any $\epsilon>0,$ define $\pi_\epsilon:\mathcal O\to S_\epsilon^+$ by
\begin{equation*}
\pi_\epsilon(z)=\overline{\tau}_\epsilon^{-1}(H_\epsilon(z)),\quad \forall z\in \mathcal O,
\end{equation*}
where $\overline{\tau}_\epsilon$ is defined in Lemma \ref{Xepsilon}. Moreover, by \eqref{phi} and \eqref{39},
 $$\lim_{\epsilon\to 0}\Phi_\epsilon(\overline{\tau}_\epsilon^{-1}(H_\epsilon(z)))=\lim_{\epsilon\to0}
 I_\epsilon(H_\epsilon(z))=c_0\quad\text{uniformly in~} z\in\mathcal O.$$
 As a consequence, there exists $\hat{\epsilon}>0$ such that for any $\epsilon\in (0,\hat{\epsilon})$,
 $$\pi_\epsilon(\mathcal O)\subset \widetilde{S}_\epsilon^+:=\{(U,V)\in S_\epsilon^+:\Phi_\epsilon(U,V)\leq c_0+h(\epsilon)\}.$$
Consequently, we have
$$\mathcal O \xrightarrow {H_\epsilon}H_\epsilon(\mathcal O)\xrightarrow{\overline{\tau}_\epsilon^{-1}
}\pi_\epsilon(\mathcal O)\xrightarrow {\overline{\tau}_\epsilon}H_\epsilon(\mathcal O)\xrightarrow{\beta_\epsilon}
\mathcal O_\delta. $$
On the other hand,  for any $z\in\mathcal O$ and small $\epsilon$,
 there exists  $\theta(\epsilon,z)\in\mathbb R^N$
 with $|\theta(\epsilon,z)|<\frac{\delta}{2}$ such that $\beta_\epsilon(H_\epsilon(z))=z+\theta(\epsilon,z)$ thanks to \eqref{40}.
 Define $Q:[0,1]\times\mathcal O\to \mathcal O_\delta$ by
  $$Q(t,z)=z+(1-t)\theta(\epsilon,z),$$
  then $Q$ is continuous. Obviously, $Q(0,z)=\beta_\epsilon(H_\epsilon(z)),$ $Q(1,z)=z$ for all $z\in\mathcal O.$ Therefore,
  $Q$ is a homotopy between $\beta_\epsilon\circ H_\epsilon=\beta_\epsilon\circ \tau_\epsilon\circ\pi_\epsilon$
  and the inclusion map $Id:\mathcal O\to \mathcal O_\delta.$ As a result,
  $$cat_{\pi_\epsilon(\mathcal O)}\pi_\epsilon(\mathcal O)\geq cat_{\mathcal O_\delta}(\mathcal O).$$
  It follows from Corollary \ref{spherepscondition} and category theory (see \cite[Corollary 28]{szukin-weth}) that $\overline{\Phi}_\epsilon$ has at least
  $cat_{\pi_\epsilon(\mathcal O)}\pi_\epsilon(\mathcal O)$  critical points on
  $\widetilde {S}_\epsilon^+:=\{(U,V)\in S_\epsilon^+:\overline{\Phi}_\epsilon(U,V)\leq c_0+h(\epsilon)\}.$ In view of Lemma \ref{sphere},
  $I_\epsilon$ admits at least $cat_{\mathcal O_\delta}(\mathcal O)$ critical points in $\widetilde{N}_\epsilon.$
  Namely, system \eqref{mp} has at least $cat_{\mathcal O_\delta}(\mathcal O)$ solutions.
  By a similar argument to the proof of Theorem \ref{energybound} and \eqref{12},
  we conclude that the components of these solutions are positive.
  The proof is complete.
\end{proof}

\section{Proofs of main theorems }

This section is devoted to the proofs of main theorems. To prove the existence and multiplicity of positive vector solutions of system \eqref{op},
we only need to show that the solutions of system \eqref{mp} obtained  in Theorem \ref{multiplicity} and Lemma \ref{energybound} also solve system \eqref{ep} for any small $\epsilon$.
Then we further investigate the decay estimate and concentration property of positive vector solutions of system \eqref{op}.

\begin{lemma}\label{uniformlybound}
Assume  (P1)-(P3) and $\lambda<\min\{\sqrt{a_0b_0},\sqrt{(a_1-\mu_0)b_0}\}.$
 Let $({U}_\epsilon,{V}_\epsilon)$ be the positive vector solution of system \eqref{mp} obtained in Theorem \ref{multiplicity} or Lemma \ref{energybound} for any small $\epsilon$.
Then there exists a positive constant $C$ independent of $\epsilon$ such that
for any $(\widetilde U_\epsilon,\widetilde V_\epsilon )$ defined in \eqref{tildeu}, there holds
$$||\tilde{u}_\epsilon+\tilde{v}_\epsilon||_\infty\leq C.$$
\end{lemma}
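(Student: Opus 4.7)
The plan is to apply a Moser-type iteration to the extension system satisfied by the rescaled pair $(\widetilde U_\epsilon, \widetilde V_\epsilon)$, treating the critical $v$-equation separately from the subcritical $u$-equation. The key inputs are: uniform boundedness of $\{(\widetilde U_\epsilon,\widetilde V_\epsilon)\}$ in $\chi_s\times\chi_s$, which follows from $(U_\epsilon,V_\epsilon)\in N_\epsilon$ with $I_\epsilon(U_\epsilon,V_\epsilon)\to c_0$ via the argument in the proof of Lemma~\ref{psbound}(I) combined with the translation invariance of the $\chi_s$-norm; and the uniform equi-integrability of $\tilde v_\epsilon^{2_s^*}$ on $\mathbb R^N$, which is a consequence of the strong $L^{2_s^*}$ convergence supplied by Lemma~\ref{30}. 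In particular, both $\tilde u_\epsilon$ and $\tilde v_\epsilon$ are uniformly bounded in $L^{2_s^*}(\mathbb R^N)$.

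For the critical $v$-equation I would set $\tilde v_{\epsilon,L}:=\min\{\tilde v_\epsilon^+,L\}$ and, denoting by $\widetilde V_{\epsilon,L}$ the corresponding truncation of $\widetilde V_\epsilon^+$, test the extension equation with $\Psi_L = \widetilde V_\epsilon\,\widetilde V_{\epsilon,L}^{2\beta}$. Using the chain rule in $\chi_s$, the trace inequality in Lemma~\ref{59}(II), the pointwise bound $g_\epsilon(\cdot,v)\leq v^{2_s^*-1}$, and Young's inequality on the cross term $\lambda\tilde u_\epsilon\tilde v_\epsilon\widetilde V_{\epsilon,L}^{2\beta}$, one arrives at an iteration inequality of the form
\begin{equation*}
S\Bigl(\int_{\mathbb R^N}(\tilde v_{\epsilon,L}^{\beta}\tilde v_\epsilon)^{2_s^*}\,dx\Bigr)^{2/2_s^*}
\leq C(\beta+1)^2\int_{\mathbb R^N}\bigl(\tilde v_\epsilon^{2_s^*-2}+1+\tilde u_\epsilon^2\bigr)\tilde v_{\epsilon,L}^{2\beta}\tilde v_\epsilon^2\,dx.
\end{equation*}
Splitting $\mathbb R^N=\{\tilde v_\epsilon\leq K\}\cup\{\tilde v_\epsilon>K\}$ and using the equi-integrability to make $\int_{\{\tilde v_\epsilon>K\}}\tilde v_\epsilon^{2_s^*}\,dx$ as small as desired, uniformly in $\epsilon$, one absorbs the critical tail into the left-hand side. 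Letting $L\to\infty$ then produces uniform $L^{q_1}(\mathbb R^N)$ control of $\tilde v_\epsilon$ for some $q_1>2_s^*$; iterating yields uniform bounds in every $L^q$, $q<\infty$, and a final standard Moser step gives $\|\tilde v_\epsilon\|_\infty\leq C$.

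Once $\tilde v_\epsilon$ is uniformly bounded in $L^\infty$, the $u$-equation has a subcritical right-hand side controlled by $f_\epsilon(\cdot,\tilde u_\epsilon)+\lambda\tilde v_\epsilon\leq \tilde u_\epsilon^p+C$ with $p<2_s^*-1$, so a direct Moser iteration (without any critical tail to absorb) yields $\|\tilde u_\epsilon\|_\infty\leq C$, and adding the two bounds completes the proof. The main obstacle is the first iteration step for the $v$-equation: the critical growth prevents a naive absorption, and the argument closes only thanks to the equi-integrability of $\tilde v_\epsilon^{2_s^*}$ provided by Lemma~\ref{30}. Ensuring that all constants in the iteration are independent of $\epsilon$ — despite the $\epsilon$-dependent potentials and the truncations built into $f_\epsilon$, $g_\epsilon$ — is the most delicate bookkeeping in the argument.
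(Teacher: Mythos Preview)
Your proposal is correct, but the paper takes a shorter path. Rather than iterating on the two equations separately, the authors set $\widetilde W_\epsilon=\widetilde U_\epsilon+\widetilde V_\epsilon$ and observe that, since $f_\epsilon(\cdot,\tilde u_\epsilon)+g_\epsilon(\cdot,\tilde v_\epsilon)+\lambda\tilde w_\epsilon\le C(1+\tilde w_\epsilon^{2_s^*-1})$, the sum $\widetilde W_\epsilon$ is a subsolution of a \emph{single} scalar extension problem with critical right-hand side; they then invoke the Moser iteration for that scalar problem (citing \cite[Lemma~4.1]{he-zou}) and are done. The advantage of their reduction is that the linear coupling $\lambda\tilde u_\epsilon$, $\lambda\tilde v_\epsilon$ is absorbed into $\tilde w_\epsilon$ itself, so no cross term ever appears in the iteration. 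In your route the cross term $\lambda\int\tilde u_\epsilon\,\tilde v_\epsilon\,\tilde v_{\epsilon,L}^{2\beta}$ survives and has to be handled alongside the critical piece; this is doable (after Young's inequality the resulting factor $\tilde u_\epsilon^2$ lies in $L^{N/(2s)}$ uniformly, the same space as $\tilde v_\epsilon^{2_s^*-2}$, so the same tail-absorption works once you also use the equi-integrability of $\tilde u_\epsilon^{2_s^*}$ from Lemma~\ref{30}), but note that your displayed iteration inequality is slightly off: the coupling contributes $\int \tilde u_\epsilon^2\,\tilde v_{\epsilon,L}^{2\beta}$, not $\int \tilde u_\epsilon^2\,\tilde v_{\epsilon,L}^{2\beta}\tilde v_\epsilon^{2}$. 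What your approach buys is that it is self-contained and makes transparent exactly where the strong convergence of Lemma~\ref{30} enters (to close the first step of the critical iteration uniformly in $\epsilon$); the paper's version hides this inside the cited lemma but gets to the conclusion with essentially no computation.
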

\begin{proof}
 Let $\widetilde{W}_\epsilon=\widetilde{U}_\epsilon+\widetilde{V}_\epsilon$ in $\mathbb R^{N+1}_+,$
 then it follows from \eqref{fepsilon} and \eqref{gepsilon} that
$$f_{\epsilon}(x+z_\epsilon,\tilde{u}_\epsilon)+g_\epsilon(x+z_\epsilon,\tilde{v}_\epsilon)+\lambda \tilde{w}_\epsilon
\leq \tilde{u}_\epsilon^p+\tilde{v}_\epsilon^{2_s^*-1}+\lambda \tilde{w}_\epsilon\leq C(1+\tilde{w}_\epsilon^{2_s^*-1}),$$
where $C$ is a positive constant independent of $\epsilon.$
Furthermore, we can verify from \eqref{mp} and \eqref{tildeu} that $\widetilde{W}_\epsilon$ is a subsolution  of
\begin{equation}\label{29}
\begin{cases}
-{\rm div} (y^{1-2s}\nabla \widetilde{W}_\epsilon)=0\quad&\text{in~} \mathbb R^{N+1}_+,\\
-\mathop{\lim}\limits_{y\to0}y^{1-2s}\frac{\partial \widetilde{W}_\epsilon(x,y)}{\partial y}=C(1+\tilde{w}_\epsilon^{2_s^*-1})
&\text{on~} \mathbb R^{N}.
\end{cases}
\end{equation}
Then by the Moser's iteration  for scalar equations (see, e.g., \cite[Lemma 4.1]{he-zou}), we can complete the proof.
\end{proof}

\begin{proof}[\bf Proofs of Theorem \ref{53} and \ref{mul}]
To verify the existence and multiplicity of positive vector solutions of system \eqref{op}, we only need to show that
there exists $\epsilon_0$ such that any positive vector solution $({U}_\epsilon,{V}_\epsilon)$ of system \eqref{mp} obtained in Theorem \ref{multiplicity} and Lemma \ref{energybound} also solves system \eqref{ep} for any $0<\epsilon<\epsilon_0.$
Set $\widetilde{W}_\epsilon=\widetilde{U}_\epsilon+\widetilde{V}_\epsilon$,
where $(\widetilde{U}_\epsilon,\widetilde{V}_\epsilon)$ is defined in \eqref{tildeu}.
Then there exists $\widetilde{W}\in\mathcal E$ such that $\widetilde{W}_\epsilon\to \widetilde{W}$  as $\epsilon\to 0$ due to Lemma \ref{30}, and
 $||\tilde{w}_\epsilon||_\infty<C$ for some constant $C>0$ independent of $\epsilon.$
Moreover, $\widetilde{W}_\epsilon\in C(\overline{\mathbb R^{N+1}_+})$ by \cite[Corollary 2.1]{Jin-Li-Xiong}, and  $\widetilde{W}_\epsilon$ is a subsolution  of
\begin{equation*}
\begin{cases}
-{\rm div} (y^{1-2s}\nabla \widetilde{W}_\epsilon)=0\quad&\text{in~} \mathbb R^{N+1}_+,\\
-\mathop{\lim}\limits_{y\to0}y^{1-2s}\frac{\partial \widetilde{W}_\epsilon(x,y)}{\partial y}=\lambda \tilde{w}_\epsilon+C\tilde{w}_\epsilon^{\frac{4s}{N+2s}} &\text{on} ~\mathbb R^N.
\end{cases}
\end{equation*}
It follows from \cite[Proposition 2.4]{Jin-Li-Xiong} that for any $x\in\mathbb R^N,$
\begin{equation}\label{31}
\begin{aligned}
\sup_{B_1(x)\times [0,1) }\widetilde{W}_\epsilon&\leq\left(\int_{B_2(x)\times (0,2)}y^{1-2s}|\widetilde{W}_\epsilon|^{2\gamma}dxdy\right)^{\frac{1}{2\gamma}}+
C\left(\int_{B_2(x)}\tilde{w}_\epsilon^2\right)^{\frac{2s}{N+2s}}\\
&\leq\left(\int_{B_2(x)\times (0,2)}y^{1-2s}\left|\widetilde{W}_\epsilon-\widetilde{W}\right|^{2\gamma}dxdy\right)^{\frac{1}{2\gamma}}
+\left(\int_{B_2(x)\times (0,2)}y^{1-2s}|\widetilde{W}|^{2\gamma}dxdy
\right)^{\frac{1}{2\gamma}}\\
&~~~~+C\left(\int_{B_2(x)}(\tilde{w}_\epsilon-\tilde{w})^2dx+\int_{B_2(x)}\tilde{w}^2dx\right)^{\frac{2s}{N+2s}}.
\end{aligned}
\end{equation}
Since $\widetilde{W}_\epsilon\to\widetilde{W}$ in $\mathcal E,$  for any $\theta>0$ there exist $\epsilon_0>0$ and $R>0$ such that  for any $0<\epsilon<\epsilon_0$,
$$\int_{\mathbb R^{N+1}_+}y^{1-2s}\left|\widetilde{W}_\epsilon-\widetilde{W}\right|^{2\gamma}dxdy\leq\theta,\quad
\int_{\mathbb R^N}(\tilde{w}_\epsilon-\tilde{w})^2dx\leq\theta,$$
$$\int_{\mathbb R^{N+1}_+\setminus\mathcal B^+_{R}}y^{1-2s}|\widetilde{W}|^{2\gamma}dxdy\leq\theta,\quad
\int_{\mathbb R^N\setminus B_R}\tilde{w}^2dx\leq\theta,$$
which along with \eqref{31}  implies that
\begin{equation}\label{34}
\lim_{|x|\to\infty}\tilde{w}_\epsilon(x)=0\quad \text{uniformly in~} \epsilon.
\end{equation}
In other words, there exists $R_\alpha>0$ such that  $\tilde{w}_\epsilon<\alpha$
for any $\epsilon\in(0,\epsilon_0)$
and $|x|>R_\alpha.$ Consequently,
\begin{equation}\label{32}
u_\epsilon(x)+v_\epsilon(x)=\tilde{w}_\epsilon(x-z_\epsilon)<\alpha\quad\text{for any }x\in\mathbb R^N\setminus B_{{R_\alpha}}(z_\epsilon).
\end{equation}
On the other hand, since $\epsilon z_\epsilon\to z_0\in\mathcal O$ as $\epsilon\to0,$
there exists $r>0$ such that $B_r(\epsilon z_\epsilon)\subset\subset \Lambda$ for small $\epsilon.$
Namely, $\Lambda_\epsilon^c\subset\subset \mathbb R^N\setminus B_{\frac{r}{\epsilon}}(z_\epsilon)$
for small $\epsilon.$ Let $\epsilon$ be sufficiently small such that $\epsilon R_\alpha<r$, then
$\Lambda_\epsilon^c\subset\subset\mathbb R^N\setminus B_{R_\alpha}(z_\epsilon).$
It then follows from \eqref{32} that
\begin{equation}\label{33}
 ||u_\epsilon+v_\epsilon||_{L^\infty(\mathbb R^N\setminus \Lambda_\epsilon)}<\alpha
\quad\text{for small~} \epsilon.
\end{equation}
In terms of \eqref{fepsilon} and \eqref{gepsilon}, we have
$$f_\epsilon(x,{u}_\epsilon)={u}_\epsilon^{p},\quad
g_\epsilon(x,{v}_\epsilon)={v}_\epsilon^{2_s^*-1}\quad \text{for all~} x\in \mathbb R^N
\text{~and small~}\epsilon.$$
That is to say, $({U}_\epsilon,{V}_\epsilon)$ is a solution of system \eqref{ep}.

We  claim that $||w_\epsilon||_{L^\infty(\mathbb R^N)}\geq\alpha$ for small $\epsilon$.
Otherwise, by \eqref{fepsilon} and \eqref{gepsilon}, we have
$$f_\epsilon(x,u_\epsilon)\leq \alpha^{p-1}u_\epsilon,\quad g_\epsilon(x,v_\epsilon)\leq \alpha^{2_s^*-2}v_\epsilon,\quad
\forall x\in \mathbb R^N. $$
Furthermore, it follows from $I_\epsilon^\prime(U_\epsilon,V_\epsilon)(U_\epsilon,V_\epsilon)=0$ and \eqref{delta0}
that $||(U_\epsilon,V_\epsilon)||_{X_{s,\epsilon}}=0,$ which contradicts (I) in Lemma \ref{nehari}.
The claim holds.
Therefore, there exists $x_\epsilon\in\Lambda_\epsilon$ such that
$$w_\epsilon(x_\epsilon)=\max_{x\in\mathbb R^N} w_\epsilon(x).$$
By a direct calculation we see that $(\hat{u}_\epsilon(x),\hat{v}_\epsilon(x)):=({u}_\epsilon(\frac{x}{\epsilon}),{v}_\epsilon(\frac{x}{\epsilon}))$
in $\mathbb R^N$  is a positive vector solution of system \eqref{op}. Moreover, $\hat{u}_\epsilon+\hat{v}_\epsilon$ achieves its maximum at $\epsilon x_\epsilon.$
  Next, we prove $\epsilon x_\epsilon\to z_0$ as $\epsilon\to 0.$
In fact,
 $\epsilon x_\epsilon\in B_{\epsilon R_\alpha}(\epsilon z_\epsilon)$ for small $\epsilon$ due to \eqref{32}, which along with
Lemma \ref{30} implies
$\epsilon x_\epsilon\to z_0$
as $\epsilon\to0,$ and then the property (II) in Theorems is a direct result of Lemma \ref{30}.

Thanks to $\lambda<(1-\delta_0)\sqrt{a_0b_0},$ we have
$$\frac{\lambda}{(1-\delta_0)a_0}<\frac{(1-\delta_0)b_0}{\lambda}.$$
Let $\theta=\frac{1}{2}\left(\frac{\lambda}{(1-\delta_0)a_0}+\frac{(1-\delta_0)b_0}{\lambda}\right)$
and $\overline{W}_\epsilon=\theta \widetilde{U}_\epsilon+\widetilde{ V}_\epsilon$. Then
\begin{equation}\label{35}
\begin{aligned}
(-\triangle)^s\overline{w}_\epsilon&\leq (\lambda-a_0\theta)\tilde{u}_\epsilon+(\lambda\theta-b_0)\tilde{v}_\epsilon+\theta \tilde{u}_\epsilon^{p}+\tilde{v}_\epsilon^{2_s^*-1}\\
&\leq -\delta_0a_0\theta \tilde{u}_\epsilon-\delta_0b_0 \tilde{v}_\epsilon+\theta \tilde{u}_\epsilon^{p}+\tilde{v}_\epsilon^{2_s^*-1}\\
&\leq-\bar{\theta}\overline{w}_\epsilon +\theta^{1-p} \overline{w}_\epsilon^{p}+\overline{w}_\epsilon^{2_s^*-1},
\end{aligned}
\end{equation}
where $\bar{\theta}=\min\{\delta_0a_0\theta,\delta_0b_0\}.$ By virtue of \eqref{34}, there exists $R>0$ such that
for any $|x|>R$ and small $\epsilon$ there holds
$$-\bar{\theta}\overline{w}_\epsilon +\theta^{1-p} \overline{w}_\epsilon^{p}+\overline{w}_\epsilon^{2_s^*-1}
\leq-\frac{\bar{\theta}}{2}\overline{w}_\epsilon,$$
which together with \eqref{35} yields
\begin{equation*}
(-\triangle)^s \overline{w}_\epsilon+\frac{\bar{\theta}}{2}\overline{w}_\epsilon\leq0,\quad \forall |x|>R.
\end{equation*}
Noting from \cite[Lemma 4.3]{Felmer-Quaas-Tan} that there is a continuous function $w$  such that
\begin{equation*}
(-\triangle)^s w+\frac{1}{2}w=0,\quad \forall |x|>1,
\end{equation*}
and $0<w<\frac{C_2}{|x|^{N+2s}}$ for some $C_2>0.$ Now we set $\phi(x)=w\left(\bar{\theta}^{\frac{1}{2s}}x\right)$ in $\mathbb R^N,$ then
\begin{equation*}
(-\triangle)^s \phi+\frac{\bar{\theta}}{2}\phi=0,\quad \forall |x|>\bar{\theta}^{-\frac{1}{2s}},
\end{equation*}
and $0<\phi<\frac{C^\prime}{|x|^{N+2s}}$ with $C^\prime>0.$ Define
$$R^\prime=\max\{\bar{\theta}^{-\frac{1}{2s}},R\},\quad
\alpha =\sup_{\epsilon}||\overline{w}_\epsilon||_\infty <\infty, \quad\beta=\inf_{B_{R^\prime}}\phi(x)>0$$
and $\psi=\alpha \phi-\beta\overline{w}_\epsilon$ in $\mathbb R^N,$ then
\begin{equation}\label{36}
\begin{cases}
(-\triangle)^s \psi+\frac{\bar{\theta}}{2}\psi\geq0\quad &\text{in~}\mathbb R^N\setminus B_{R^\prime},\\
\psi\geq0&\text{in~} B_{R^\prime}.
\end{cases}
\end{equation}
We claim that $\psi\geq0$ in $\mathbb R^N.$ Otherwise, there exists $\{x_n\}\subset\mathbb R^N$ such that
$$\lim_{n\to\infty}\psi(x_n)=\inf_{x\in\mathbb R^N}\psi(x)<0.$$
Since $\psi(x)\to 0$ as $|x|\to\infty$, $\{x_n\}$ is bounded. Hence there exists $x_0\in\mathbb R^N$
such that, up to a subsequence, $x_n\to x_0.$ It then follows  that
$$\psi(x_0)=\inf_{x\in\mathbb R^N}\psi(x)<0,$$ which along with \eqref{36} implies that $x_0\in\mathbb R^N\setminus B_{R^\prime}.$ On the other hand, we see that
$$(-\triangle)^s\psi(x_0)=C_{N,s}PV\int_{\mathbb R^N}\frac{\psi(x_0)-\psi(y)}{|x-y|^{N+2s}}dy\leq0.$$
As a result, we have
$$(-\triangle)^s \psi(x_0)+\frac{\bar{\theta}}{2}\psi(x_0)<0,$$
which contradicts \eqref{36}. Therefore $\psi\geq 0$ in $\mathbb R^N$. In other words, there holds
$$\tilde{u}_\epsilon(x)+\tilde{v}_\epsilon(x)\leq \frac{C}{1+|x|^{N+2s}}$$
for some positive constant $C$ independent of $\epsilon.$ As a consequence,
\begin{equation*}
\begin{aligned}
\hat{u}_\epsilon(x)+\hat{v}_\epsilon(x)&=u_\epsilon\left(\frac{x}{\epsilon}\right)+v_\epsilon\left(\frac{x}{\epsilon}\right)\\
&=\bar{u}_\epsilon\left(\frac{x}{\epsilon}-z_\epsilon\right)+\bar{v}_\epsilon\left(\frac{x}{\epsilon}-z_\epsilon\right)\\
&\leq \frac{C\epsilon^{N+2s}}{\epsilon^{N+2s}+|x-\epsilon z_\epsilon|^{N+2s}}\\
&\leq\frac{C\epsilon^{N+2s}}{\epsilon^{N+2s}+|x-\epsilon x_\epsilon|^{N+2s}},
\end{aligned}
\end{equation*}
where the last inequality is obtained by the fact that $\epsilon z_\epsilon\to z_0$ and $\epsilon x_\epsilon\to z_0$  as
$\epsilon\to 0.$ The proof is complete.
\end{proof}

\section*{Acknowledgements}
The first author  would like to thank the China Scholarship Council of China (201706180064) for financial support during the period of his overseas study and to express his gratitude to   the Department of Mathematical Sciences in
Yeshiva University for its kind hospitality. The first author  is also partially supported by the Fundamental Research Funds for the Central Universities (lzujbky-2017-it53) and the second author is partially supported by the National Natural Science Foundations of China (No.11471147).


\begin{thebibliography}{99}
\addtolength{\itemsep}{-0.5ex}

\bibitem{Ambrosetti-Colorado- Ruiz2007}
A. Ambrosetti, E. Colorado, D. Ruiz, Multi-bump solitons to linearly coupled systems of nonlinear Schr\"{o}dinger
equations.  Calc. Var. Partial Differential Equations 30 (2007) 85-112.
\bibitem{Ambrosetti-Cerami-Ruiz2008}
A. Ambrosetti, G. Cerami, D. Ruiz, Solitons of linearly coupled systems of semilinear non-autonomous equations
on $\mathbb R^N$.  J. Funct. Anal. 254 (2008) 2816-2845.


\bibitem{Alves-Miyagaki}
C.O. Alves,  O. H. Miyagaki, Existence and concentration of solution for a class of fractional elliptic equation in $\mathbb{R}^N$ via penalization method.  Calc. Var. Partial Differential Equations 55 (2016) Art. 47, 19 pp.
\bibitem{ab}
G. Alberti,  G. Bellettini, A nonlocal anisotropicmodel for phase transitions I: the optimal profile problem.
Math. Ann. 310 (1998), 527-560.

\bibitem{Bhattarai}
S. Bhattarai,  On fractional Schr\"{o}dinger systems of Choquard type. J. Differential Equations 263 (2017) 3197-3229.


\bibitem{Barrios-Colorado-Pabl-Sanchezo}
B. Barrios, E.  Colorado, A. de Pablo, U. S\'{a}nchez,  On some critical problems for the fractional Laplacian operator. J.  Differential Equations,  252 (2012) 6133-6162.


\bibitem{Barrios- Colorado-Servadei-Soria}
B. Barrios, E. Colorado, R. Servadei, F. Soria,  A critical fractional equation with concave-convex power
nonlinearities. Ann. Inst. H. Poincar\'{e} Anal. Non Lin\'{e}aire 32, (2015) 875-900.
\bibitem{Barrios-Colorado-Pablo}
C. Br\"andle,  E. Colorado,   A. de Pablo,  U. S\'{a}nchez, A concave-convex elliptic problem involving the
fractional Laplacian. Proc. Roy. Soc. Edinburgh Sect. A 143 (2013) 39-71.



\bibitem{bgr}
K. Bogdan, T. Grzywny,  M. Ryznar, Heat kernel estimates for the
fractional Laplacian with Dirichlet conditions. Ann. of Prob. 38 (2010)
1901-1923.
\bibitem{Berestycki-Lions}
H.Berestycki, P.L.Lions, Nonlinear scalar field equations. I: Existence of a ground state, Arch. Ration. Mech. Anal. 82 (1983) 313-346;
II: Existence of infinitely many solutions, Arch. Ration. Mech. Anal. 82(1983)347-376.
\bibitem{Byeon-Jeanjean}
J. Byeon, L. Jeanjean, Standing waves for nonlinear Schr\"{o}dinger equations with a general nonlinearity.
Arch. Ration. Mech. Anal. 185 (2007) 185-200.
\bibitem{Byeon-Wang2002}
J. Byeon, Z. Wang, Standing waves with a critical frequency for nonlinear Schr\"{o}dinger equations.
Arch. Ration. Mech. Anal. 165 (2002) 295-316.
\bibitem{Byeon-Wang2003}
J. Byeon, Z. Wang, Standing waves with a critical frequency for nonlinear Schr\"{o}dinger quations II.
Calc. Var. Partial Differential Equations 18 (2003) 207-219.
\bibitem{chen-zou2012}
Z. Chen, W. Zou, Ground states for a system of Schr\"{o}dinger equations with critical exponent. J. Funct. Anal. 262 (2012) 3091-3107.
\bibitem{chen-zou2014}
Z. Chen, W. Zou, Standing waves for linearly coupled Schr\"{o}dinger equations with critical exponent.
 Ann. Inst. H. Poincar\'{e} Anal. Non Lin\'{e}aire, 31 (2014) 429-447
\bibitem{Coffman}
C. V. Coffman, Uniqueness of the ground state solution for $\triangle u-u+u^3=0$ and a variational characterization of other solutions. Arch. Ration. Mech. Anal. 46 (1972) 81-95.
\bibitem{chenliliyan1}
 W. Chen,  C. Li,  Y. Li,  A direct method of moving planes for the fractional Laplacian. Adv. Math. 308 (2017), 404-437.
\bibitem{cfy}
 W. Chen, Y. Fang, R. Yang, Liouville theorems involving the fractional Laplacian
on a half space. Adv. Math.  274 (2015), 167-198.
\bibitem{CLM}
 W. Chen, Y. Li,  P. Ma,  The Fractional Laplacian.  a book to be published by World Scientific Publishing Co. 2017.
\bibitem{CQ}
 W. Chen, S. Qi, Direct methods on fractioanl Laplacian. doi:10.3934/dcds.2019055.


 \bibitem{cs}
  L. Caffarelli, L. Silvestre, An extension problem related to the fractional Laplacian. Comm. Partial Differential Equations 32 (2007) 1245-1260.
\bibitem{DelPino-Felmer1996}
M. del Pino, P. Felmer, Local mountain passes for semilinear elliptic problems in unbounded domains.
Calc. Var. Partial Differential Equations 4 (1996) 121-137.
\bibitem{DelPino-Felmer1997}
M. del Pino, P. Felmer, Semiclassical states for nonlinear Schr\"{o}dinger equations. J. Funct. Anal. 149 (1997) 245-265.
\bibitem{Davila-DelPino-Wei}
J. D\'{a}vila, M. del Pino, J. Wei, Concentrating standing waves for the fractional nonlinear Schr\"{o}dinger
equation. J. Differential Equations, 256 (2014) 858-892.

\bibitem{Dipierro-Medina-Valdinoci}
 S. Dipierro, M. Medina, E. Valdinoci,  Fractional elliptic problems with critical growth in the hole of
$\mathbb R^N.$ (2015) arXiv:1506.01748vl.
\bibitem{do-Miyagaki-Squassina}
J. M. do \'{O}, O. H. Miyagaki, M. Squassina,  Critical and subcritical fractional problems with vanishing potentials. Commun. Contemp. Math. 18 (2016) 20 pp.

\bibitem{Fall-Mahmoudi-Valdinoci}
M. M. Fall,  F. Mahmoudi ,  E. Valdinoci, Ground states and concentration phenomena for the fractional Schr\"{o}dinger equation. Nonlinearity, 28 (2015)  1937-1961.

\bibitem{Felmer-Quaas-Tan}
P. Felmer, A. Quaas,  J. Tan,
Positive solutions of the nonlinear Schr\"{o}dinger equation with the fractional Laplacian.
Proc. Roy. Soc. Edinburgh Sect. A 142 (2012) 1237-1262.
\bibitem{Jin-Li-Xiong}
T. Jin, Y.Y. Li, J. Xiong,  On a fractional Nirenberg problem, part I: blow up analysis and compactness
of solutions. J. Eur. Math. Soc. 16 (2014) 1111-1171.

\bibitem{he-zou}
X. He,   W. Zou, Existence and concentration result for the fractional Schr\"{o}dinger equations with critical nonlinearities. Calc. Var. Partial Differential Equations, 55 (2016) Art. 91, 39 pp.
\bibitem{guo-he}
Q. Guo, X. He, Semiclassical states for weakly coupled fractional Schr\"{o}dinger systems. J.  Differential Equations, 263 (2017) 1986-2023.

\bibitem{Kwasnicki}
M. Kwa\'{s}nicki. Ten equivalent definitions of the fractional laplace operator. Fract. Calc. Appl. Anal. 20 (2017) 7-51.

\bibitem{Montefusco-Pellacci-Squassina}
E. Montefusco, B. Pellacci, M. Squassina, Semiclassical states for weakly coupled nonlinear Schr\"{o}dinger systems.
J. Eur. Math. Soc. 10 (2008) 47-71.

\bibitem{Pomponio}
A. Pomponio, Coupled nonlinear Schr\"{o}dinger systems with potentials. J. Differential Equations 227 (2006) 258-281.

\bibitem{Strauss}
W. A. Strauss, Existence of solitary waves in higher dimensions.  Commun. Math. Phys. 55 (1977) 149-162.

\bibitem{szukin-weth}
A. Szulkin, T. Weth,  The method of Nehari manifold. Handbook of nonconvex analysis and applications. 597-632, Int. Press, Somerville, MA, 2010.

\bibitem{sirevaldinoci}
Y. Sire, E. Valdinoci,  Fractional Laplacian phase transitions and boundary reactions: a geometric inequality
and a symmetry result. J. Funct. Anal. 256 (2009), 1842-1864.

\bibitem{Shang-Zhang}
X.  Shang, J.  Zhang, Concentrating solutions of nonlinear fractional Schr\"{o}dinger equation with
potentials. J. Differential Equations, 258,(2015), 1106-1128.

\bibitem{Teng}
K. Teng,  Existence of ground state solutions for the nonlinear fractional Schr\"{o}dinger-Poisson system
with critical Sobolev exponent.  J. Differential Equations,  261 (2016) 3061-3106.


\bibitem{valdinoci}
E. Valdinoci,  From the long jump random walk to the fractional Laplacian. Bol. Soc. Esp. Mat. Apl. Se
MA 49 (2009), 33-44.
\bibitem{Yu-Zhao-Zhao}
Y. Yu, F. Zhao, L. Zhao, The concentration behavior of ground state solutions for a fractional Schr\"{o}dinger-Poisson system.  Calc. Var. Partial Differential Equations 56 (2017) Art. 116, 25 pp.

\bibitem{Zhang-do-Squassina}
J. Zhang, J. M. do \'{O}, M. Squassina,  Fractional Schr\"{o}dinger-Poisson systems with a general subcritical
or critical nonlinearity. Adv. Nonlinear Stud. 16 (2016) 15-30.
\end{thebibliography}
\end{document}